\newcommand{\nref}[1]{\hyperref[#1]{\ref*{#1}}}
\theoremstyle{plain}
\newtheorem{theorem}{Theorem}[section]
\newtheorem{corollary}[theorem]{Corollary}
\newtheorem{definition}[theorem]{Definition}
\newtheorem{example}[theorem]{Example}
\newtheorem{lemma}[theorem]{Lemma}
\newtheorem{proposition}[theorem]{Proposition}
\newtheorem{remark}[theorem]{Remark}
\newcommand{\hamburger}[4] 
{
  \thispagestyle{empty}
  \vspace*{-2cm}
  \begin{flushright}
    ZMP-HH 14/ #2 \\
    Hamburger Beitr\"age zur Mathematik Nr. #3 \\
    #4 \\
  \end{flushright}
  \vspace{0.5cm}
  \begin{center}
    \Large \bf
    #1
  \end{center}
  \vspace{0.5cm}
  \begin{center}        
    Alexander Barvels, Simon Lentner, Christoph Schweigert \\
    Fachbereich Mathematik, Universit\"at Hamburg \\
    Bereich Algebra und Zahlentheorie \\
    Bundesstra\ss e 55, D-20146 Hamburg \\
  \end{center}
  \vspace{0.5cm}
}
\newcommand{\id}{{\rm id}}
\newcommand{\Id}{{\rm Id}}
\newcommand{\eps}{\varepsilon}
\newcommand{\ad}{{\rm ad}}
\newcommand{\cA}{\mathcal{A}}
\newcommand{\cB}{\mathcal{B}}
\newcommand{\cC}{\mathcal{C}}
\newcommand{\cD}{\mathcal{D}}
\newcommand{\cZ}{\mathcal{Z}}
\newcommand{\fg}{\mathfrak{g}}
\renewcommand{\C}{\mathbb{C}}
\newcommand{\Z}{\mathbb{Z}}
\newcommand{\unit}{\mathbf{1}}
\newcommand{\sD}{\mathsf{D}}
\newcommand{\sS}{\mathsf{S}}
\newcommand{\sT}{\mathsf{T}}
\newcommand{\cop}{{{\rm cop}}}
\newcommand{\op}{{{\rm op}}}
\newcommand{\ov}[1]{\overline{#1}}
\newcommand{\us}{\underline{\; \;}}
\newcommand{\YD}[1]{{}_{#1}^{#1}\mathcal{YD}}
\newcommand{\rYD}[1]{\mathcal{YD}_{#1}^{#1}}
\newcommand{\lrYD}[2]{{}_{#1}\mathcal{YD}^{#2}}
\newcommand{\rlYD}[2]{{}^{#2}\mathcal{YD}_{#1}}
\newcommand{\lMod}[2]{#1\text{-}\mathrm{mod}_{#2}}
\newcommand{\rMod}[2]{\mathrm{mod}_{#2}\text{-}#1}
\newcommand{\lComod}[2]{#1\text{-}\mathrm{comod}_{#2}}
\newcommand{\rComod}[2]{\mathrm{comod}_{#2}\text{-}#1}
\newcommand{\Bimod}[2]{#1\text{-}{\rm bimod}_{#2}}
\newcommand{\grau}{gray!60}
\newenvironment{grform}{\begin{tikzpicture}[intext]}{\end{tikzpicture}}
\tikzset{
norm/.style = {ultra thick, color = #1},
norm/.default = black,
intext/.style = {baseline = (current bounding box.center)},
}
\newcommand{\vLine}[5]{
  \draw[ultra thick, color = #5, rounded corners] (#1 , #2) -- (#1 , #2 * 0.7 +
#4 * 0.3 ) -- ( #3 , #2 * 0.3 + #4 * 0.7 ) -- (#3 ,#4);
}
\newcommand{\vLineO}[5]{
  \draw[line width = 6pt , color = white, rounded corners] (#1 , #2) -- (#1 , #2
* 0.7 + #4 * 0.3 ) -- ( #3 , #2 * 0.3 + #4 * 0.7 ) -- (#3 ,#4);
  \draw[ultra thick, color = #5, rounded corners] (#1 , #2) -- (#1 , #2 * 0.7 +
#4 * 0.3 ) -- ( #3 , #2 * 0.3 + #4 * 0.7 ) -- (#3 ,#4);
}
\newcommand{\dMult}[5]{
\draw[ultra thick, color = #5] (#1 , #2) -- (#1 , #2 + #4 * 0.2) .. controls (#1
, #2 + #4 * 0.5) .. (#1 + #3 *0.25 , #2 + #4 * 0.5) 
-- (#1 + #3 *0.5 , #2 + #4 * 0.5) -- (#1 + #3 *0.75 , #2 + #4 * 0.5) .. controls
(#1 + #3 , #2 + #4 * 0.5) .. (#1 + #3 , #2 + #4 * 0.2) -- (#1 + #3 , #2);
\draw[ultra thick, color = #5] (#1 + #3 *0.5 , #2 + #4 * 0.5) -- (#1 + #3 *0.5 ,
#2 + #4);
}
\newcommand{\dMultO}[5]{
\draw[line width = 6pt , color = white] (#1 , #2) -- (#1 , #2 + #4 * 0.2) ..
controls (#1 , #2 + #4 * 0.5) .. (#1 + #3 *0.25 , #2 + #4 * 0.5) 
-- (#1 + #3 *0.5 , #2 + #4 * 0.5) -- (#1 + #3 *0.75 , #2 + #4 * 0.5) .. controls
(#1 + #3 , #2 + #4 * 0.5) .. (#1 + #3 , #2 + #4 * 0.2) -- (#1 + #3 , #2);
\draw[ultra thick, color = #5] (#1 , #2) -- (#1 , #2 + #4 * 0.2) .. controls (#1
, #2 + #4 * 0.5) .. (#1 + #3 *0.25 , #2 + #4 * 0.5) 
-- (#1 + #3 *0.5 , #2 + #4 * 0.5) -- (#1 + #3 *0.75 , #2 + #4 * 0.5) .. controls
(#1 + #3 , #2 + #4 * 0.5) .. (#1 + #3 , #2 + #4 * 0.2) -- (#1 + #3 , #2);
\draw[ultra thick, color = #5] (#1 + #3 *0.5 , #2 + #4 * 0.5) -- (#1 + #3 *0.5 ,
#2 + #4);
}
\newcommand{\dUnit}[3]{
\draw[ultra thick, color = #3] (#1 , #2) -- (#1 , #2 - 0.5);
\filldraw[ultra thick, color = #3, fill = white] (#1 , #2 - 0.5) circle [radius
= 6pt];
}
\newcommand{\dCounit}[3]{
\draw[ultra thick, color = #3] (#1 , #2) -- (#1 , #2 + 0.5);
\filldraw[ultra thick, color = #3, fill = white] (#1 , #2 + 0.5) circle [radius
= 6pt];
}
\newcommand{\dAntipode}[3]{
\draw[ultra thick, color = #3] (#1 , #2) -- (#1 , #2 + 1);
\filldraw[ultra thick, color = #3, fill = white] (#1 , #2 + 0.5) circle [radius
= 8pt];
}
\newcommand{\dSkewantipode}[3]{
\draw[ultra thick, color = #3] (#1 , #2) -- (#1 , #2 + 1);
\filldraw[ultra thick, color = #3, fill = gray] (#1 , #2 + 0.5) circle [radius =
8pt];
}
\newcommand{\dAction}[6]{
\draw[ultra thick, color = #5] (#1 , #2) -- (#1 , #2 + #4 * 0.2) .. controls (#1
, #2 + #4 * 0.5) .. (#1 + #3 * 0.8 , #2 + #4 * 0.5) -- (#1 + #3 , #2 + #4 *
0.5);
\draw[ultra thick, color = #6] (#1 + #3 , #2) -- (#1 + #3 , #2 + #4);
}
\newcommand{\dPairing}[6]{
\draw[ultra thick, color = #5] 
(#1 , #2) -- 
(#1 , #2 + #4 - 1) .. controls (#1 , #2 + #4 - 0.5) .. (#1 + #3 * 0.48 , #2 + #4
- 0.5) -- 
(#1 + #3 * 0.52 , #2 + #4 - 0.5) .. controls (#1 + #3 , #2 + #4 - 0.5) .. (#1 +
#3 , #2 + #4 - 1) -- 
(#1 + #3 , #2);
\draw (#1 + #3 * 0.5 , #2 + #4 - 0.5) node [fill = white] {#6};
}
\newcommand{\dCopairing}[6]{
\draw[ultra thick, color = #5] 
(#1 , #2) -- 
(#1 , #2 - #4 + 1) .. controls (#1 , #2 - #4 + 0.5) .. (#1 + #3 * 0.48 , #2 - #4
+ 0.5) -- 
(#1 + #3 * 0.52 , #2 - #4 + 0.5) .. controls (#1 + #3 , #2 - #4 + 0.5) .. (#1 +
#3 , #2 - #4 + 1) -- 
(#1 + #3 , #2);
\draw (#1 + #3 * 0.5 , #2 - #4 + 0.5) node [fill = white]
{#6};
}
\begin{document}

\hamburger{Partially dualized Hopf algebras have equivalent Yetter-Drinfel'd
modules}{05}{505}{February 2014}
\thispagestyle{empty}
\enlargethispage{1cm}

\begin{abstract}
Given a Hopf algebra $H$ and a projection $H\to A$ to a Hopf 
subalgebra, 
we construct a Hopf algebra $r(H)$, called the partial dualization
of $H$, 
with a projection to the Hopf algebra dual to $A$. This 
construction provides powerful techniques in the general 
setting of braided monoidal categories. The construction
comprises in particular the reflections of generalized quantum groups
\cite{HS13}. We prove a braided equivalence between the
Yetter-Drinfel'd modules over a Hopf algebra and its partial
dualization.
\end{abstract}

\makeatletter
\@setabstract
\makeatother

\tableofcontents
\newpage

\section{Introduction and Summary}

\subsection{Introduction}

One of the fundamental observations about a finite-dimensional
Hopf algebra $H$ over a field $\Bbbk$ is the fact that
 the dual vector space $H^*$ has the structure of a
Hopf algebra as well. The Hopf algebras $H$ and $H^*$ are, typically,
rather different.

In this article we consider a partial dualization of
a Hopf algebra $H$ in which the following data $\cA$ enter:
a projection    $\pi:H\rightarrow A$ to a Hopf subalgebra $A$, 
and a Hopf algebra $B$  dual to the subalgebra $A$. 
The duality is expressed  in terms of a non-degenerate Hopf 
pairing $\omega : A \otimes B \to \unit$.  these data, we
construct another Hopf algebra $r_{\cA}(H)$ which has $B$ as a Hopf subalgebra and comes with a projection to $B$. 
We call $r_\cA(H)$ the \emph{partially dualized} Hopf algebra. 
As we will see in Subsection \ref{subsec:examples}, 
such partially dualized Hopf algebras
appear in rather different contexts, including the  
classification of certain pointed Hopf algebras in terms
of their Borel parts, the Nichols algebras
\cite{AS02,Heck09,AHS10}.

The guiding example of Nichols algebras suggests the
following setting for our investigations:
for any braided category $\cC$, there is a natural notion 
of a Hopf algebra in $\cC$. In the present paper, $H,A$ and $B$ will
be  Hopf algebras in a braided category $\cC$. Working with 
Hopf algebras in braided categories 
significantly simplifies the motivating construction for 
generalized quantum groups  \cite{HS13}, since it avoids
explicit calculations with smash products. 

Our second aim is to exhibit a representation-theoretic relation between
the Hopf algebras $H$ and $r_\cA(H)$. Typically
$H$ and $r_\cA(H)$ are not isomorphic, nor even Morita-equivalent. In the
present paper, we show the following, more subtle relation:
    The categories of Yetter-Drinfel'd modules over $H$ and $r_\cA(H)$ are
    equivalent as braided categories. This insight is new, even in the case of
    generalized quantum groups. 

    The equivalence of categories of Yetter-Drinfel'd modules implies
    a relation between 
 the Hopf algebra $H$ and and its partial dualization $r_\cA(H)$ which we discuss
    in the case of a Hopf algebra over a field: the category of Yetter-Drinfel'd
    modules is the Drinfel'd center of the category of modules. It is well-known
    that semisimple algebras with isomorphic centers are Morita equivalent.
    Replace the notion of an algebra by the one of a monoidal category and,
    similarly, the notion of a module over a algebra by the notion of a module
    category over a monoidal category. Then, it is known \cite[Thm 3.1]{ENO11} that
    semisimple tensor categories with braided-equivalent Drinfel'd centers 
    have equivalent bicategories of module categories. This relation has been
    termed weak monoidal Morita equivalence \cite{Mue03}. 
It is therefore tempting to speculate that the bicategories
of module categories over the monoidal categories $\lMod{H}{}$ 
and $\lMod{r_\cA(H)}{}$ are closely related, if not equivalent.

\subsection{Examples}\label{subsec:examples}
We discuss several examples of partial dualizations and
relate them to known results in the literature; all examples
will be discussed in more detail in section  \nref{sec:Examples}.
The two
extremal cases of dualizations are trivial:

\begin{itemize}
\item Taking $\pi:A\stackrel\sim\to H$ yields a complete dualization: $r_\cA(H)=B$.
\item 
For the projection $\pi:H\rightarrow \unit_\cC=:A$ to the monoidal
unit $\unit$ of $\cC$,
we get $r_\cA(H)=H$.
\end{itemize}

\medskip

{\it Group algebras} of a finite groups already
provide examples of non-trivial partial dualizations.
Consider the complex group algebra $\C[G]$ of a finite
group $G$ which we assume to be  a semi-direct product $G=N\rtimes Q$.
As a consequence, $\C[G]$ is a Radford biproduct: 
$\C[G] = \C[N] \rtimes \C[Q]$ with  a trivial coaction and 
a non-trivial action of $\C[Q]$ on $\C[N]$. The (cocommutative) 
Hopf algebra $\C[Q]$ is dual to the (commutative) Hopf algebra 
$\C^Q$ of complex functions on $Q$.
The partial dualization with respect to the Hopf subalgebra 
$\C[Q]$ yields a Hopf algebra $\C[N] \rtimes \C^Q$ with a 
trivial action and a non-trivial coaction of $\C^Q$ on $\C[N]$.
The partially dualized Hopf algebra is neither a group algebra nor 
a dual group algebra. 

The monoidal category of modules over the partially dualized
Hopf algebra  $r_\cA(\C[G])=\C[N] \rtimes \C^Q$ turns out to be 
monoidally equivalent to the category of bimodules over
an algebra in the category $\mathrm{vect}_G$ of $G$-graded
vector spaces. Our general
result thus implies that the Drinfel'd center of the category $\lMod{\C[G]}{}$ and 
the Drinfel'd center of the category of bimodules 
are
braided equivalent.
This equivalence is a special case of \cite[Theorem 3.3]{Schau01}.

\medskip
 
The {\it Taft algebra} $T_\zeta$, with $\zeta$ a primitive $d$-th root 
of unity, is the Hopf algebra generated by a group-like element 
$g$ of order $d$, and a skew-primitive element $x$ with coproduct
$\Delta(x)=g\otimes x+x\otimes 1$. It has a projection 
$\pi$ to the Hopf subalgebra $A\cong\C[\mathbb{Z}_d]$
generated by $g$. The partial dualization $r_\cA(H)$ is 
isomorphic to the Taft algebra itself; the isomorphism 
depends on a choice of a Hopf pairing $\omega:A\otimes A\to\C$
and thus on a primitive $d$-th root of unity. An
example with non-trivial partial dualization is provided by a  
central extension
$\hat{T}_{\zeta,q}$ of the Taft algebra $T_\zeta$ by group-like elements.
The partial dualization $r_\cA(\hat{T}_{\zeta,q})=:\check{T}_{\zeta,q}$ 
then does not possess such central group-like elements;
instead,  the coproduct of the skew-primitive element of 
$\check{T}_{\zeta,q}$ is modified, leading to 
additional central characters for the partially dualized Hopf algebra $\check{T}_{\zeta,q}$.

\medskip

The {\it reflection of generalized quantum groups} as 
introduced in \cite{AHS10,HS13} was the original motivation of our construction.
In this case, the braided category $\cC$ is a category of Yetter-Drinfel'd
modules over a complex Hopf algebra $h$, i.e.\
$\cC=\YD{h}(\mathrm{vect}_\C)$. Usually, the complex Hopf algebra
$h$ is the complex group algebra of a finite group, $h=\C[G]$.
Next, fix a semisimple object 
$$M=M_1\oplus M_2 \oplus\cdots M_n$$
in $\cC$ and consider the Nichols algebra $\cB(M)$. It
is a Hopf algebra in the braided category $\cC$ and
plays the role of a quantum Borel part 
of a pointed Hopf algebra.

For each simple subobject $M_i$ in the direct decomposition of $M$, 
the Nichols algebra $A_i:=\cB(M_i)$ is a subalgebra of
$\cB(M)$; moreover, there is a natural projection of Hopf algebras 
$\pi_i: \cB(M)\to \cB(M_i)$. The Nichols algebra 
$\cB(M_i^*)$ for the object $M_i^*$ in $\cC$ dual to $M_i$ comes with
a non-degenerate Hopf pairing $\omega_i:\cB(M_i)\otimes\cB(M_i^*)\to\unit$.
We can thus perform a partial dualization. 
If the Nichols algebra $\cB(M)$ plays the role of a quantum Borel part,
the partially dualized Hopf algebra of $\cB(M)$ is
isomorphic to a quantum Borel part of in
$u_q(\fg)$ after a reflection on a simple root.

Examples are known that do not correspond to semisimple Lie algebras 
\cite{Heck09} and that can have a non-abelian Cartan subalgebra
\cite{AHS10,HS10a}. In these cases, a Borel subalgebra $H$ 
and its reflection $r_\cA(H)$ are not necessarily isomorphic.
We exhibit an explicit example in Section \ref{sec:ExampleNicholsAlgebra}

\subsection{Structure of the article and summary of results}

Section \nref{sec:Preliminaries} contains an overview of the 
theory of Hopf algebras in braided categories.
Some readers may prefer to skip Sections \ref{sec:Preliminaries} and
\ref{sec:YDM}, assuming that the braided category is the one of complex
vector spaces
$\mathcal{C}=\mathrm{vect}_\mathbb{C}$, and thereby restricting 
themselves to the case when $H$ is a finite-dimensional 
complex Hopf algebra. (This setting is not general enough to
cover the example of pointed Hopf algebras, though.)

In Section \nref{sec:YDM}, we review the notion of Yetter-Drinfel'd
modules over a Hopf algebra $A$ in a braided category 
$\cC$ as defined in \cite{Besp95}. The category of Yetter-Drinfel'd modules
is a braided category $\YD{A}(\mathcal{C})$. As in case of complex Yetter-Drinfel'd
modules, there exists a notion of a Radford biproduct or 
Majid bosonization for Hopf algebras in $\cC$ 
(see Definition \nref{d:RadfordBiproduct}); 
it turns a Hopf algebra $K$ in the braided category 
$\YD{A}(\mathcal{C})$ into a Hopf algebra 
$K\rtimes A$ in $\cC$.

The Radford projection theorem \nref{thm:RadfordProjection}
provides a converse: given a projection $\pi:H\rightarrow A$ in  $\cC$
to a Hopf subalgebra $A\subset H$, the  coinvariants 
$K\subset H$ with respect to $\pi$ have a natural structure
of a Hopf algebra  in the braided category $\YD{A}(\mathcal{C})$, 
such that $H$ can be expressed as a Radford biproduct,   
$H\cong  K\rtimes A$. It is then known
\cite[Proposition 4.2.3]{Besp95}, see also
Theorem \nref{thm:RadfordYDCategories}, that the following
braided categories are isomorphic:
$$\YD{K \rtimes A}(\cC) \cong \YD{K}(\YD{A}(\cC ))
\,\, . $$

We are now ready to describe the construction of partial dualization:
suppose that $A$ and $B$ are Hopf algebras and 
that $\omega : A\otimes B \rightarrow\unit_\cC$ is
a non-degenerate Hopf pairing. We relate the categories
of Yetter-Drinfel'd modules by an
isomorphism of braided categories
$$\Omega^\omega:\YD{A}(\cC)\stackrel\sim\to\YD{B}(\cC)
\,\, .$$
It is constructed in two steps: we use the Hopf pairing
$\omega$ to turn the left $A$-action into a
right $B$-coaction and the left $A$-coaction into
a right $B$-action. Then the braiding of $\cC$ is used in a second
step to turn right (co-)actions into left (co-)actions.
Schematically, indicating the relevant propositions of the
paper, we have
    \[
    \Omega^\omega:\;
    \begin{xy}
    \xymatrix{
    \YD{A}(\cC )\ar[r]^{\text{
    \nref{cor:pairing_functor}}} & \rYD{B}(\cC
)\ar[r]^{\text{\nref{thm:side_switch_functor}}} & \YD{B}(\cC ) .
    }
    \end{xy}
    \]
Diagrammatically, the $B$-action and $B$-coaction 
for the Yetter-Drinfel'd module $\Omega^\omega(X)$
are given as follows:  
\begin{align*}
        \newcommand{\rhoomega}{\raisebox{-.5\totalheight}{
        \begin{tikzpicture}
                \begin{scope}[scale=0.5]
                        \dAction{0}{2}{1}{-1}{\grau}{black}
                        \dSkewantipode{2}{1}{\grau}
                        \dSkewantipode{2}{2}{\grau}
                        \dPairing{1}{3}{1}{1}{\grau}{\tiny $\! \omega \!$}
                        \vLine{0}{2}{1}{3}{\grau}
                        \vLineO{1}{2}{0}{3}{black}
                        \vLine{1}{0}{2}{1}{\grau}
                        \vLineO{2}{0}{1}{1}{black}
                        \vLine{0}{3}{0}{4}{black}
                        \vLine{1}{-1}{1}{0}{\grau}
                        \vLine{2}{-1}{2}{0}{black}
                \end{scope}
                \draw (0.5 , -0.7) node {\tiny $B$};
                \draw (1 , -0.7) node {\tiny $X$};
                \draw (0 , 2.2) node {\tiny $X$};
        \end{tikzpicture}
        }
        }
        \newcommand{\deltaomega}{\raisebox{-.5\totalheight}{
        \begin{tikzpicture}
                \begin{scope}[scale=0.5]
                        \dAction{1}{1}{1}{1}{\grau}{black}
                        \dAntipode{0}{1}{\grau}
                        \dAntipode{0}{2}{\grau}
                        \dCopairing{0}{1}{1}{1}{\grau}{\tiny $\!\! \omega' \!\!
\vspace*{-1mm}$}
                        \vLine{1}{3}{0}{4}{black}
                        \vLineO{0}{3}{1}{4}{\grau}
                        \vLine{1}{4}{0}{5}{\grau}
                        \vLineO{0}{4}{1}{5}{black}
                        \vLine{2}{0}{2}{1}{black}
                        \vLine{2}{2}{1}{3}{black}
                \end{scope}
                \draw (1 , -0.2) node {\tiny $X$};
                \draw (0 , 2.7) node {\tiny $B$};
                \draw (0.5 , 2.7) node {\tiny $X$};
        \end{tikzpicture}
        }
        }
        \newcommand{\omegatwo}{\raisebox{-.5\totalheight}{
        \begin{tikzpicture}
                \begin{scope}[scale=0.5]
                        \dAction{0}{2}{1}{1}{\grau}{black}
                        \dAction{1}{1}{1}{-1}{\grau}{black}
                        \vLine{0}{0}{0}{1}{black}
                        \vLine{0}{1}{1}{2}{black}
                        \vLineO{1}{1}{0}{2}{\grau}
                        \vLine{2}{1}{2}{3}{black}
                \end{scope}
                \draw (0 , -0.2) node {\tiny $X$};
                \draw (1 , -0.2) node {\tiny $Y$};
                \draw (0.5 , 1.7) node {\tiny $X$};
                \draw (1 , 1.7) node {\tiny $Y$};
        \end{tikzpicture}
        }
        }
         \rhoomega, \deltaomega 
    \end{align*}
Here, filled circles denote the inverse of the antipode and 
empty circles the antipode of $B$.

As already explained,
the input of our construction is a {\em partial dualization datum}
$\cA$:
it consists of a Hopf algebra projection  $\pi:\;H\rightarrow A$ 
to a Hopf subalgebra, and a Hopf algebra $B$ with a non-degenerate
Hopf pairing $\omega: A\otimes B\rightarrow \unit_\cC$.
In Section \nref{sec:PartialDualization}, we construct for a
given partial dualization datum $\cA$ a new Hopf algebra $r_\cA(H)$
in $\cC$ as follows:

\begin{enumerate}
\item The Radford projection theorem, applied to the
projection $\pi:H\to A$, allows us to write the Hopf algebra
$H$ in the form $H\cong K\rtimes A$, with $K$ a Hopf
algebra in the braided category $\YD{A}(\cC)$.

\item 
The braided monoidal equivalence $\Omega^\omega$ implies that
the image of the Hopf algebra $K$ in the braided category
$\YD{A}(\cC)$ is a Hopf algebra $L:=\Omega^\omega(K)$ in the braided category
$\YD{B}(\cC)$.

\item The partially dualized Hopf algebra $r_\cA(H)$ is
defined as 
the bosonization $r_\cA(H):=L\rtimes B$ of $L$. This is a Hopf
algebra in the braided category $\cC$.
\end{enumerate}

To summarize, we dualize a Hopf subalgebra $A$ of $H$
and at the same time covariantly transform the remaining 
coinvariants $K\subset H$ to $L\subset r_\cA(H)$.  As a combination
of contra- and covariant operations, 
partial dualization is {\em not} functorial in $H$.

We list some more results of Section \nref{sec:PartialDualization}:
\begin{itemize}
\item
{} the partial dualization datum $\cA$ for $H$, one can
obtain a partial dualization datum $\cA^-$ of the Hopf algebra
$r_\cA(H)$. There is a canonical isomorphism of Hopf algebras
in $\cC$ such that
    $$r_{\cA^-}\left(
    r_{\cA}(H)\right)\cong H \,\,, $$
showing that partial dualization is essentially involutive.

\item
Theorem \nref{thm:YetterDrinfeldsCoincide} then asserts
that the categories of Yetter-Drinfel'd modules for a Hopf algebra
$H$ in $\cC$ and its partial dualization $r_\cA(H)$ are braided
equivalent:    
      $$\YD{H}\left(\cC\right)
      \cong\YD{K}\left(\YD{A}\left(\cC\right)\right)
      \stackrel{\Omega^\omega}{\longrightarrow}
        \YD{L}\left(\YD{B}\left(\cC\right)\right)
      \cong\YD{r_\cA(H)}\left(\cC\right)$$

\end{itemize}

In Section \nref{sec:Examples}, we finally discuss three
classes of examples of partial dualizations.

\section{Preliminaries}\label{sec:Preliminaries}
We assume that the reader is familiar with the definition of a braided 
monoidal category, see e.g. \cite{Kass95} as a general reference.
Denote by $\cC$ a  monoidal category with
tensor product $\otimes$ and unit object $\unit$;
without loss of generality, we assume that $\cC$ is strict.
If $\cC$ is endowed with a braiding, we denote it by $c : \otimes \rightarrow
\otimes^\op$.
For any braided category $\cC$,  
the monoidal category $\cC$ with the inverse braiding
$\ov{c}_{X,Y} := c^{-1}_{Y,X}$ is denoted by $\ov{\cC}$.\\
We use the graphical calculus for braided categories for
which we fix our conventions as follows:
diagrams are read from bottom to top. 
In Fig.\! \ref{morphisms_in_monoidal_categories} we depict the identity of an
object $X$ in $\cC$, a morphism $h : X_1 \otimes \ldots \otimes X_n \rightarrow
Y_1 \otimes \ldots \otimes Y_m$,
the composition $g \circ f$ of morphisms $f : X\rightarrow Y$ and $g : Y
\rightarrow Z$ and the tensor product of $f$ with
$f' : X' \rightarrow Y'$ by juxtaposition. 
The braiding $c_{X,Y} : X \otimes Y
\rightarrow Y \otimes X$ and the inverse braiding $c_{X,Y}^{-1} : Y \otimes X
\rightarrow X \otimes Y$ are shown in Fig.\!
\ref{braiding_and_Inverse_braiding}.
\begin{figure}[h]
  \centering
  $\id_X =$
  \begin{grform}
    \draw (0,0) -- (0,2);
    \draw (0,-0.5) node {$X$};
    \draw (0,2.5) node {$X$};
  \end{grform}, \quad
  $h =$
  \begin{tikzpicture}[intext]
    \draw (0,0) -- (0,2);
    \draw (1,0) -- (1,2);
    \draw (0,-0.5) node {$X_1$};
    \draw (1,-0.5) node {$X_n$};
    \draw (0,2.5) node {$Y_1$};
    \draw (1,2.5) node {$Y_m$};
    \draw (0.5,0.2) node {$\cdots$};
    \draw (0.5,1.8) node {$\cdots$};
    \draw (0.5,1) node [shape = rectangle, draw, fill = white] {$\quad h
\quad$};
  \end{tikzpicture},\quad
  $g \circ f =$
  \begin{tikzpicture}[intext]
    \draw (0 , 0) -- (0,2);
    \draw (0 , -0.5) node {$X$};
    \draw (0 , 2.5) node {$Z$};
    \draw (0.5 , 1) node {$Y$};
    \draw (0 , 0.5) node [shape = rectangle, draw, fill = white] {$ f $};
    \draw (0 , 1.5) node [shape = rectangle, draw, fill = white] {$ g $};
  \end{tikzpicture},\quad
  $f \otimes f' =$
  \begin{tikzpicture}[intext]
    \draw (0,0) -- (0,2);
    \draw (1,0) -- (1,2);
    \draw (0,-0.5) node {$X$};
    \draw (1,-0.5) node {$X'$};
    \draw (0,2.5) node {$Y$};
    \draw (1,2.5) node {$Y'$};
    \draw (0,1) node [shape = rectangle, draw, fill = white] {$ f $};
    \draw (1,1) node [shape = rectangle, draw, fill = white] {$ f' $};
  \end{tikzpicture}\quad.
  \caption{Graphical notation for morphisms in monoidal categories}
  \label{morphisms_in_monoidal_categories}
\end{figure}
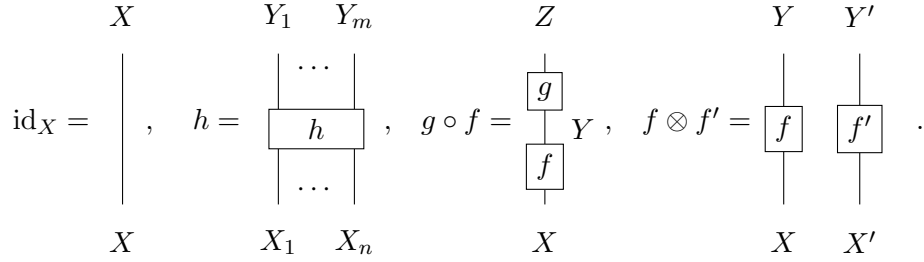
\begin{figure}
  \centering
  $c_{X,Y}= $
  \begin{tikzpicture}[intext]
    \draw (1,0) .. controls (1,1) and (0,1) .. (0,2);
    \draw [color = white, line width = 10pt] (0,0) .. controls (0,1) and (1,1)
.. (1,2);
    \draw (0,0) .. controls (0,1) and (1,1) .. (1,2);
    \draw (0,-0.5) node {$X$};
    \draw (1,-0.5) node {$Y$};
    \draw (0,2.5) node {$Y$};
    \draw (1,2.5) node {$X$};
  \end{tikzpicture},\quad
  $c_{X,Y}^{-1}= $
  \begin{tikzpicture}[intext]
    \draw (0,0) .. controls (0,1) and (1,1) .. (1,2);
    \draw [color = white, line width = 10pt] (1,0) .. controls (1,1) and (0,1)
.. (0,2);
    \draw (1,0) .. controls (1,1) and (0,1) .. (0,2);
    \draw (0,-0.5) node {$Y$};
    \draw (1,-0.5) node {$X$};
    \draw (0,2.5) node {$X$};
    \draw (1,2.5) node {$Y$};
  \end{tikzpicture}
  \caption{Braiding and inverse braiding}
  \label{braiding_and_Inverse_braiding}
\end{figure}
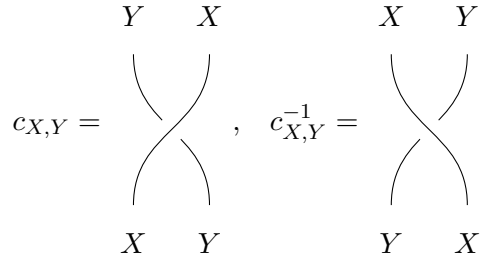

\subsection{Hopf algebras in braided categories}
We recall the definitions of an algebra and of a coalgebra
in a (not necessarily braided) monoidal category
and the notion of a bialgebra resp. Hopf algebra in a braided 
category $\cC$. For the braided category of $\Bbbk$-vector spaces,
these definitions specialize to the textbook definitions.

\begin{definition}
\label{d:Hopf_algebra}
        Let $\cC$ be a braided category. 
        An object $A$ together with morphisms 
        $\mu_A : A \otimes A \rightarrow A$, $\eta_A : \unit \rightarrow A$, 
        $\Delta_A : A \rightarrow A \otimes A$, $\eps_A : A \rightarrow \unit$
and $S : A \rightarrow A$ is called \emph{Hopf algebra in $\cC$}, 
        if
        \begin{enumerate}
                \item
                        the triple $(A, \mu_A, \eta_A)$ is a unital, associative
algebra in $\cC$, i.e.
                        \begin{align*}
                                \mu_A \circ (\mu_A \otimes \id_A ) &= \mu_A
\circ (\id_A \otimes \mu_A )\\
                                \mu_A \circ (\eta_A \otimes \id_A ) &= \id_A =
\mu_A \circ (\id_A \otimes \eta_A ),
                        \end{align*}
                \item
                        the triple $(A, \Delta_A, \eps_A)$ is a counital,
coassociative coalgebra in $\cC$, i.e.
                        \begin{align*}
                                (\Delta_A \otimes \id_A ) \circ \Delta &= (\id_A
\otimes \Delta_A ) \circ \Delta_A\\
                                (\eps_A \otimes \id_A ) \circ \Delta &= \id_A =
(\id_A \otimes \eps_A ) \circ \Delta_A,
                        \end{align*}
                \item
                        the morphisms $\mu_A, \eta_A, \Delta_A$ and $\eps_A$
obey the equations
                        \begin{align*}
                                \Delta_A \circ \mu_A  &= (\mu_A \otimes \mu_A )
\circ (\id_A \otimes c_{A,A} \otimes \id_A ) \circ (\Delta_A \otimes \Delta_A
)\\
                                \eps_A \circ \mu_A    &= \eps_A \otimes \eps_A\\
                                \Delta_A \circ \eta_A &= \eta_A \otimes \eta_A
\\
                                \eps_A \circ \eta_A   &= \id_{\unit},
                        \end{align*}
                \item
                        the morphism $S_A : A \rightarrow A$ is invertible and
obeys
                        \begin{align*}
                                \mu_A \circ (S_A \otimes \id_A ) \circ \Delta_A
= \eta_A \circ \eps_A = \mu_A \circ (\id_A \otimes S_A ) \circ \Delta_A.
                        \end{align*}
        \end{enumerate}
We call $\mu_A$ (resp. $\Delta_A$) the \emph{(co)multiplication} 
of $A$ and
$\eta_A$ (resp. $\eps_A$) the \emph{(co)unit} of $A$.
The morphism $S_A$ is called the \emph{antipode of $A$}.
\end{definition}

\begin{remark}
The unit, counit and antipode of a Hopf algebra are unique. 
        Thus, to define a Hopf algebra, it is only necessary to specify the
        multiplication and comultiplication and 
        we can unambiguously talk about the Hopf algebra 
        $(A,\mu, \Delta)$; sometimes, we suppress the structure
        morphisms in the notation.
\end{remark}

\begin{example}
\label{example:basic_Hopf_algebras}
\begin{enumerate}
\item
The monoidal unit $\unit$ of $\cC$ is a Hopf algebra with
all structural morphisms given by $\id_\unit$.
                \item
                        If $A = (A, \mu, \Delta)$ is a Hopf algebra in $\cC$, 
                        then $A^\op := (A, \mu^-, \Delta)$ and $A^\cop := (A, \mu, \Delta^-)$
                        with $\mu^- := \mu \circ c_{A,A}^{-1}$ and $\Delta := c_{A,A}^{-1} \circ \Delta$ 
                        are Hopf algebras not in $\cC$, but
                        rather in the category $\ov{\cC}$
                        with inverse braiding.
        \end{enumerate}
\end{example}

\begin{definition}
        Let $A$ and $B$ be Hopf algebras in $\cC$. A morphism $f : A \rightarrow
B$ in $\cC$ is a Hopf algebra morphism, 
        if $f$ is an algebra homomorphism,
        \[
                f \circ \mu_A = \mu_B \circ (f \otimes f) \quad \text{and} \quad
f \circ \eta_A = \eta_B,
        \]
        and a coalgebra homomorphism, 
        \[
                \Delta_B \circ f = (f \otimes f) \circ \Delta_A \quad \text{and}
\quad \eps_B \circ f = \eps_A.
        \]
\end{definition}

\begin{remark}
\label{rem:Antipode_as_homomorphism}
Standard textbook results continue to hold for Hopf algebras
in a braided category:
        Hopf algebra homomorphisms commute with the antipode, i.e. $f \circ S_A
= S_B \circ f$.
The antipode $S$ of a Hopf algebra $A$ is an isomorphism of Hopf
algebras in $\ov{\cC}$
        \[
                S : A^\op \rightarrow A^\cop.
        \]
Since $(A^\op)^\op = A$, 
we see that $S$ is also an isomorphism between
the following Hopf algebras in $\cC$
        \[
                S : A \rightarrow (A^\cop )^\op.
        \]
Note that $(A^\op )^\cop$ and $(A^\cop )^\op$ are in general different
Hopf algebras. 
        Nevertheless, $S^2$ is an isomorphism of Hopf algebras from $(A^\op
)^\cop$ to $(A^\cop )^\op$.
\end{remark}

\subsection{Modules and comodules over Hopf algebras}
In a monoidal category, modules over an associative algebra and comodules over an associative coalgebra 
are defined as usual. Modules, as well as comodules, over a Hopf algebra
in a braided category form a monoidal category. 
A new technical feature are
'side switch' functors $\sT$ which establish the equivalence of the
categories of left $A$-(co)mo\-du\-les in the braided category $\cC$ 
and right $A^\op$-modules (resp.\ $A^\cop$-comodules) in 
the braided category $\ov{\cC}$.
\begin{definition}
\quad
        \begin{enumerate}
\item
Let $A$ be an algebra in $\cC$. A \emph{left $A$-module}
is an object $X\in\cC$, together with a morphism 
$\rho = \rho_X : A \otimes X \rightarrow X$, such that
\begin{align*}
                                \rho \circ (\id_A \otimes \rho ) = \rho \circ
(\mu \otimes \id_X ) \quad \text{and} \quad \rho \circ (\eta \otimes \id ).
                        \end{align*}
                \item
                        Let $X$ and $Y$ be two $A$-modules. A morphism $f : X
\rightarrow Y$ is called \emph{$A$-linear}, if
                        \[
                                f \circ \rho_X = \rho_Y \circ (\id_A \otimes f).
                        \]
\item
Let $A$ be a coalgebra in $\cC$. A \emph{left $A$-comodule}
is  an object $Y$ in $\cC$,
together with a morphism 
$\delta = \delta_X : X \rightarrow A \otimes X$,
such that
                        \begin{align*}
                                (\id_A \otimes \delta ) \circ \delta = (\Delta
\otimes\id_X ) \circ \delta \quad \text{and} \quad ( \eps \otimes \id_X ) \circ
\delta_X = \id_X.
                        \end{align*}
                \item
                        Let $X$ and $Y$ be two $A$-comodules. A morphism $f : X
\rightarrow Y$ is called \emph{$A$-colinear}, if
                        \[
                                \delta_Y \circ f = (\id_A \otimes f) \circ
\delta_X.
                        \]
        \end{enumerate}
\end{definition}

\begin{remark}
\quad
\begin{enumerate}
\item
The left $A$-modules over a Hopf algebra $A$,
together with $A$-linear maps, form a monoidal category $\lMod{A}{\cC}$.
The tensor product of an $A$-module $(X, \rho_X)$ and an
$A$-module $(Y,\rho_Y)$ is given by
the usual action of $A$ on $X \otimes Y$, i.e.
$\rho_{X \otimes Y} := (\rho_X \otimes \rho_Y)
\circ (\id_A \otimes c_{A,X} \otimes \id_Y) 
\circ (\Delta \otimes \id_{X \otimes Y})$.
The monoidal unit is the $A$-module $(\unit, \eps)$.
\item
Similarly, left $A$-comodules over a
Hopf algebra $A$ form a monoidal category $\lComod{A}{\cC}$.
Given an $A$-comodule $(X, \delta_X)$ and
an $A$-comodule $(Y,\delta_Y)$ the
coaction of $A$ on $X \otimes Y$ is given by
$\delta_{X \otimes Y} := (\mu \otimes \id_{X
\otimes Y}) \circ (\id_A \otimes c_{X,A} \otimes \id_Y) \circ (\delta_X \otimes
\delta_Y)$.
The monoidal unit is the $A$-comodule $(\unit, \eta)$.
\item
The monoidal categories  of right $A$-modules and
right $A$-comodules are denoted by $\rMod{A}{\cC}$ and
$\rComod{A}{\cC}$, respectively.
\item
Figure \ref{fig:morphisms} lists our graphical
notation for structure morphism of Hopf algebras and 
left and right modules and comodules. Different colours
have the only purpose of improving the readability of the 
diagrams.
\end{enumerate}
\end{remark}

\begin{figure}[h]
  \centering
  $\mu = $
  \begin{tikzpicture}[intext]
    \begin{scope}[xscale = 0.5]
                \dMult{0.5}{0}{1}{1}{\grau}
    \end{scope}
    \draw (0.25,-0.3) node {$A$};
    \draw (0.75,-0.3) node {$A$};
    \draw (0.5,1.3) node {$A$};
  \end{tikzpicture},\quad
  $\eta = $
  \begin{tikzpicture}[intext]
    \draw[white] (0.5,0) -- (0.5,0.5);
    \begin{scope}[scale = 0.5]
                \vLine{1}{2}{1}{3}{\grau}
                \dUnit{1}{2}{\grau}
    \end{scope}
    \draw (0.5,1.8) node {$A$};
  \end{tikzpicture},\quad
  $\Delta = $
  \begin{tikzpicture}[intext]
    \begin{scope}[xscale = 0.5]
      \dMult{0.5}{1}{1}{-1}{\grau}
    \end{scope}
    \draw (0.5,-0.3) node {$A$};
    \draw (0.25,1.3) node {$A$};
    \draw (0.75,1.3) node {$A$};
  \end{tikzpicture},\quad
  $\eps = $
  \begin{tikzpicture}[intext]
    \draw[white] (0.5,1) -- (0.5,0.5);
    \begin{scope}[scale = 0.5]
                \dCounit{1}{0}{\grau}
                \vLine{1}{0}{1}{-1}{\grau}
    \end{scope}
    \draw (0.5,-0.8) node {$A$};
  \end{tikzpicture},\quad
  $S = $
  \begin{tikzpicture}[intext]
    \begin{scope}[scale = 0.5]
                \vLine{1}{0}{1}{2}{\grau}
                \dAntipode{1}{0.5}{\grau}
    \end{scope}
    \draw (0.5,-0.3) node {$A$};
    \draw (0.5,1.3) node {$A$};
  \end{tikzpicture},\quad
  $S^{-1} = $
  \begin{tikzpicture}[intext]
    \begin{scope}[scale = 0.5]
                \vLine{1}{0}{1}{2}{\grau}
                \dSkewantipode{1}{0.5}{\grau}
    \end{scope}
    \draw (0.5,-0.3) node {$A$};
    \draw (0.5,1.3) node {$A$};
  \end{tikzpicture},\\
  $\rho = $
  \begin{tikzpicture}[intext]
    \begin{scope}[scale = 0.5]
                \dAction{1}{0}{1}{2}{\grau}{black}
    \end{scope}
    \draw (0.5,-0.3) node {$A$};
    \draw (1,-0.3) node {$X$};
    \draw (1,1.3) node {$X$};
  \end{tikzpicture},\quad
  $\delta = $
  \begin{tikzpicture}[intext]
    \begin{scope}[scale = 0.5]
                \dAction{1}{2}{1}{-2}{\grau}{black}
    \end{scope}
    \draw (0.5,1.3) node {$A$};
    \draw (1,-0.3) node {$X$};
    \draw (1,1.3) node {$X$};
  \end{tikzpicture},
  $\rho^r = $
  \begin{tikzpicture}[intext]
    \begin{scope}[scale = 0.5]
                \dAction{2}{0}{-1}{2}{\grau}{black}
    \end{scope}
    \draw (1,-0.3) node {$A$};
    \draw (0.5,-0.3) node {$X$};
    \draw (0.5,1.3) node {$X$};
  \end{tikzpicture},\quad
  $\delta^r = $
  \begin{tikzpicture}[intext]
    \begin{scope}[scale = 0.5]
                \dAction{2}{2}{-1}{-2}{\grau}{black}
    \end{scope}
    \draw (1,1.3) node {$A$};
    \draw (0.5,-0.3) node {$X$};
    \draw (0.5,1.3) node {$X$};
  \end{tikzpicture}.
  \caption{Standard notation for structure morphisms}
  \label{fig:morphisms}
\end{figure}
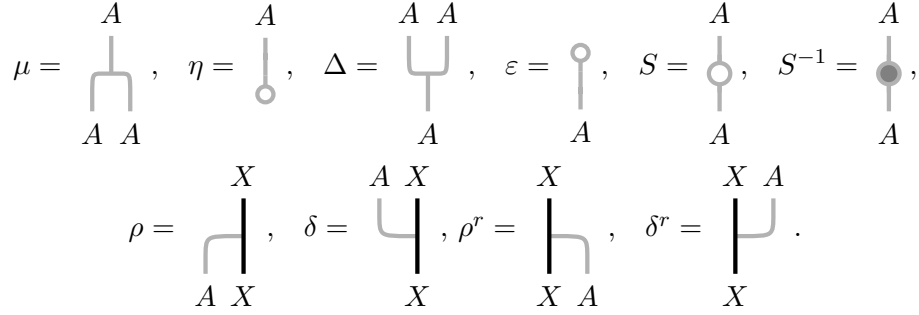

\begin{lemma}
For any  Hopf algebra homomorphism $\varphi : A \rightarrow B$, the
\emph{restriction} is  the strict monoidal functor 
$\us \circ (\varphi \otimes \id):
\lMod{B}{\cC} \rightarrow \lMod{A}{\cC}$, sending
the $B$-module $(Y, \rho)$ to the $A$-module $(Y,
\rho \circ (\varphi \otimes \id_Y))$. \\
{\em Corestriction} is strict monoidal functor 
$(\varphi \otimes \id) \circ
\us : \lComod{A}{\cC} \rightarrow \lComod{B}{\cC}$, sending the 
$A$-comodule $(X, \delta)$ to the $B$-comodule
$(X, (\varphi \otimes \id_X) \circ \delta )$.
\end{lemma}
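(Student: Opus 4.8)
\section*{Proof plan}

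The plan is to verify, for each of the two functors, the three things required: that it is well defined on objects (the transported (co)action satisfies the (co)module axioms), that it is well defined on morphisms (it preserves (co)linearity), and that it is strictly monoidal (it preserves tensor products and the unit on the nose). Throughout, the only inputs are the four defining equations of a Hopf algebra homomorphism, and the point worth tracking is which pair of these equations is needed at each stage.

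First I would treat restriction. For a $B$-module $(Y,\rho)$ set $\rho' := \rho \circ (\varphi \otimes \id_Y)$. Associativity of $\rho'$ reads $\rho' \circ (\id_A \otimes \rho') = \rho' \circ (\mu_A \otimes \id_Y)$; substituting the definition and moving the two copies of $\varphi$ inward, this follows from the $B$-associativity of $\rho$ together with the algebra-homomorphism identity $\varphi \circ \mu_A = \mu_B \circ (\varphi \otimes \varphi)$. Unitality $\rho' \circ (\eta_A \otimes \id_Y) = \id_Y$ follows from $\varphi \circ \eta_A = \eta_B$ and the unitality of $\rho$. Preservation of morphisms is immediate: if $f \circ \rho_Y = \rho_{Y'} \circ (\id_B \otimes f)$, then precomposing both sides with $\varphi \otimes \id$ yields the corresponding $A$-linearity. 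For strict monoidality one must show that restricting the $B$-action $\rho_{Y \otimes Y'}$ on a tensor product gives the tensor-product $A$-action built from the restricted factors; writing out both sides, the comparison reduces, after using naturality of the braiding $c$ to pull the copies of $\varphi$ past it, precisely to the coalgebra-homomorphism identity $\Delta_B \circ \varphi = (\varphi \otimes \varphi) \circ \Delta_A$. Finally the monoidal unit $(\unit,\eps_B)$ is sent to $(\unit, \eps_B \circ \varphi) = (\unit, \eps_A)$ by $\eps_B \circ \varphi = \eps_A$, so the unit is preserved strictly.

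The corestriction case is dual, and I would transpose the argument, exchanging the roles of the algebra and coalgebra structures. For an $A$-comodule $(X,\delta)$ put $\delta' := (\varphi \otimes \id_X) \circ \delta$; coassociativity of $\delta'$ now rests on the coalgebra-homomorphism identity $\Delta_B \circ \varphi = (\varphi \otimes \varphi) \circ \Delta_A$, counitality on $\eps_B \circ \varphi = \eps_A$, while strict monoidality, comparing the transported coaction on $X \otimes X'$ with the $B$-comodule structure assembled from the corestricted factors, reduces (again after applying naturality of $c$) to the algebra-homomorphism identity $\varphi \circ \mu_A = \mu_B \circ (\varphi \otimes \varphi)$, and preservation of the unit $(\unit,\eta_A) \mapsto (\unit, \varphi \circ \eta_A) = (\unit,\eta_B)$ uses $\varphi \circ \eta_A = \eta_B$.

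There is no genuine obstacle here; the lemma is a compatibility check. The only thing worth watching is the crossed dependence just noted: restriction uses the \emph{algebra} half of the homomorphism axioms to land in $A$-modules but the \emph{coalgebra} half to be monoidal, and corestriction uses them the other way around. Strictness, as opposed to mere laxness, is automatic because neither functor alters the underlying object of $\cC$ nor the ambient tensor product, so all coherence morphisms are identities.
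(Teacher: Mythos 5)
Your proof is correct, and it is precisely the routine verification the paper has in mind: the lemma is stated there without proof, being a standard compatibility check. Your observation that restriction needs the algebra half of the homomorphism axioms to be well defined but the coalgebra half (together with naturality of the braiding $c$) to be strictly monoidal, and corestriction the reverse, is exactly the content worth recording, and your argument fills the gap the paper leaves to the reader faithfully.
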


The side switch functor uses the braiding on $\cC$ to turn
a left comodule into a right comodule:

\begin{lemma}
        \label{lem:twisting_functors}
Let $A$ be a Hopf algebra in $\cC$ and $(X,\delta)$
and $A$-comodule. The pair
        \[
                {}^A\sT (X, \delta) := (X, c^{-1}_{X,A} \circ \delta )
        \]
is a right $A^\cop$-comodule; the assignment ${}^A \sT$  defines
a strict 
monoidal functor ${}^A \sT : \lComod{A}{\cC} \rightarrow \rComod{{A^\cop}}{\ov{\cC}}$.
\end{lemma}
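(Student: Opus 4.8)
The plan is to verify the two right-comodule axioms for the candidate coaction $\delta^r := c_{X,A}^{-1} \circ \delta$, to check that $\delta^r$ respects colinear morphisms, and finally to establish strict monoidality. Throughout I would argue in the graphical calculus fixed in the preliminaries, so that the braiding strands can be tracked visually; each diagrammatic move below is an instance of either naturality of the braiding or of the hexagon axiom.

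For the counit axiom I would compute $(\id_X \otimes \eps) \circ \delta^r = (\id_X \otimes \eps) \circ c_{X,A}^{-1} \circ \delta$ and slide $\eps : A \to \unit$ along the crossing by naturality of the inverse braiding. Since $c_{X,\unit}^{-1} = \id_X$ in a strict category, this collapses to $(\eps \otimes \id_X) \circ \delta = \id_X$, the counit axiom of the left comodule $(X,\delta)$.

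The principal obstacle is coassociativity for the coalgebra $A^\cop$ in $\ov{\cC}$, that is, $(\delta^r \otimes \id_A) \circ \delta^r = (\id_X \otimes \Delta^-) \circ \delta^r$ with $\Delta^- = c_{A,A}^{-1} \circ \Delta$. Starting from the left-hand side $(c_{X,A}^{-1} \otimes \id_A) \circ (\delta \otimes \id_A) \circ c_{X,A}^{-1} \circ \delta$, I would first use naturality to rewrite $(\delta \otimes \id_A) \circ c_{X,A}^{-1}$ as $c_{A \otimes X, A}^{-1} \circ (\id_A \otimes \delta)$, then apply the left coassociativity $(\id_A \otimes \delta) \circ \delta = (\Delta \otimes \id_X) \circ \delta$, and expand $c_{A\otimes X, A}^{-1}$ by the hexagon. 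On the target side, naturality turns $(\id_X \otimes \Delta) \circ c_{X,A}^{-1}$ into $c_{X, A \otimes A}^{-1} \circ (\Delta \otimes \id_X)$, and the hexagon expands the crossing of $X$ under $A \otimes A$. Comparing the two resulting expressions, the identity reduces to a single instance of naturality of $c_{X,-}^{-1}$ applied to the morphism $c_{A,A}^{-1} : A \otimes A \to A \otimes A$, namely $(\id_X \otimes c_{A,A}^{-1}) \circ c_{X, A \otimes A}^{-1} = c_{X, A\otimes A}^{-1} \circ (c_{A,A}^{-1} \otimes \id_X)$. This braid-type bookkeeping is exactly what the string diagrams render transparent, and it is where care is needed.

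It remains to see that ${}^A\sT$ is a strict monoidal functor. Since it is the identity on underlying objects and morphisms, functoriality amounts to checking that an $A$-colinear $f : X \to Y$ stays colinear: composing $\delta_Y \circ f = (\id_A \otimes f) \circ \delta_X$ with $c_{Y,A}^{-1}$ and using naturality gives $\delta_Y^r \circ f = (f \otimes \id_A) \circ \delta_X^r$. For monoidality I would expand the tensor-product coaction of $\lComod{A}{\cC}$ from the preliminaries, precompose with $c_{X \otimes Y, A}^{-1}$, and rewrite it---again by naturality and the hexagon for $c_{X\otimes Y,A}^{-1}$---into the tensor-product coaction that $\rComod{A^\cop}{\ov{\cC}}$ assigns to ${}^A\sT(X) \otimes {}^A\sT(Y)$; note that the multiplication is unchanged in passing to $A^\cop$, so no antipode intervenes. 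The monoidal unit $(\unit,\eta)$ is fixed because $c_{\unit,A}^{-1} = \id$, and since both the object/morphism assignments and the coherence isomorphisms are identities, the monoidal structure is automatically strict.
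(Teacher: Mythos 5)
Your proposal is correct: the comodule axioms, functoriality, and strict monoidality all reduce, exactly as you indicate, to naturality of the (inverse) braiding and the hexagon identities, with $\Delta^{-}=c^{-1}_{A,A}\circ\Delta$ and the multiplication of $A^{\cop}$ unchanged. The paper states Lemma \ref{lem:twisting_functors} without proof, treating it as a routine verification, and your argument is precisely the standard graphical-calculus check it implicitly relies on, so there is nothing to add or compare.
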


\begin{remark}
\label{rem:twisting_functors}
In the same way, the braiding induces strict monoidal functors
        \begin{align*}
                {}_A \sT :\; & \lMod{A}{\cC} \rightarrow \rMod{A^\op}{\ov{\cC}}, \quad \\
                \sT^A :\; & \rComod{A}{\cC} \rightarrow \lComod{A^\cop}{\ov{\cC}}, \quad \\
                \sT_A :\; & \rMod{A}{\cC} \rightarrow \lMod{A^\op}{\ov{\cC}}.
        \end{align*}
        Note that the functor $\sT^{A^\cop} : \rComod{A^\cop}{\ov{\cC}} \rightarrow
\lComod{(A^\cop)^\cop}{\ov{\ov{\cC}}} = \lComod{A}{\cC}$ is inverse to ${}^A \sT$.
        Similarly, the functors ${}_A \sT, \sT^A$ and $\sT_A$ are invertible.
\end{remark}

\subsection{Hopf pairings}
We finally turn to the definition of a Hopf pairing between two Hopf algebras in a braided
category $\cC$.
\begin{definition}
        Let $A$ and $B$ be Hopf algebras in $\cC$. 
        A morphism $\omega : A \otimes B \rightarrow \unit$ in 
        $\cC$ is called a
\emph{Hopf pairing}, if the following identities hold
        \begin{align*}
                \omega \circ (\mu_A \otimes \id_B) &= \omega \circ (\id_A
\otimes \omega \otimes \id_B) \circ (\id_{A \otimes A} \otimes \Delta_B ),\\
                \omega \circ (\eta_A \otimes \id_B) &= \eps_B,\\
                \omega \circ (\id_A \otimes \mu_B) &= \omega \circ (\id_A
\otimes \omega \otimes \id_B) \circ (\Delta_A \otimes \id_{B \otimes B} ),\\
                \omega \circ (\id_A \otimes \eta_B) &= \eps_A.
        \end{align*}
\end{definition}

\begin{remark}
\begin{enumerate}
\item
If $A$ has a right-dual object $A^\vee$ (see \cite[Ch.\ XIV]{Kass95}
for a definition), then $A^\vee$
has a natural structure of a Hopf algebra in $\cC$ such that the evaluation
morphism ${\rm ev} : A \otimes A^\vee \rightarrow \unit$ is a Hopf
pairing.
\item
A Hopf pairing $\omega$ relates antipodes, 
        \[
                \omega \circ (S_A \otimes \id_B ) = \omega \circ (\id_A \otimes
S_B ).
        \]
\end{enumerate}
\end{remark}

A Hopf pairing $\omega: A \otimes B \rightarrow \unit$ gives 
rise to a \emph{dualization functor} $^\omega\sD$ which relates 
modules and comodules of Hopf algebras in different categories:

\begin{lemma}
        \label{lem:duality_functor}
        Let $\omega : A \otimes B \rightarrow \unit$ be a Hopf pairing and $(X,
\delta)$ a left $B$-comodule and set
        \[
                {}^\omega \sD(X, \delta) := (X, (\omega \otimes \id_X ) \circ
(\id_A \otimes \delta )).
        \]
        The assignment ${}^\omega \sD(X, \delta)$ defines a strict monoidal
functor ${}^\omega \sD : \lComod{B}{\cC} \rightarrow \lMod{A^{\cop}}{\ov{\cC}}$.
\end{lemma}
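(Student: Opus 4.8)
The plan is to verify, in turn, that the stated formula defines a left $A^{\cop}$-module structure in $\ov{\cC}$, that the assignment is functorial, and that it is strictly monoidal; the only genuinely delicate point will be the compatibility with tensor products, which is precisely what forces the target to be $\lMod{A^{\cop}}{\ov{\cC}}$ rather than $\lMod{A}{\cC}$. It is worth noting at the outset that $A$ and $A^{\cop}$ have the same underlying algebra $(A,\mu,\eta)$, and that the module axioms involve neither the coproduct nor the braiding; hence being a left $A^{\cop}$-module in $\ov{\cC}$ is, as bare data and axioms, the same as being a left $A$-module, and the distinction only becomes visible in the monoidal structure.

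First I would check that $\rho := (\omega \otimes \id_X) \circ (\id_A \otimes \delta)$ is a module structure. For associativity one composes $\rho \circ (\id_A \otimes \rho)$, uses coassociativity of the comodule $\delta$ to rewrite the twofold coaction, and matches the result with $\rho \circ (\mu_A \otimes \id_X)$ by means of the first Hopf pairing axiom $\omega \circ (\mu_A \otimes \id_B) = \omega \circ (\id_A \otimes \omega \otimes \id_B) \circ (\id_{A\otimes A} \otimes \Delta_B)$. Unitality follows from $\omega \circ (\eta_A \otimes \id_B) = \eps_B$ together with the counit axiom $(\eps_B \otimes \id_X)\circ\delta = \id_X$ of the comodule. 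Both are short string-diagram manipulations.

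Next, functoriality: since ${}^\omega\sD$ acts as the identity on underlying objects and morphisms, it suffices to check that a $B$-colinear map $f\colon X \to Y$ becomes $A^{\cop}$-linear for the induced actions. This is immediate on substituting $\delta_Y \circ f = (\id_B \otimes f)\circ \delta_X$ and sliding $f$ past the scalar $\omega$; preservation of composition and identities is then automatic. For the monoidal unit, the unit of $\lComod{B}{\cC}$ is $(\unit,\eta_B)$, and $(\omega \otimes \id_\unit)\circ(\id_A \otimes \eta_B) = \omega\circ(\id_A \otimes \eta_B) = \eps_A$ by the last Hopf pairing axiom, which is exactly the action of the monoidal unit $(\unit,\eps)$ in $\lMod{A^{\cop}}{\ov{\cC}}$. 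At this stage all four Hopf pairing axioms will have been used, a reassuring sign that the bookkeeping is correct.

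The hard part will be compatibility with tensor products. Given $B$-comodules $(X,\delta_X)$ and $(Y,\delta_Y)$, I would expand the action ${}^\omega\sD(X\otimes Y)$ coming from the comodule tensor coaction $\delta_{X\otimes Y} = (\mu_B \otimes \id_{X\otimes Y})\circ(\id_B \otimes c_{X,B}\otimes \id_Y)\circ(\delta_X \otimes \delta_Y)$, and then apply the third Hopf pairing axiom $\omega \circ (\id_A \otimes \mu_B) = \omega \circ (\id_A \otimes \omega \otimes \id_B)\circ(\Delta_A \otimes \id_{B\otimes B})$ to split the single $\omega$ into two, thereby introducing one copy of $\Delta_A$. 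This must be compared with the target action
\[
\rho_{X\otimes Y} = (\rho_X \otimes \rho_Y)\circ(\id_A \otimes \ov{c}_{A,X}\otimes \id_Y)\circ(\Delta_{A^{\cop}}\otimes \id_{X\otimes Y}),
\]
in which $\Delta_{A^{\cop}} = c^{-1}_{A,A}\circ\Delta_A$ and $\ov{c}_{A,X} = c^{-1}_{X,A}$. The subtle point is the bookkeeping of braidings: the splitting via the third axiom pairs the two coproduct legs of $a$ with the two $B$-legs in the \emph{reversed} order, which is exactly what the opposite coproduct $\Delta_{A^{\cop}}$ supplies, while the braiding $c_{X,B}$ used to build $\delta_{X\otimes Y}$, transported across $\omega$, is matched by the inverse braiding $\ov{c}_{A,X}$ on the module side. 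I expect that after the split, a single naturality-of-braiding move (sliding the $X$-strand past the relevant $A$- or $B$-strand, using a hexagon identity) identifies the two diagrams. This is where the choice of $A^{\cop}$ and of the inverse braiding $\ov{\cC}$ is simultaneously necessary and sufficient, and it is the main step to carry out carefully; everything else is routine.
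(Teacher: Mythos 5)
Your proposal is correct and follows essentially the same route as the paper: the first two Hopf pairing axioms give the $A^{\cop}$-module structure, the third axiom together with naturality of the braiding yields the strict compatibility with tensor products (including the reversed pairing of the coproduct legs, which is exactly why $\Delta_{A^{\cop}}$ and the inverse braiding appear), and the fourth axiom handles the monoidal unit. The only cosmetic remark is that the crossing-sliding step needs only naturality of the braiding, not a hexagon identity; otherwise your more detailed write-up matches the paper's terse argument step for step.
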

\begin{proof}
Keeping in mind that $A$ and $A^\cop$ are equal as algebras,
the first two conditions on a Hopf pairing imply
that ${}^\omega\sD(X, \delta)$ is an $A^\cop$-module.
To see that the functor ${}^\omega\sD$ is strict monoidal, 
we note the equality of comodules ${}^\omega\sD(X) \otimes
{}^\omega\sD(Y)={}^\omega\sD(X \otimes Y)$ in $\ov{\cC}$ 
for any two $B$-comodules $X$ and $Y$, which immediately 
follows from the naturality of the braiding and the third
condition on a Hopf pairing. Finally,
the equality ${}^\omega\sD(X) \otimes
{}^\omega\sD(\unit) = {}^\omega\sD(X) = {}^\omega\sD(\unit) \otimes
{}^\omega\sD(X)$
follows from the fourth condition for a Hopf pairing.
\end{proof}

\begin{remark}
\quad
\label{example:HopfPairings}
                        If $\omega : A \otimes B \rightarrow \unit$ is a Hopf
pairing, then also 
                        \[
                                \omega^+ := \omega \circ c_{B,A} \circ (S_B
\otimes S_A) : B \otimes A \rightarrow \unit
                        \]
                        and
                        \[
                                \omega^- := \omega \circ c^{-1}_{A,B} \circ
(S^{-1}_B \otimes S^{-1}_A) : B \otimes A \rightarrow \unit
                        \]
                        are Hopf pairings.
\end{remark}

\begin{definition}
A Hopf pairing $\omega : A \otimes B \rightarrow \unit$ is called
\emph{non-degenerate}, if there is a morphism $\omega' : \unit \rightarrow B
\otimes A$, such that
        \[
                (\omega \otimes \id_A ) \circ (\id_A \otimes \omega') = \id_A
\quad \text{and} \quad (\id_B \otimes \omega) \circ (\omega' \otimes \id_B) =
\id_B.
        \]
\end{definition}

\begin{remark}
\begin{enumerate}
\item
If $A$ and $B$ are Hopf algebras over a field $\Bbbk$ that are related
by a non-degenerate Hopf pairing, then both $A$ and $B$ are finite-dimensional.

\item
        If $\omega : A \otimes B \rightarrow \unit$ is non-degenerate, the
morphism $\omega' : \unit \rightarrow B \otimes A$ is unique.
        We call $\omega'$ the \emph{inverse copairing} of $\omega$.\\
        The inverse copairing of a Hopf pairing is a \emph{Hopf copairing}, i.e.
the following axioms are fulfilled:
        \begin{align*}
                (\Delta_B \otimes \id_A ) \circ \omega' &= (\id_{B \otimes B}
\otimes \mu_A ) \circ (\id_B \otimes \omega' \otimes \id_A ) \circ \omega',\\
                (\eps_B \otimes \id_A ) \circ \omega' &= \eta_A,\\
                (\id_B \otimes \Delta_A ) \circ \omega' &= (\mu_B \otimes \id_{A
\otimes A} ) \circ (\id_B \otimes \omega' \otimes \id_A ) \circ \omega',\\
                (\id_B \otimes \eps_A ) \circ \omega' &= \eta_B.
        \end{align*}
If the Hopf pairing $\omega : A \otimes B \rightarrow \unit$ is 
non-degenerate, then the Hopf pairings
$\omega^+, \omega^- : B \otimes A \rightarrow \unit$ are non-degenerate
as well.
        The inverse copairing of $\omega^+$ is the morphism $(S^{-1} \otimes
S^{-1}) \circ c^{-1}_{A,B} \circ \omega'$, 
        the inverse copairing of $\omega^-$ is the morphism $(S \otimes S) \circ
c_{B,A} \circ \omega'$.
\end{enumerate}
\end{remark}

\begin{lemma}
\label{lem:dualization_modules}
If $\omega : A \otimes B \rightarrow \unit$ is a 
non-degenerate Hopf pairing, the strict monoidal functor 
        \[
                {}^\omega \sD : \lComod{B}{\cC} \rightarrow \lMod{A^\cop}{\ov{\cC}}
        \]
        from Lemma \ref{lem:duality_functor} is an isomorphism of categories.
\end{lemma}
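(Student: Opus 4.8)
The plan is to produce an explicit inverse functor built from the inverse copairing $\omega' : \unit \to B \otimes A$ that non-degeneracy supplies. Since ${}^\omega\sD$ fixes the underlying object $X$ and the underlying morphism in $\cC$, replacing only a $B$-coaction $\delta : X \to B \otimes X$ by the $A^\cop$-action $\rho_\delta := (\omega \otimes \id_X) \circ (\id_A \otimes \delta)$, it is enough to show that on each fixed object the assignment $\delta \mapsto \rho_\delta$ is a bijection between $B$-coactions and $A^\cop$-actions, and that under this bijection $B$-colinearity of a morphism is equivalent to $A^\cop$-linearity. I would define the candidate inverse $\mathcal{E}$ on objects by sending an $A^\cop$-module $(X,\rho)$ to $\mathcal{E}(X,\rho) := (X, \delta_\rho)$ with $\delta_\rho := (\id_B \otimes \rho) \circ (\omega' \otimes \id_X)$, and on morphisms by $f \mapsto f$.

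First I would check that $\mathcal{E}$ lands in $\lComod{B}{\cC}$. Keeping in mind that $A$ and $A^\cop$ agree as algebras, coassociativity of $\delta_\rho$ follows from the first Hopf copairing axiom, which relates $(\Delta_B \otimes \id_A) \circ \omega'$ to a double application of $\omega'$ contracted by $\mu_A$; this axiom is precisely what turns the associativity of $\rho$ into the coassociativity of $\delta_\rho$. Counitality of $\delta_\rho$ comes from $(\eps_B \otimes \id_A) \circ \omega' = \eta_A$ together with the unit axiom for $\rho$. An entirely parallel computation shows that an $A^\cop$-linear morphism is $B$-colinear for the induced coactions, so $\mathcal{E}$ is a well-defined functor; this simultaneously supplies the backward direction of the claimed equivalence of intertwining conditions on morphisms.

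The heart of the argument is that $\mathcal{E}$ and ${}^\omega\sD$ are mutually inverse, and here the two non-degeneracy relations do all the work. For the composite coaction one expands
\begin{align*}
\delta_{\rho_\delta}
&= (\id_B \otimes \rho_\delta) \circ (\omega' \otimes \id_X) \\
&= (\id_B \otimes \omega \otimes \id_X) \circ (\omega' \otimes \delta),
\end{align*}
where the second equality is the interchange law; then the relation $(\id_B \otimes \omega) \circ (\omega' \otimes \id_B) = \id_B$ collapses the inserted $B \otimes A$ against the $B$-leg of $\delta$ and yields $\delta_{\rho_\delta} = \delta$. Symmetrically, for the composite action one slides $\omega$ past $\rho$ by interchange and then applies $(\omega \otimes \id_A) \circ (\id_A \otimes \omega') = \id_A$ to get $\rho_{\delta_\rho} = \rho$. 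Because both functors act as the identity on underlying morphisms, these object-level identities immediately give $\mathcal{E} \circ {}^\omega\sD = \id$ and ${}^\omega\sD \circ \mathcal{E} = \id$, so ${}^\omega\sD$ is invertible; its strict monoidality is already recorded in Lemma \ref{lem:duality_functor}.

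The main obstacle is not a single deep step but the careful bookkeeping of the second paragraph: one must match the Hopf copairing axioms for $\omega'$ — essentially the bialgebra axioms read backwards through the copairing — against the module axioms for $\rho$ in the correct order, and one must verify that the braiding of $\cC$ never enters these object-level manipulations (it contributes only through the already-established monoidal structure, which is why the target category carries the inverse braiding $\ov{\cC}$). Once $\omega'$ is handled with the same care that the proof of Lemma \ref{lem:duality_functor} devotes to $\omega$, the two snake identities make invertibility immediate.
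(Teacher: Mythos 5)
Your proposal is correct and follows essentially the same route as the paper: the paper's (brief) proof also defines the inverse functor ${}_{\omega'}\sD$ on objects by $(X,\rho)\mapsto (X,(\id_B\otimes\rho)\circ(\omega'\otimes\id_X))$, invokes the Hopf copairing axioms for functoriality, and uses the two non-degeneracy relations to conclude that ${}_{\omega'}\sD$ and ${}^\omega\sD$ are mutually inverse. You merely spell out the bookkeeping that the paper leaves implicit.
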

\begin{proof}
Use the copairing $\omega'$ to
define a functor ${}_{\omega'}\sD : \lMod{A^\cop}{\ov{\cC}} \rightarrow
\lComod{B}{\cC}$ by sending the module $(X, \rho)$ to the $B$-comodule
        \[
                (X, (\id_B \otimes \rho ) \circ (\omega' \otimes \id_X )).
        \]
The properties of a Hopf copairing imply
that ${}_{\omega'}\sD$ is a functor. The relation between
$\omega$ and $\omega'$ imply that the functors
${}_{\omega'}\sD$ and ${}^\omega \sD$ are inverses.
\end{proof}

\section{Yetter-Drinfel'd modules in braided categories}\label{sec:YDM}
Yetter-Drinfel'd modules or crossed modules \cite{Mont93,Kass95}
for Hopf algebras over a field 
have been generalized in \cite{Besp95} for Hopf algebras in a braided
category $\cC$.
In this section, we show that the dualization functor 
${}^\omega\sD$ from Lemma \ref{lem:dualization_modules} 
associated to a non-degenerate Hopf pairing extends to a
strict monoidal functor between categories of
Yetter-Drinfel'd modules. Moreover, we combine
the side switch functors $\sT^A$ and $\sT_A$ for modules and
comodules from Lemma
\ref{rem:twisting_functors} into a (non-strict) 
side switch functor  
$\sT$ for  Yetter-Drinfel'd modules. 

\subsection{The Yetter-Drinfel'd condition}
A Yetter-Drin\-fel'd module is a module and a comodule,
subject to a compatibility condition.
Actions and coactions can be on the left or right; thus
there are four different types of Yetter-Drinfel'd modules.
Our main result can be understood in terms of Yetter-Drinfel'd
modules with left action and left coaction; the other categories only 
serve as a tool in the proofs.

\begin{definition}
Let $A$ be a Hopf algebra in a braided category $\cC$; 
suppose that $X$ is a left or right module and comodule over $A$. 
The corresponding Yetter-Drinfel'd conditions are
depicted in Figure \ref{fig:YD_conditions}.
\end{definition}
\begin{figure}[h]
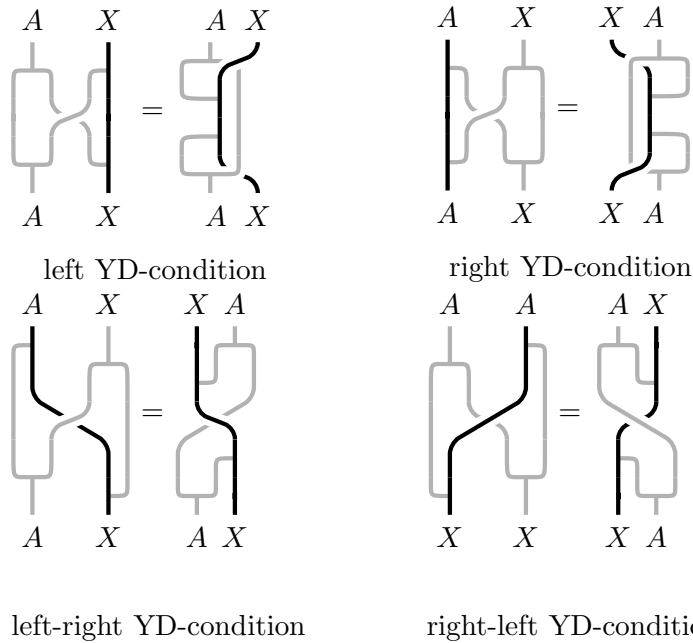

        \begin{minipage}{0.3\textwidth}
                \begin{grform}
                        \begin{scope}[scale = 0.5]
                                \dMult{0.5}{1.5}{1}{-1.5}{\grau}
                                \dMult{0.5}{2.5}{1}{1.5}{\grau}
                                \dAction{2.5}{1.5}{0.5}{-1.5}{\grau}{black}
                                \dAction{2.5}{2.5}{0.5}{1.5}{\grau}{black}
                                \vLine{0.5}{1.5}{0.5}{2.5}{\grau}
                                \vLine{2.5}{1.5}{1.5}{2.5}{\grau}
                                \vLineO{1.5}{1.5}{2.5}{2.5}{\grau}
                                \vLine{3}{1.5}{3}{2.5}{black}
                        \end{scope}
                        \draw (0.5 , -0.3) node {$A$};
                        \draw (0.5 , 2.3) node {$A$};
                        \draw (1.5 , -0.3) node {$X$};
                        \draw (1.5 , 2.3) node {$X$};
                \end{grform}
                =
                \begin{grform}
                        \begin{scope}[scale = 0.5]
                                \vLine{2}{0}{1}{1}{black}
                                \dMultO{0}{1}{1.5}{-1}{\grau}
                                \dAction{0}{1}{1}{1}{\grau}{black}
                                \dAction{0}{3}{1}{-1}{\grau}{black}
                                \dMult{0}{3}{1.5}{1}{\grau}
                                \vLineO{2}{4}{1}{3}{black}
                                \vLine{1.5}{1}{1.5}{3}{\grau}
                        \end{scope}
                        \draw (0.45 , -0.3) node {$A$};
                        \draw (0.45 , 2.3) node {$A$};
                        \draw (1 , -0.3) node {$X$};
                        \draw (1 , 2.3) node {$X$};
                \end{grform}
                \\
\begin{center}
\text{left YD-condition}
\end{center}
\end{minipage}
\qquad\qquad
\begin{minipage}{0.3\textwidth}
\begin{grform}
\begin{scope}[scale = 0.5]
\dMult{1.5}{2.5}{1}{1.5}{\grau}
\dMult{1.5}{1.5}{1}{-1.5}{\grau}
\dAction{0.5}{1.5}{-0.5}{-1.5}{\grau}{black}
\dAction{0.5}{2.5}{-0.5}{1.5}{\grau}{black}
\vLine{2.5}{1.5}{2.5}{2.5}{\grau}
\vLine{1.5}{1.5}{0.5}{2.5}{\grau}
\vLineO{0.5}{1.5}{1.5}{2.5}{\grau}
\vLine{0}{1.5}{0}{2.5}{black}
\end{scope}
\draw (0 , -0.3) node {$A$};
\draw (0 , 2.3) node {$A$};
\draw (1 , -0.3) node {$X$};
\draw (1 , 2.3) node {$X$};
\end{grform}
=
\begin{grform}
\begin{scope}[scale = 0.5]
\vLine{1}{3}{0}{4}{black}
\dMultO{0.5}{3}{1.5}{1}{\grau}
\dAction{2}{3}{-1}{-1}{\grau}{black}
\dAction{2}{1}{-1}{1}{\grau}{black}
\dMult{0.5}{1}{1.5}{-1}{\grau}
\vLineO{0}{0}{1}{1}{black}
\vLine{0.5}{1}{0.5}{3}{\grau}
\end{scope}
\draw (0.55 , -0.3) node {$A$};
\draw (0.55 , 2.3) node {$A$};
\draw (0 , -0.3) node {$X$};
\draw (0 , 2.3) node {$X$};
\end{grform}
\\
\begin{center}
\text{right YD-condition}
\end{center}
\end{minipage}
\begin{minipage}{0.3\textwidth}
\begin{grform}
\begin{scope}[scale = 0.5]
\dMult{0.5}{2}{1}{-2}{\grau}
\dMult{2.5}{3}{1}{2}{\grau}
\dAction{0.5}{4}{0.5}{1}{\grau}{black}
\dAction{3.5}{1}{-0.5}{-1}{\grau}{black}
\vLine{3}{1}{1}{4}{black}
\vLine{0.5}{2}{0.5}{4}{\grau}
\vLine{3.5}{1}{3.5}{3}{\grau}
\vLineO{1.5}{2}{2.5}{3}{\grau}
\end{scope}
\draw (0.5 , -0.3) node {$A$};
\draw (0.5 , 2.8) node {$A$};
\draw (1.5 , -0.3) node {$X$};
\draw (1.5 , 2.8) node {$X$};
\end{grform}
=
\begin{grform}
\begin{scope}[scale = 0.5]
\vLine{0.5}{1}{2.5}{4}{\grau}
\dMult{0.5}{1}{1}{-1}{\grau}
\dMult{1.5}{4}{1}{1}{\grau}
\dAction{1.5}{1}{0.5}{1}{\grau}{black}
\dAction{1.5}{4}{-0.5}{-1}{\grau}{black}
\vLineO{2}{2}{1}{3}{black}
\vLine{2}{0}{2}{1}{black}
\vLine{1}{4}{1}{5}{black}
\end{scope}
\draw (0.5 , -0.3) node {$A$};
\draw (0.5 , 2.8) node {$X$};
\draw (1 , -0.3) node {$X$};
\draw (1 , 2.8) node {$A$};
                \end{grform}
                \\
                \begin{center}
                        \text{left-right YD-condition}
                \end{center}
        \end{minipage}
        \qquad\qquad
        \begin{minipage}{0.3\textwidth}
                \begin{grform}
                        \begin{scope}[scale = 0.5]
                                \dMult{0.5}{3}{1}{2}{\grau}
                                \dMult{2.5}{2}{1}{-2}{\grau}
                                \dAction{0.5}{1}{0.5}{-1}{\grau}{black}
                                \dAction{3.5}{4}{-0.5}{1}{\grau}{black}
                                \vLine{2.5}{2}{1.5}{3}{\grau}
                                \vLine{0.5}{1}{0.5}{3}{\grau}
                                \vLine{3.5}{2}{3.5}{4}{\grau}
                                \vLineO{1}{1}{3}{4}{black}
                        \end{scope}
                        \draw (0.5 , -0.3) node {$X$};
                        \draw (0.5 , 2.8) node {$A$};
                        \draw (1.5 , -0.3) node {$X$};
                        \draw (1.5 , 2.8) node {$A$};
                \end{grform}
                =
                \begin{grform}
                        \begin{scope}[scale = 0.5]
                                \vLine{1}{2}{2}{3}{black}
                                \dMult{1.5}{1}{1}{-1}{\grau}
                                \dMult{0.5}{4}{1}{1}{\grau}
                                \dAction{1.5}{1}{-0.5}{1}{\grau}{black}
                                \dAction{1.5}{4}{0.5}{-1}{\grau}{black}
                                \vLineO{2.5}{1}{0.5}{4}{\grau}
                                \vLine{1}{0}{1}{1}{black}
                                \vLine{2}{4}{2}{5}{black}
                        \end{scope}
                        \draw (0.5 , -0.3) node {$X$};
                        \draw (0.5 , 2.8) node {$A$};
                        \draw (1 , -0.3) node {$A$};
                        \draw (1 , 2.8) node {$X$};
                \end{grform}
                \\
                \begin{center}
                        \text{right-left YD-condition}
                \end{center}
        \end{minipage}
        \caption{Yetter-Drinfel'd conditions}
        \label{fig:YD_conditions}
\end{figure}

The (left) Yetter-Drinfel'd modules over $A$ are objects of
a category $\YD{A}(\cC)$; morphisms in $\YD{A}(\cC)$ are 
morphisms in $\cC$ that are $A$-linear and $A$-colinear.\\
The tensor product of a Yetter-Drinfel'd module $X$ and a
Yetter-Drinfel'd module $Y$ is given by the object $X \otimes Y$
with the obvious action and coaction. The unit object is the 
monoidal unit $\unit$ of $\cC$, together with trivial
action given by the counit and trivial coaction given by
the unit of $A$.\\
        The braiding isomorphism 
        \[
                c^{\YD{}}_{X,Y} : X \otimes Y \rightarrow Y \otimes X
        \]
        is given by
        \begin{align}
        \label{YD-braiding}
                c^{\YD{}}_{X,Y} := (\rho_Y \otimes \id_X ) \circ (\id_A \otimes
c_{X,Y} ) \circ (\delta_X \otimes \id_Y )\,\, ;
        \end{align}
its inverse is
        \[
                (c^{\YD{}}_{X,Y})^{-1} := c^{-1}_{X,Y} \circ (\rho_Y \otimes
\id_X ) \circ (c^{-1}_{A,Y} \otimes \id_X )
                \circ (\id_Y \otimes S^{-1} \otimes \id_X ) \circ (\id_Y \otimes
\delta_X ).
\]

The structure is summarized in the following proposition
whose proof can be found in \cite{Besp95}.
\begin{proposition}
Let $A$ be a Hopf algebra in $\cC$. 
The left Yetter-Drinfel'd modules over $A$ in $\cC$ have
a natural structure of a
braided monoidal category $\YD{A}(\cC)$.
\end{proposition}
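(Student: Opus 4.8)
The plan is to establish three layers of structure on the Yetter-Drinfel'd modules and to verify them by graphical calculus. The first layer is immediate: the composite of two $A$-linear and $A$-colinear maps is again $A$-linear and $A$-colinear, and $\id_X$ has this property, so $\YD{A}(\cC)$ is a well-defined subcategory of $\cC$ whose objects are triples $(X,\rho_X,\delta_X)$ satisfying the left YD-condition of Figure~\ref{fig:YD_conditions}. The content of the proposition lies in the monoidal and the braided structure.

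For the monoidal layer, I would equip $X \otimes Y$ with the tensor-product action $\rho_{X\otimes Y}$ of $\lMod{A}{\cC}$ and the tensor-product coaction $\delta_{X\otimes Y}$ of $\lComod{A}{\cC}$ recalled above. The one genuinely new point is that this pair again satisfies the left YD-condition. This is a diagrammatic computation: expanding both sides of the YD-condition for $X \otimes Y$ and using the bialgebra axiom $\Delta \circ \mu = (\mu \otimes \mu)\circ(\id \otimes c_{A,A}\otimes\id)\circ(\Delta\otimes\Delta)$, coassociativity, the module and comodule axioms, and naturality of $c$, one reduces the identity to the separate YD-conditions for $X$ and for $Y$. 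Associativity of $\otimes$ and the unit constraints are then inherited from $\cC$ (identities in the strict case), and one checks that the associator and unitors of $\cC$ are automatically $A$-linear and $A$-colinear and that $(\unit,\eps,\eta)$ is a YD-module serving as the tensor unit; these are formal consequences of counitality and unitality.

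For the braided layer, I would first verify that the morphism $c^{\YD{}}_{X,Y}$ of \eqref{YD-braiding} is a morphism in $\YD{A}(\cC)$, i.e.\ $A$-linear and $A$-colinear. This is precisely the step where the YD-compatibility is indispensable, since it is what allows the action and coaction to be transported across the braiding $c_{X,Y}$. Naturality in $X$ and $Y$ follows from naturality of $c$ together with the (co)linearity of the morphisms involved, and invertibility is confirmed by checking that the displayed formula for $(c^{\YD{}}_{X,Y})^{-1}$ is a two-sided inverse; this uses the comodule axiom and the antipode relation $\mu\circ(S\otimes\id)\circ\Delta=\eta\circ\eps$, which is the only place the antipode rather than just the bialgebra structure enters. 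Finally the two hexagon axioms are verified diagrammatically: after inserting the definitions of $c^{\YD{}}$, $\rho_{X\otimes Y}$ and $\delta_{X\otimes Y}$, each hexagon collapses to an identity by coassociativity of $\delta$, associativity of $\rho$, and naturality of $c$.

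The \emph{main obstacle} is the interplay between action and coaction forced by the YD-condition: both the stability of the YD-condition under $\otimes$ and the $A$-colinearity of $c^{\YD{}}$ are the substantial calculations, and in each the difficulty is bookkeeping the order in which the bialgebra axiom and the YD-condition are applied so that the braidings of $\cC$ line up. Once $c^{\YD{}}$ is known to be a natural isomorphism of YD-modules, the hexagon identities are comparatively mechanical, reducing to the (co)associativity and (co)module axioms. Since all of these facts are established in \cite{Besp95}, I would present the argument as a guided diagrammatic verification rather than reproduce every picture.
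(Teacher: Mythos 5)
Your proposal is correct and coincides with the standard direct verification in \cite{Besp95}, to which the paper's own proof simply defers: stability of the left YD-condition under $\otimes$ via the bialgebra axiom, $A$-linearity and $A$-colinearity of $c^{\YD{}}_{X,Y}$ (the step genuinely requiring the YD-compatibility), its naturality and invertibility, and the hexagon identities collapsing to (co)associativity of the (co)action and naturality of $c$. One harmless imprecision: verifying that the displayed formula for $(c^{\YD{}}_{X,Y})^{-1}$ is a two-sided inverse uses the relation characterizing $S^{-1}$ as the antipode of $A^\op$ in $\ov{\cC}$ (in the vector-space case $S^{-1}(a_{(2)})a_{(1)}=\eps(a)1$) rather than literally $\mu\circ(S\otimes\id)\circ\Delta=\eta\circ\eps$, but this follows since Definition \nref{d:Hopf_algebra} requires $S$ invertible, and it is consistent with your correct observation --- echoed in the paper's remark after the proposition --- that the antipode enters only through invertibility of the braiding, so that YD-modules over a mere bialgebra still form a monoidal category.
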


\begin{remark}
The definition of Yetter-Drinfel'd module does not require
the existence of an antipode so that Yetter-Drinfel'd modules
can be defined over a bialgebra as well.

If $A$ is a Hopf algebra, the antipode allows us to
reformulate the Yetter-Drinfel'd condition: a graphical calculation shows
that a module and comodule $X$ is a left Yetter-Drinfel'd module,
iff
        \[
                \begin{grform}
                        \begin{scope}[scale = 0.5]
                                \dMult{1}{1}{2}{-1}{\grau}
                                \dMult{0.5}{3}{1}{-2}{\grau}
                                \dMult{1}{6}{2}{1}{\grau}
                                \dMult{0.5}{4}{1}{2}{\grau}
                                \dAction{2.5}{3}{0.5}{-1}{\grau}{black}
                                \dAction{2.5}{4}{0.5}{1}{\grau}{black}
                                \vLine{2.5}{3}{1.5}{4}{\grau}
                                \vLineO{1.5}{3}{2.5}{4}{\grau}
                                \vLine{4}{1}{3}{2}{black}
                                \vLineO{3}{1}{4}{2}{\grau}
                                \vLine{4}{5}{3}{6}{\grau}
                                \vLineO{3}{5}{4}{6}{black}
                                \vLine{0.5}{3}{0.5}{4}{\grau}
                                \vLine{3}{3}{3}{4}{black}
                                \vLine{4}{2}{4}{5}{\grau}
                                \vLine{4}{0}{4}{1}{black}
                                \vLine{4}{6}{4}{7}{black}
                                \dAntipode{4}{3}{\grau}
                        \end{scope}
                        \draw (1 , -0.3) node {$A$};
                        \draw (2 , -0.3) node {$X$};
                        \draw (1 , 3.8) node {$A$};
                        \draw (2 , 3.8) node {$X$};
                \end{grform}
                =
                \begin{grform}
                        \begin{scope}[scale = 0.5]
                                \dAction{0}{0}{2}{4}{\grau}{black}
                                \dAction{0}{7}{2}{-4}{\grau}{black}
                        \end{scope}
                        \draw (0 , -0.3) node {$A$};
                        \draw (1 , -0.3) node {$X$};
                        \draw (0 , 3.8) node {$A$};
                        \draw (1 , 3.8) node {$X$};
                \end{grform}
                .
        \]
\end{remark}
This reformulation is useful to prove the following lemma
which is proven by straightforward calculations:        

\begin{lemma}
\label{lem:properties_of_theta}
Let $A$ be a Hopf algebra in $\cC$. For a  
left Yetter-Drinfel'd $X$ consider
        \[
\theta_X := \rho_X \circ (S \otimes \id ) \circ \delta_X
\in\mathrm{End}_\cC(X).
        \]
        The following holds
        \begin{enumerate}
                \item
                        $\theta_X \circ \rho_X = \rho_X \circ c_{X,A} \circ
c_{A,X} \circ (S^2 \otimes \id)$.
                \item
                        $\delta_X \circ \theta_X = (S^2 \otimes \id) \circ
c_{X,A} \circ c_{A,X} \circ \delta_X$.
                \item
                        If $Y$ is another Yetter-Drinfel'd module, we have
                        \[
                                c^{\YD{}}_{Y,X} \circ \theta_{Y \otimes X} \circ
c^{\YD{}}_{X,Y} =
                                c_{Y,X} \circ (\theta_Y \otimes \theta_X ) \circ
c_{X,Y} \; .
                        \]
        \end{enumerate}
\end{lemma}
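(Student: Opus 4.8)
The plan is to derive all three identities from the antipode form of the Yetter-Drinfel'd condition stated in the Remark preceding the Lemma, together with the Hopf-algebra axioms and the anti-(co)multiplicativity of $S$ recorded in Remark \ref{rem:Antipode_as_homomorphism}. As the paper signals, each identity is a diagrammatic computation; I would prove (1) and (2) by hand and then bootstrap (3) from them.

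For (1), I would begin with $\theta_X\circ\rho_X=\rho_X\circ(S\otimes\id_X)\circ\delta_X\circ\rho_X$ and rewrite the inner composite $\delta_X\circ\rho_X$ using the reformulated left Yetter-Drinfel'd condition, which re-expresses the coaction on an acted element through $\mu_A$, $\Delta_A$, the antipode, the braiding and the original $\rho_X,\delta_X$. The antipode on the $A$-leg must then be transported across the resulting product; the decisive input is $S\circ\mu_A=\mu_A\circ c_{A,A}\circ(S\otimes S)$ (Remark \ref{rem:Antipode_as_homomorphism}), which is precisely what upgrades the antipode already produced by the Yetter-Drinfel'd condition to $S^2$. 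I would then cancel the adjacent $\mu_A\circ(S\otimes\id_A)\circ\Delta_A=\eta_A\circ\eps_A$ factor coming from the $A$-input, and check that the residual crossings reorganise, by naturality of $c$, into the double braiding $c_{X,A}\circ c_{A,X}$, so that $\theta_X\circ\rho_X$ reduces to the asserted right-hand side. Identity (2) is the top-to-bottom mirror of (1): reflecting the diagrams interchanges $\mu_A\leftrightarrow\Delta_A$ and $\rho_X\leftrightarrow\delta_X$, reverses the order of composition, and sends $\theta_X$ to a morphism of the same shape, so the computation is formally dual and I would simply run the argument for (1) in the mirrored picture (equivalently, in $\ov{\cC}$).

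For (3) I would expand $\theta_{Y\otimes X}$ through the tensor-product formulas $\delta_{Y\otimes X}=(\mu_A\otimes\id)\circ(\id_A\otimes c_{Y,A}\otimes\id_X)\circ(\delta_Y\otimes\delta_X)$ and $\rho_{Y\otimes X}=(\rho_Y\otimes\rho_X)\circ(\id_A\otimes c_{A,Y}\otimes\id_X)\circ(\Delta_A\otimes\id)$, and expand the two outer Yetter-Drinfel'd braidings through the defining formula \eqref{YD-braiding}. The aim is to factor the whole morphism as $\theta_Y\otimes\theta_X$ flanked by ordinary braidings of $\cC$. Here I expect (1) and (2) to be exactly the levers that let me commute $\theta_Y$ and $\theta_X$ past the remaining actions, coactions and comultiplications at the cost of controlled $S^2$-twists and double braidings; these surplus crossings must then cancel against the braidings produced by $c^{\YD{}}_{X,Y}$ and $c^{\YD{}}_{Y,X}$, leaving precisely $c_{Y,X}$ on the outside and $c_{X,Y}$ on the inside. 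As an organising sanity check I would keep in mind that $\theta$ is natural for $A$-linear, $A$-colinear maps and that $\theta_A=S^2$ for $A$ equipped with its adjoint action and regular coaction, which fixes the shape of the right-hand side.

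The main obstacle is the bookkeeping in (3). In contrast to (1) and (2), where a single application of the Yetter-Drinfel'd condition plus two antipode cancellations suffice, the tensor-product action and coaction contribute an extra $\mu_A$, an extra $\Delta_A$ and several crossings, while each of the two copies of $c^{\YD{}}$ unpacks into an action, a coaction and a braiding. Tracking which $A$-strand is consumed by which factor, and verifying that the residual crossings assemble \emph{exactly} into $c_{Y,X}$ and $c_{X,Y}$ rather than into some other composite of braidings, is the delicate step; I would manage it by expanding both sides fully and matching them strand-by-strand after the factors $\theta_Y$ and $\theta_X$ have been extracted by means of (1) and (2).
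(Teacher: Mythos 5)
You follow the route the paper itself indicates (the antipode reformulation of the Yetter--Drinfel'd condition plus graphical bookkeeping, with (2) as the mirror image of (1)), but the computation you sketch for (1) cannot terminate where you say it does, and the failure is exactly at the step you assert without executing. Substituting the reformulated condition into $\rho_X\circ(S\otimes\id)\circ\delta_X\circ\rho_X$ produces (in Sweedler notation for $\cC=\mathrm{vect}_\Bbbk$, to fix ideas) the $A$-factor $S\bigl(h_{(1)}\,x_{(-1)}\,S(h_{(3)})\bigr)=S^{2}(h_{(3)})\,S(x_{(-1)})\,S(h_{(1)})$ acting on $h_{(2)}\cdot x_{(0)}$. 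The cancellation $\mu\circ(S\otimes\id)\circ\Delta=\eta\circ\eps$ removes $S(h_{(1)})h_{(2)}$, as you say, but the factor $S(x_{(-1)})$ contributed by the coaction of $X$ survives and recombines with $x_{(0)}$ into $\theta_X$; nothing cancels it. What the calculation actually yields is $\theta_X\circ\rho_X=\rho_X\circ c_{X,A}\circ c_{A,X}\circ(S^{2}\otimes\theta_X)$, and dually $\delta_X\circ\theta_X=(S^{2}\otimes\theta_X)\circ c_{X,A}\circ c_{A,X}\circ\delta_X$, i.e.\ with an extra $\theta_X$ on the module strand that your narrative silently drops when it claims the residue ``reduces to the asserted right-hand side''. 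The $\theta$-free identities cannot be rescued: your own (correct) sanity check $\theta_A=S^{2}$ for $A$ with adjoint action and regular coaction turns (1) as stated into $S^{2}\bigl(h_{(1)}\,m\,S(h_{(2)})\bigr)=S^{2}(h_{(1)})\,m\,S^{3}(h_{(2)})$, which forces $S^{2}=\id_A$ and already fails for Sweedler's four-dimensional Hopf algebra. Note moreover that the places where the lemma is invoked (the $A$-linearity and $A$-colinearity of $\theta$ in Remark \ref{rem:side_switch_functor} and in Subsection \ref{prop:OmegaSquare}) need precisely the versions with the inner $\theta_X$: a morphism $\Omega^{\omega^-}\Omega^{\omega}(X)\to X$ must satisfy $\theta\circ\rho'=\rho\circ(\id_A\otimes\theta)$, not $\theta\circ\rho'=\rho$.

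This gap propagates to your treatment of (3). Identity (3) is true as stated (one can check it directly, e.g.\ for Yetter--Drinfel'd modules over a group algebra), so it cannot be a formal consequence of the $\theta$-free forms of (1) and (2), which are false; a derivation of a true statement from these levers would have to smuggle the missing $\theta$'s back in somewhere. With the corrected (1) and (2) your plan for (3) is sound: expanding $\theta_{Y\otimes X}$ through $\rho_{Y\otimes X}$ and $\delta_{Y\otimes X}$, pulling $\theta_Y$ and $\theta_X$ out at the cost of $S^{2}$-twists and double braidings, and absorbing these into the two $c^{\YD{}}$'s does close up --- this is the ``straightforward calculation'' the paper has in mind. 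The repair is therefore local: restore the missing $\theta_X$ on the right-hand sides of (1) and (2) (your computation produces it automatically) and then rerun the argument for (3); as written, however, the proposal asserts an endpoint that the very computation it describes refutes.
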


\begin{remark}
The right Yetter-Drinfel'd modules also form a braided mo\-no\-i\-dal
category, which is denoted by $\rYD{A}(\cC)$. The braiding is given by
        \[
                c^{\YD{}}_{X,Y} := (\id_Y \otimes \rho^r_X ) \circ (c_{X,Y}
\otimes \id_A ) \circ (\id_X \otimes \delta^r_Y ).
        \]
If $\cC$ is the category of vector spaces over a field $\Bbbk$, 
we also
write $\YD{A}_\Bbbk$ or $\YD{A}$ for the category of Yetter-Drinfel'd modules.
\end{remark}

\begin{remark}
The collection of left-right Yetter-Drinfel'd modules forms 
a category $\lrYD{A}{A}(\cC)$. In contrast to the categories 
$\YD{A}(\cC)$ and $\rYD{A}(\cC)$, the category 
$\lrYD{A}{A}(\cC)$ can be endowed with two different 
tensor products (leading to monoidally equivalent categories) 
\begin{enumerate}
\item
For the first product, $A$ acts as usual 
on the product of two Yetter-Drinfel'd modules, while the 
coaction is given by
        \[
                (\id_{X \otimes Y} \otimes \mu ) \circ (\id_X \otimes \delta^r_Y
\otimes \id_A ) \circ (\id_X \otimes c^{-1}_{Y,A}) \circ (\delta^r_X \otimes
\id_Y ). 
        \]
Since this is the diagonal coaction of the Hopf algebra $A^\op$ in
$\ov{\cC}$, we denote left-right
Yetter-Drinfel'd modules with this tensor product 
by $\lrYD{A}{A^\op}(\cC)$.
The monoidal category $\lrYD{A}{A^\op}(\cC)$ admits the braiding
        \[
                c^{\YD{}}_{X,Y} := (\id_Y \otimes \rho_X ) \circ (\delta^r_Y
\otimes \id_X ) \circ c^{-1}_{Y,X}.
        \]
\item
A second tensor product on $\lrYD{A}{A}(\cC)$ is given by 
the diagonal action of $A^\cop$ and the diagonal coaction of 
$A$ on tensor products of Yetter-Drinfel'd modules.
We denote this monoidal category by $\lrYD{A^\cop}{A}(\cC)$;
it admits the braiding
        \[
                c^{\YD{}}_{X,Y} := c^{-1}_{Y,X} \circ (\id_X \otimes \rho_Y )
\circ (\delta^r_X \otimes \id_Y ).
        \]
\end{enumerate}
The two braided categories $\rlYD{A^\cop}{A}(\cC)$ and
$\rlYD{A}{A^\op}(\cC)$ are defined in complete analogy.
\end{remark}

\subsection{Radford biproduct and projection theorem}

The following situation is standard: let $A$ be a Hopf
algebra over a field $\Bbbk$. Let $K$ be a
Hopf algebra in the braided category $\YD{A}$ of
$A$-Yetter-Drinfel'd modules.

The category of Yetter-Drinfel'd modules over $K$ in 
$\YD{A}$ can be described
as the category of Yetter-Drinfel'd modules over a 
Hopf algebra $K \rtimes A$ over the field $\Bbbk$.
The Hopf algebra $K \rtimes A$ is called \emph{Majid bosonization} or
\emph{Radford's biproduct}. 
The definition of the biproduct $K \rtimes A$ directly generalizes
to the description of Yetter-Drinfel'd modules over a Hopf 
algebra $K$ in the braided category $\YD{A}(\cC)$, where $\cC$ is 
now an arbitrary braided category.
We collect in this subsection results from \cite{Besp95} 
that we will be needed in the
construction of the partially dualized Hopf algebra in Section
\ref{sec:PartialDualization}.

\begin{definition}[Radford Biproduct]
\label{d:RadfordBiproduct}
Let $\cC$ be a braided category and
let $A\in\cC$ and $K\in\YD{A}(\cC)$ be Hopf algebras. The \emph{Radford biproduct} 
$K \rtimes A$ is defined as the object $K \otimes A$ in $\cC$
together with the following morphisms:
\[
\mu_{K \rtimes A} :=
\begin{grform}
\begin{scope}[scale = 0.5]
\dMult{0.5}{3}{2}{1}{black}
\dMult{3.5}{3}{1}{1}{\grau}
\dMult{1.5}{1}{1}{-1}{\grau}
\dAction{1.5}{2}{1}{1}{\grau}{black}
\vLine{0.5}{0}{0.5}{3}{black}
\vLine{3.5}{1}{2.5}{2}{black}
\vLine{3.5}{0}{3.5}{1}{black}
\vLineO{2.5}{1}{3.5}{2}{\grau}
\vLine{3.5}{2}{3.5}{3}{\grau}
\vLine{1.5}{1}{1.5}{2}{\grau}
\vLine{4.5}{0}{4.5}{3}{\grau}
\end{scope}
\draw (0.25 , -0.3) node {$K$};
\draw (1 , -0.3) node {$A$};
\draw (1.75 , -0.3) node {$K$};
\draw (2.25 , -0.3) node {$A$};
\end{grform}
, \quad \Delta_{K \rtimes A} :=
\begin{grform}
\begin{scope}[scale = 0.5]
\dMult{0.5}{1}{2}{-1}{black}
\dMult{3.5}{1}{1}{-1}{\grau}
\dMult{1.5}{3}{1}{1}{\grau}
\dAction{1.5}{2}{1}{-1}{\grau}{black}
\vLine{0.5}{4}{0.5}{1}{black}
\vLine{2.5}{3}{3.5}{2}{\grau}
\vLineO{3.5}{3}{2.5}{2}{black}
\vLine{3.5}{4}{3.5}{3}{black}
\vLine{3.5}{2}{3.5}{1}{\grau}
\vLine{1.5}{3}{1.5}{2}{\grau}
\vLine{4.5}{4}{4.5}{1}{\grau}
\end{scope}
\draw (0.75 , -0.3) node {$K$};
\draw (2 , -0.3) node {$A$};
\end{grform}
, \quad S_{K \rtimes A}
\begin{grform}
\begin{scope}[scale = 0.5]
\dMult{0}{2}{1}{1}{\grau}
\dAction{0}{1}{1}{-1}{\grau}{black}
\vLine{0}{1}{0}{2}{\grau}
\vLine{2}{0}{2}{1}{\grau}
\vLine{2}{2}{2}{3}{black}
\vLine{2}{1}{1}{2}{\grau}
\vLineO{1}{1}{2}{2}{black}
\dMult{0}{5}{1}{-1}{\grau}
\dAction{0}{6}{1}{1}{\grau}{black}
\vLine{0}{6}{0}{5}{\grau}
\vLine{2}{7}{2}{6}{\grau}
\vLine{2}{5}{2}{4}{black}
\vLine{1}{6}{2}{5}{black}
\vLineO{2}{6}{1}{5}{\grau}
\dAntipode{0.5}{3}{\grau}
\dAntipode{2}{3}{black}
\end{scope}
\draw (0.5 , -0.3) node {$K$};
\draw (1 , -0.3) node {$A$};
\end{grform}.
\]
\end{definition}

\begin{proposition}
\label{prop:RadfordBiproduct}
The Radford biproduct $K \rtimes A$ is a Hopf algebra in $\cC$.
\end{proposition}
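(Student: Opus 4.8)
The plan is to verify directly that the object $K \otimes A$, equipped with the structure morphisms $\mu_{K \rtimes A}$, $\Delta_{K \rtimes A}$, and $S_{K \rtimes A}$ given in Definition \ref{d:RadfordBiproduct}, satisfies all the axioms of a Hopf algebra in $\cC$ listed in Definition \ref{d:Hopf_algebra}. Since the unit and counit of a Hopf algebra are determined by the remaining structure, I would take $\eta_{K \rtimes A} := \eta_K \otimes \eta_A$ and $\eps_{K \rtimes A} := \eps_K \otimes \eps_A$, and check each axiom by a graphical calculation in the braided category $\cC$. The essential point throughout is that $K$ is not merely a Hopf algebra in $\cC$ but a Hopf algebra in the braided category $\YD{A}(\cC)$, so its multiplication and comultiplication are morphisms of Yetter-Drinfel'd modules; this means they are $A$-linear and $A$-colinear, and moreover the compatibility between $\mu_K$ and $\Delta_K$ is expressed using the braiding $c^{\YD{}}$ of $\YD{A}(\cC)$ from \eqref{YD-braiding}, not the braiding $c$ of $\cC$.

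First I would establish that $(K \otimes A, \mu_{K \rtimes A}, \eta_{K \rtimes A})$ is an associative unital algebra. Here associativity follows from the associativity of $\mu_K$ and $\mu_A$ together with the fact that the $A$-action $\rho_K$ makes $K$ a module-algebra: the action is compatible with $\mu_K$ and $\eta_K$, and the module axiom $\rho_K \circ (\mu_A \otimes \id_K) = \rho_K \circ (\id_A \otimes \rho_K)$ lets one slide $A$-strands past the $K$-multiplication. Dually, I would show $(K \otimes A, \Delta_{K \rtimes A}, \eps_{K \rtimes A})$ is a coassociative counital coalgebra, using coassociativity of $\Delta_K$ and $\Delta_A$ and the fact that the $A$-coaction $\delta_K$ makes $K$ a comodule-coalgebra. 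These two parts are formally dual, so the second follows from the first by reversing diagrams and exchanging the roles of multiplication/comultiplication, action/coaction, and $A \leftrightarrow A^{\cop}$, $K \leftrightarrow K^{\cop}$.

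The main obstacle, and the heart of the proof, is the bialgebra axiom $\Delta_{K \rtimes A} \circ \mu_{K \rtimes A} = (\mu_{K \rtimes A} \otimes \mu_{K \rtimes A}) \circ (\id \otimes c_{K \otimes A, K \otimes A} \otimes \id) \circ (\Delta_{K \rtimes A} \otimes \Delta_{K \rtimes A})$. This is where the interplay between the two braidings becomes delicate: the braiding $c_{K \otimes A, K \otimes A}$ on the right-hand side must be decomposed into the braidings of $\cC$ acting on the individual tensor factors $K$ and $A$, and the resulting diagram has to be manipulated until it matches the composite of $\mu_{K \rtimes A}$ followed by $\Delta_{K \rtimes A}$. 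The key inputs are that $\mu_K$ and $\Delta_K$ are morphisms in $\YD{A}(\cC)$ (so the Yetter-Drinfel'd compatibility of Figure \ref{fig:YD_conditions} can be invoked to commute $A$-strands past them), that $K$ is a bialgebra with respect to the braiding $c^{\YD{}}$, and that $\rho_K$, $\delta_K$ satisfy the module-algebra and comodule-coalgebra compatibilities. I expect the bulk of the graphical manipulation to consist of repeatedly applying these compatibilities to move antipode-free $A$-strands and to convert occurrences of $c^{\YD{}}$ into $c$ via formula \eqref{YD-braiding}.

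Finally I would verify the antipode axiom for $S_{K \rtimes A}$, namely $\mu_{K \rtimes A} \circ (S_{K \rtimes A} \otimes \id) \circ \Delta_{K \rtimes A} = \eta_{K \rtimes A} \circ \eps_{K \rtimes A}$ and its mirror image. Given the explicit diagram for $S_{K \rtimes A}$, which interweaves $S_K$, $S_A$, $\rho_K$, and $\delta_K$, this amounts to a graphical cancellation: the antipode property of $S_K$ inside $\YD{A}(\cC)$ together with the antipode property of $S_A$ and the module/comodule axioms collapse the composite to the trivial morphism $\eta \circ \eps$. Invertibility of $S_{K \rtimes A}$ follows from the invertibility of $S_K$ and $S_A$, or alternatively can be read off once one knows the biproduct is a finite-dimensional-type Hopf object; in the braided setting I would exhibit the inverse explicitly by the analogous diagram built from $S_K^{-1}$ and $S_A^{-1}$. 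Since all of these computations are entirely graphical and appear in \cite{Besp95}, I would reference that source for the detailed diagrams and present only the structural outline here.
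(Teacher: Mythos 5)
Your proposal is correct and follows essentially the same route as the paper: the paper gives no argument of its own but refers to \cite[Subsection 4.1]{Besp95}, where precisely the direct graphical verification you outline (algebra and coalgebra structure from the module-algebra/comodule-coalgebra properties, the bialgebra axiom via the Yetter-Drinfel'd condition and the conversion of $c^{\YD{}}$ into $c$, and the antipode axioms) is carried out. Since you also defer the detailed diagrams to \cite{Besp95}, your outline matches the paper's treatment.
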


Definition \ref{d:RadfordBiproduct} and Proposition \ref{prop:RadfordBiproduct} are found in \cite[Subsection 4.1]{Besp95}.

\begin{remark}
If $K$ is a Hopf algebra in the category $\YD{A}$ of Yetter-Drinfel'd modules over a $\Bbbk$-Hopf algebra $A$,
the Radford biproduct is given by the following formulas for multiplication and
comultiplication, cf. \cite[Section 10.6]{Mont93}:
\begin{align*}
(h\otimes a)\cdot (k \otimes b) &= h\cdot (a_{(1)}.k) \otimes a_{(2)}\cdot b\\
\Delta(h \otimes a) &= h_{(1)} \otimes (h_{(2)})_{(-1)}\cdot a_{(1)} \otimes (h_{(2)})_{(0)} \otimes a_{(2)}.
\end{align*}
This is a special case of the formulas expressed graphically
in Definition \ref{d:RadfordBiproduct}.
If $K$ has only the structure of an algebra in $\YD{A}$, 
the vector space $K \otimes A$ with the multiplication 
$\mu_{K \rtimes A}$
is called a \emph{smash-product}.
If $K$ has only the structure of a coalgebra in $\YD{A}$, 
$K \otimes A$ with the comultiplication $\Delta_{K \rtimes A}$
is called a \emph{cosmash-product}.
\end{remark}

\begin{theorem}[Radford projection theorem]
\label{thm:RadfordProjection}
Let $H$ and $A$ be Hopf algebras in a braided category $\cC$. 
Let $\pi : H \rightarrow A$ and
$\iota : A \rightarrow H$ be Hopf algebra morphisms 
such that $\pi \circ \iota = \id_A$.
If $\cC$ has equalizers and $A \otimes \us$ preserves 
equalizers, there is a
Hopf algebra $K$ in the braided category $\YD{A}(\cC)$, such that
\[
H = K \rtimes A.
\]
\end{theorem}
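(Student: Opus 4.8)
The plan is to define the Hopf algebra $K$ as a suitable equalizer built from $H$, and then verify that it carries a Yetter-Drinfel'd structure over $A$ realizing $H$ as the biproduct. First I would construct the projection onto coinvariants. Using the splitting $\pi\circ\iota=\id_A$, define the idempotent-like morphism $\Pi := \mu_H\circ(\id_H\otimes(\iota\circ S_A\circ\pi))\circ\Delta_H : H\to H$, which is the braided analogue of $h\mapsto h_{(1)}\,\iota S_A\pi(h_{(2)})$. The object $K$ is then defined as the equalizer of $\Pi$ and $\eta_H\circ\eps_H$ (equivalently, the ``image'' of $\Pi$); the hypothesis that $\cC$ has equalizers guarantees that $K$ exists as an object of $\cC$, and there is a canonical morphism $j:K\to H$. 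I would dually characterize $K$ as the equalizer of the coinvariants condition $(\id_H\otimes\pi)\circ\Delta_H$ and $\id_H\otimes\eta_A$, checking that both descriptions agree; the condition that $A\otimes\us$ preserves equalizers is exactly what is needed to manipulate these coinvariants under tensoring with $A$.

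Next I would equip $K$ with its Yetter-Drinfel'd structure over $A$ in $\cC$. The $A$-action is the adjoint action transported through $\iota$: diagrammatically $\rho_K := \mu_H\circ(\mu_H\otimes(\iota S_A))\circ(\iota\otimes\id_H\otimes\iota)\circ(\id_A\otimes c_{A,H})\circ(\Delta_A\otimes j)$, landing in $K$ after postcomposition with $\Pi$. The $A$-coaction is $\delta_K := (\pi\otimes\id_H)\circ\Delta_H\circ j$, again corestricted to $K$. I would verify that these land in $A\otimes K$ and $K$ respectively — this is where preservation of equalizers by $A\otimes\us$ is used — and that together they satisfy the left YD-condition of Figure~\ref{fig:YD_conditions}. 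The Hopf structure on $K$ is then the one induced from $H$: the multiplication is $\mu_K := \Pi\circ\mu_H\circ(j\otimes j)$, the comultiplication is the ``projected'' coproduct $\Delta_K := (\Pi\otimes\Pi)\circ\Delta_H\circ j$, with unit, counit, and antipode inherited analogously. Checking that $(K,\mu_K,\Delta_K)$ is a Hopf algebra \emph{in the braided category} $\YD{A}(\cC)$ — i.e. that its structure morphisms are $A$-linear and $A$-colinear and that the bialgebra axiom holds with respect to the YD-braiding $c^{\YD{}}$ of \eqref{YD-braiding} rather than the braiding of $\cC$ — is the technical heart of the argument and the step I expect to be the main obstacle.

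Finally I would establish the isomorphism $H\cong K\rtimes A$. The candidate map is $\Phi := \mu_H\circ(j\otimes\iota):K\otimes A\to H$, with inverse built from $\Delta_H$ followed by $\Pi\otimes(\iota S_A\pi)$, namely $\Phi^{-1} := (\Pi\otimes\pi)\circ\Delta_H$ read into $K\otimes A$. I would check that $\Phi$ and $\Phi^{-1}$ are mutually inverse morphisms in $\cC$ using the antipode axioms and the splitting $\pi\iota=\id_A$, and then verify that $\Phi$ intertwines the biproduct multiplication and comultiplication of Definition~\ref{d:RadfordBiproduct} with those of $H$. Here the smash-product formula for $\mu_{K\rtimes A}$ matches $\mu_H$ precisely because $\rho_K$ was defined as the transported adjoint action, and the biproduct coproduct matches $\Delta_H$ because $\delta_K$ was defined via $(\pi\otimes\id)\Delta_H$; these two compatibilities are essentially forced by the definitions, so once the YD- and Hopf-axioms for $K$ are secured, the isomorphism statement follows by a diagram chase rather than by any further essential idea. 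Throughout, I would lean on the graphical calculus to reduce the verifications to isotopies of string diagrams, and on Remark~\ref{rem:Antipode_as_homomorphism} to handle the interplay of $S_H$, $S_A$, and $S^{-1}$.
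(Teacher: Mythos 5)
The paper does not prove this theorem itself: the proof given is a citation to \cite{AF00}, with Remark \ref{rem:projection_thm} recording the resulting formulas in the vector-space case. Your plan is exactly the standard Radford--Majid argument in its braided form that \cite{AF00} carries out, and your structure maps agree with the paper's: your coaction is the paper's $\delta_K(k)=\pi(k_{(1)})\otimes k_{(2)}$; your $(\Pi\otimes\Pi)\circ\Delta_H\circ j$ equals the paper's $\Delta_K(k)=k_{(1)}\pi(S_H(k_{(2)}))\otimes k_{(3)}$, since $\pi\circ S_H=S_A\circ\pi$ and the second leg of $(\Pi\otimes\id)\Delta_H j$ is already coinvariant, so the second $\Pi$ acts as the identity; and your $\Phi=\mu_H\circ(j\otimes\iota)$, $\Phi^{-1}=(\Pi\otimes\pi)\circ\Delta_H$ are the standard mutually inverse maps. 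You also correctly locate the genuine technical burden (checking $A$-(co)linearity of the structure maps and the bialgebra axiom with respect to $c^{\YD{}}$ rather than $c$) and correctly identify that the hypothesis on $A\otimes\us$ preserving equalizers is what permits the corestriction of $\delta_K$.

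Two local errors should be repaired. First, $K$ is \emph{not} the equalizer of $\Pi$ and $\eta_H\circ\eps_H$: that equalizer consists of elements with $\Pi(h)=\eps(h)1$, and already over a field it contains $\iota(A)$ while excluding the coinvariants (for coinvariant $k$ one has $\Pi(k)=k$, not $\eps(k)1$). The image of the idempotent $\Pi$ is the equalizer of $\Pi$ and $\id_H$; your second characterization, as the equalizer of $(\id_H\otimes\pi)\circ\Delta_H$ and $\id_H\otimes\eta_A$ under $H\cong H\otimes\unit$, is the correct one and is what the rest of your plan actually uses, so the slip is harmless once fixed --- but your claim that the two stated descriptions agree would fail as written. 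Second, your formula for $\rho_K$ does not typecheck: after $(\id_A\otimes c_{A,H})\circ(\Delta_A\otimes j)$ the third tensor factor is an $A$, yet you embed it with $\iota$ and then apply $\iota\circ S_A$ to the resulting $H$. The intended adjoint action is $\rho_K=\mu_H\circ(\mu_H\otimes\id_H)\circ\bigl(\iota\otimes j\otimes(\iota\circ S_A)\bigr)\circ(\id_A\otimes c_{A,K})\circ(\Delta_A\otimes\id_K)$, corestricted to $K$. With these two repairs, your outline is a faithful blueprint of the cited proof.
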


\begin{proof}
For a complete proof we refer to \cite{AF00}.
\end{proof}

\begin{remark}
\label{rem:projection_thm}
To illustrate the situation, we discuss the case when
$\cC$ is the braided category of $\Bbbk$-vector spaces
and $\pi : H \rightarrow A$ is a projection to a Hopf subalgebra $A \subset
H$:\\
The vector space underlying the Hopf algebra $K$ in $\YD{A}$ is then
the space of coinvariants of $H$:
\[
K := H^{\rm{coin}(\pi)} := \left\{ r \in H \mid r_{(1)} \otimes \pi(r_{(2)}) = r \otimes 1 \right\} \,\, .
\]
One easily checks that $K$ is a subalgebra of $H$ and $K$ is invariant under
the left adjoint action of $A$ on $H$.\\
The subspace $K$ is also a left $A$-comodule with 
coaction $\delta_K(k):= \pi(k_{(1)}) \otimes k_{(2)}$. 
The fact that $H$ is a left $H$-Yetter-Drinfel'd module with the 
adjoint action and regular coaction implies that
$K$ is even an $A$-Yetter-Drinfel'd module.
The comultiplication of $K$ is given by the formula 
\[
\Delta_K(k) := k_{(1)}\pi(S_H(k_{(2)})) \otimes  k_{(3)}
\] 
and the antipode is $S_K(k) = \pi(k_{(1)})S_H(k_{(2)})$.
\end{remark}

\begin{theorem}[Bosonization Theorem]
\label{thm:RadfordYDCategories}
Let $A$ be a Hopf algebra in $\cC$ and $K$ a Hopf algebra in $\YD{A}(\cC)$.
There is an obvious
isomorphism of braided categories
\[
\YD{K \rtimes A}(\cC) \cong \YD{K}(\YD{A}(\cC )).
\]
\end{theorem}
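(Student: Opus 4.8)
The plan is to build the isomorphism explicitly from the two canonical Hopf algebra morphisms attached to the biproduct: the inclusion $\iota_A := \eta_K \otimes \id_A : A \to K \rtimes A$ and the projection $\pi_A := \eps_K \otimes \id_A : K \rtimes A \to A$, which satisfy $\pi_A \circ \iota_A = \id_A$, together with the (non-Hopf) maps $\iota_K := \id_K \otimes \eta_A$ and $\pi_K := \id_K \otimes \eps_A$ on the $K$-factor. Given a Yetter-Drinfel'd module $(X, \rho_X, \delta_X)$ over $K \rtimes A$, I would extract four structure morphisms on the underlying object $X \in \cC$: an $A$-action $\rho^A_X := \rho_X \circ (\iota_A \otimes \id_X)$ and $A$-coaction $\delta^A_X := (\pi_A \otimes \id_X) \circ \delta_X$, and a $K$-action $\rho^K_X := \rho_X \circ (\iota_K \otimes \id_X)$ and $K$-coaction $\delta^K_X := (\pi_K \otimes \id_X) \circ \delta_X$. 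This is the ``obvious'' functor $\Phi : \YD{K \rtimes A}(\cC) \to \YD{K}(\YD{A}(\cC))$ on objects; on morphisms it is the identity, since $(K\rtimes A)$-(co)linearity restricts to $A$- and $K$-(co)linearity and conversely.

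The verification splits into three layers. First, because $\iota_A, \pi_A$ are Hopf morphisms with $\pi_A \circ \iota_A = \id_A$, the pair $(\rho^A_X, \delta^A_X)$ makes $X$ an object of $\YD{A}(\cC)$; this is the standard pullback of a Yetter-Drinfel'd structure along a Hopf projection with section. Second --- and this is the heart of the argument --- one must show that $\rho^K_X$ and $\delta^K_X$ are morphisms in $\YD{A}(\cC)$, that is, $A$-linear and $A$-colinear for the $A$-Yetter-Drinfel'd structures on $K$ and on $X$. The required $A$-linearity of $\rho^K_X$ is precisely an avatar of the smash-product relation $(\unit \otimes a)(k \otimes \unit) = (a_{(1)}.k) \otimes a_{(2)}$ built into $\mu_{K \rtimes A}$ of Definition \ref{d:RadfordBiproduct}, combined with associativity of $\rho_X$; dually, the $A$-colinearity of $\delta^K_X$ is read off the cosmash coproduct $\Delta_{K \rtimes A}$. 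Third, one checks that $(X, \rho^K_X, \delta^K_X)$ satisfies the Yetter-Drinfel'd condition over $K$ in the braided category $\YD{A}(\cC)$: substituting the factorizations of $\mu_{K \rtimes A}$ and $\Delta_{K \rtimes A}$ into the single $(K \rtimes A)$-Yetter-Drinfel'd condition and separating the $K$- and $A$-strands, the $A$-strands reproduce the first layer while the $K$-strands, after the ambient braiding $c_\cC$ has been reorganized into the braiding $c^{\YD{}}$ of $\YD{A}(\cC)$, yield exactly the $K$-Yetter-Drinfel'd condition in $\YD{A}(\cC)$.

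For the inverse functor $\Psi$, given $Y \in \YD{K}(\YD{A}(\cC))$ with its $A$-structure $\rho^A_Y, \delta^A_Y$ and its $K$-structure $\rho^K_Y, \delta^K_Y$, I would reconstruct the $(K \rtimes A)$-action and -coaction by reading Definition \ref{d:RadfordBiproduct} backwards: the action is $\rho_Y := \rho^K_Y \circ (\id_K \otimes \rho^A_Y)$ (``act by $A$, then by $K$''), whose module axiom holds precisely because $\rho^K_Y$ is $A$-linear; the coaction is assembled from $\delta^K_Y$ and $\delta^A_Y$ following the cosmash coproduct of $K \rtimes A$ --- so that the $A$-coaction of the emitted $K$-component feeds into the $A$-slot --- with the $A$-colinearity of $\delta^K_Y$ guaranteeing the comodule axiom. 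That $\Phi$ and $\Psi$ are mutually inverse then follows from the splitting identities $\eps_A \circ \eta_A = \id_\unit$, $\eps_K \circ \eta_K = \id_\unit$ and the defining relations of $K \rtimes A$. Finally, for the braided statement I would compare the two instances of the Yetter-Drinfel'd braiding \eqref{YD-braiding}: expanding the braiding of $\YD{K \rtimes A}(\cC)$ through $\rho_Y, \delta_Y$ and $c_\cC$, and expanding the braiding of $\YD{K}(\YD{A}(\cC))$ through $\rho^K_Y, \delta^K_Y$ and the braiding $c^{\YD{}}$ of $\YD{A}(\cC)$ (which itself unfolds into $\rho^A_Y, \delta^A_Y, c_\cC$), both reduce to the same morphism of $\cC$, since the coproduct of $K \rtimes A$ interleaves the $K$- and $A$-coactions exactly through $c_\cC$.

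I expect the main obstacle to be the bookkeeping in the second and third layers: tracking how the ambient braiding $c_\cC$ is absorbed into the braiding $c^{\YD{}}$ of $\YD{A}(\cC)$ as the $K$- and $A$-strands are separated. This is the step where the compatibility of the smash and cosmash structures with the Yetter-Drinfel'd braiding is genuinely tested, and which requires the graphical calculus rather than formal manipulation.
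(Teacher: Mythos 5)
Your proposal is correct and is essentially the proof the paper itself points to: the paper gives no in-text argument for Theorem \ref{thm:RadfordYDCategories} but defers to \cite[Proposition 4.2.3]{Besp95}, where precisely this ``obvious'' isomorphism is carried out --- the identity on underlying objects and morphisms, with the $A$-structures obtained by (co)restriction along the Hopf morphisms $\iota_A, \pi_A$, the $K$-structures extracted via $\iota_K, \pi_K$, and the $(K \rtimes A)$-structures reassembled from Definition \ref{d:RadfordBiproduct} for the inverse. You have also correctly located the only genuinely delicate point, namely that the ambient braiding $c_\cC$ together with the $A$-coaction on $K$ hidden in $\mu_{K\rtimes A}$ and $\Delta_{K\rtimes A}$ must be reorganized into the braiding $c^{\YD{}}$ of $\YD{A}(\cC)$ when the $K$- and $A$-strands are separated, which is what makes the correspondence compatible with tensor products and braidings.
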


For a proof, we refer to \cite[Proposition 4.2.3]{Besp95}.

\subsection{Equivalence of categories of left and right modules}
In this subsection we discuss the side switch functor $\sT :
\rYD{A}(\cC) \rightarrow \YD{A}(\cC)$ for Yetter-Drinfel'd
modules. It turns out that, for our purposes,
a non-trivial monoidal structure 
$\sT_2 : \sT \otimes \sT \rightarrow \sT \circ
\otimes$ has
to be chosen for the switch functor, even in those cases
(for $\cC$ symmetric) where the
identities provide a monoidal structure on $\sT$.

\begin{lemma}
\label{lem:partial_side_switch_functor}
The isomorphism ${}^A\sT : \lComod{A}{\cC} \rightarrow \rComod{\ov{\cC}}{A^\cop}$ of
categories from 
        Lemma \ref{lem:twisting_functors} extends to an isomorphism of
categories
        \[
                {}^A\sT : \YD{A}(\cC) \rightarrow
\lrYD{A^\cop}{A^\cop}(\ov{\cC}).
        \]
The functor ${}^A\sT$ is braided and strict monoidal, 
considered as a functor between the following monoidal
categories:
        \[
                {}^A\sT : \YD{A}(\cC) \rightarrow
\lrYD{(A^\cop)^\cop}{A^\cop}(\ov{\cC}).
        \]
\end{lemma}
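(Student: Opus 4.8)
The plan is to define ${}^A\sT$ on objects by retaining the action and dualizing only the coaction, ${}^A\sT(X,\rho,\delta) := (X,\rho,c^{-1}_{X,A}\circ\delta)$, and by the identity on morphisms. On the module side there is nothing to prove: since $A$ and $A^\cop$ share the same multiplication and unit, and the single-object module axioms do not involve the braiding, the morphism $\rho$ is at once a left $A$-action in $\cC$ and a left $A^\cop$-action in $\ov\cC$, and a morphism is $A$-linear exactly when it is $A^\cop$-linear. The coaction, and $A$-colinearity, are already governed by Lemma \ref{lem:twisting_functors}, which makes $\delta^r := c^{-1}_{X,A}\circ\delta$ a right $A^\cop$-comodule in $\ov\cC$. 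Hence the only object-level content is the compatibility of $\rho$ with $\delta^r$.

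This compatibility is the heart of the proof. Starting from the left Yetter-Drinfel'd condition of Figure \ref{fig:YD_conditions} for $(X,\rho,\delta)$, I would substitute $\delta = c_{X,A}\circ\delta^r$ and transport the resulting braidings through the diagram by naturality of $c$. Using that $A^\cop$ has comultiplication $c^{-1}_{A,A}\circ\Delta$ and that the braiding of $\ov\cC$ is $c^{-1}$, the identity reorganizes into precisely the left-right Yetter-Drinfel'd condition over $A^\cop$ in $\ov\cC$ of Figure \ref{fig:YD_conditions}. As each move is reversible, the left YD condition over $A$ holds if and only if the left-right YD condition over $A^\cop$ does. This is the step I expect to be the main obstacle; everything else is formal. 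In particular, invertibility is then immediate: $\sT^{A^\cop}$ of Remark \ref{rem:twisting_functors} inverts ${}^A\sT$ on coactions, the action is reinterpreted by the identity, and the "if and only if" guarantees the inverse lands back in $\YD{A}(\cC)$. Thus ${}^A\sT:\YD{A}(\cC)\to\lrYD{A^\cop}{A^\cop}(\ov\cC)$ is an isomorphism of categories, the subscript being irrelevant at the level of objects and morphisms.

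For the monoidal statement I would check strict monoidality factor by factor. The coaction factor is exactly the strict monoidality of ${}^A\sT$ on comodules from Lemma \ref{lem:twisting_functors}: the diagonal right $A^\cop$-coaction on ${}^A\sT(X)\otimes{}^A\sT(Y)$ equals $c^{-1}_{X\otimes Y,A}\circ\delta_{X\otimes Y}$, which is what the superscript $A^\cop$ records. For the action factor, note that $(A^\cop)^\cop = A$ as a Hopf algebra in $\ov{\ov\cC} = \cC$, the same bookkeeping as in Remark \ref{rem:twisting_functors}; consequently the diagonal action of $(A^\cop)^\cop$ prescribed by the target is literally the diagonal $A$-action of $\cC$, that is, the action already borne by $X\otimes Y$ in $\YD{A}(\cC)$. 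As ${}^A\sT$ leaves actions untouched, the two agree on the nose, and the monoidal unit is preserved since $c^{-1}_{\unit,A} = \id$ leaves the trivial coaction $\eta$ unchanged. This is precisely why the subscript must be written $(A^\cop)^\cop$ rather than $A$: it signals that the action-diagonal uses the braiding $c$ of $\cC$, not $\ov c$.

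Finally, to see that ${}^A\sT$ is braided, I would rewrite the left-right Yetter-Drinfel'd braiding of the target, expressed through $\rho$, $\delta^r$ and $\ov c$, using $\delta^r = c^{-1}_{X,A}\circ\delta$ and $\ov c = c^{-1}$. A short manipulation with naturality of $c$ collapses it to $(\rho_Y\otimes\id_X)\circ(\id_A\otimes c_{X,Y})\circ(\delta_X\otimes\id_Y)$, which is the braiding \eqref{YD-braiding} of $\YD{A}(\cC)$; since ${}^A\sT$ is the identity on underlying morphisms, $c^{\YD{}}_{X,Y}$ is thereby carried to the target braiding. This last computation is considerably shorter than the Yetter-Drinfel'd compatibility of the second paragraph, which carries the real weight of the argument.
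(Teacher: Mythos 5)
Your proposal is correct and takes essentially the same route as the paper: there, too, the module and comodule structures are reduced to Lemma \ref{lem:twisting_functors}, the core of the proof is exactly your substitution-and-naturality verification that the left Yetter-Drinfel'd condition over $A$ becomes the left-right condition over $A^\cop$ in $\ov{\cC}$ (carried out in the paper as a chain of graphical identities), and the proof concludes by checking that the braidings of $\YD{A}(\cC)$ and $\lrYD{(A^\cop)^\cop}{A^\cop}(\ov{\cC})$ coincide as morphisms in $\cC$, which your naturality collapse establishes. Your factor-by-factor account of strict monoidality, with the $(A^\cop)^\cop$ bookkeeping for the diagonal action, correctly fills in what the paper leaves implicit.
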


\begin{remark}
The equality $(A^\cop)^\cop = A$ of Hopf algebras from Remark
\ref{rem:Antipode_as_homomorphism} might suggest the notation
\[
\lrYD{A}{A^\cop}(\ov{\cC}) := \lrYD{(A^\cop)^\cop}{A^\cop}(\ov{\cC})
\]
which is not in conflict with other notation used in this
article. To avoid confusion with the
different monoidal category $\lrYD{A}{A^\op}(\cC)$,
we refrain from using this notation.
\end{remark}

\begin{proof}
        Let $X=(X, \rho, \delta)$ be in $\YD{A}(\cC)$. 
        It follows from Lemma \ref{lem:twisting_functors} that ${}^A\sT(X)=(X,
\rho, c^{-1}_{X,A} \circ \delta)$ is an 
$A^{\cop}$-comodule and $A^\cop$-module in $\ov{\cC}$.
It remains to be shown that  ${}^A\sT(X)$ obeys the
condition of a left-right
$A^\cop$-Yetter-Drinfel'd module in $\ov{\cC}$:
        \begin{align*}
                \begin{grform}
                        \begin{scope}[scale = 0.5]
                                \vLine{0}{0}{0}{3}{white}
                                \dMult{0}{2}{1}{-2}{\grau}
                                \dMult{2}{4}{1}{3}{\grau}
                                \dAction{2}{2}{1}{-2}{\grau}{black}
                                \dAction{0}{4}{1}{3}{\grau}{black}
                                \vLine{0}{2}{1}{3}{\grau}
                                \vLineO{1}{2}{0}{3}{\grau}
                                \vLine{2}{2}{3}{3}{\grau}
                                \vLineO{3}{2}{2}{3}{black}
                                \vLine{1}{3}{2}{4}{\grau}
                                \vLineO{2}{3}{1}{4}{black}
                                \vLine{0}{3}{0}{4}{\grau}
                                \vLine{3}{3}{3}{4}{\grau}
                        \end{scope}
                        \draw (0.25 , -0.3) node {$A$};
                        \draw (1.5 , -0.3) node {$X$};
                \end{grform}
                =
                \begin{grform}
                        \begin{scope}[scale = 0.5]
                                \vLine{0}{0}{0}{3}{white}
                                \dMult{0.5}{2}{1}{-2}{\grau}
                                \dMult{0.5}{3}{1}{2}{\grau}
                                \dAction{2.5}{2}{0.5}{-2}{\grau}{black}
                                \dAction{2.5}{3}{0.5}{2}{\grau}{black}
                                \vLine{2.5}{2}{1.5}{3}{\grau}
                                \vLineO{1.5}{2}{2.5}{3}{\grau}
                                \vLine{1}{5}{3}{7}{\grau}
                                \vLineO{3}{5}{1}{7}{black}
                                \vLine{3}{2}{3}{3}{black}
                                \vLine{0.5}{2}{0.5}{3}{\grau}
                        \end{scope}
                        \draw (0.5 , -0.3) node {$A$};
                        \draw (1.5 , -0.3) node {$X$};
                \end{grform}
                =
                \begin{grform}
                        \begin{scope}[scale = 0.5]
                                \vLine{0}{0}{0}{3}{white}
                                \dMult{0}{1}{1}{-1}{\grau}
                                \dMult{0}{5}{1}{1}{\grau}
                                \dAction{0}{2}{1}{1}{\grau}{black}
                                \dAction{0}{4}{1}{-1}{\grau}{black}
                                \vLine{2}{1}{1}{2}{black}
                                \vLineO{1}{1}{2}{2}{\grau}
                                \vLine{2}{4}{1}{5}{\grau}
                                \vLineO{1}{4}{2}{5}{black}
                                \vLine{0.5}{6}{2}{7}{\grau}
                                \vLineO{2}{6}{0.5}{7}{black}
                                \vLine{0}{1}{0}{2}{\grau}
                                \vLine{2}{0}{2}{1}{black}
                                \vLine{2}{2}{2}{4}{\grau}
                                \vLine{0}{4}{0}{5}{\grau}
                                \vLine{2}{5}{2}{6}{black}
                        \end{scope}
                        \draw (0.25 , -0.3) node {$A$};
                        \draw (1 , -0.3) node {$X$};
                \end{grform}
                =
                \begin{grform}
                        \begin{scope}[scale = 0.5]
                                \vLine{0}{0}{0}{3}{white}
                                \dMult{0}{1}{1}{-1}{\grau}
                                \dMult{1}{6}{1}{1}{\grau}
                                \dAction{1}{2}{1}{1}{\grau}{black}
                                \dAction{0}{5}{1}{-1}{\grau}{black}
                                \vLine{0}{1}{1}{2}{\grau}
                                \vLineO{1}{1}{0}{2}{\grau}
                                \vLine{2}{3}{1}{4}{black}
                                \vLineO{0}{2}{2}{4}{\grau}
                                \vLine{0}{5}{1}{6}{\grau}
                                \vLineO{1}{5}{0}{6}{black}
                                \vLine{2}{0}{2}{2}{black}
                                \vLine{2}{4}{2}{6}{\grau}
                                \vLine{0}{6}{0}{7}{black}
                        \end{scope}
                        \draw (0.25 , -0.3) node {$A$};
                        \draw (1 , -0.3) node {$X$};
                \end{grform}.
        \end{align*}
One finally verifies that the braiding isomorphisms 
in the categories $\YD{A}(\cC)$ and 
$\lrYD{(A^\cop)^\cop}{A^\cop}(\ov{\cC})$ of Yetter-Drinfel'd
modules coincide as morphisms in the underlying
category $\cC$.
\end{proof}

\begin{remark}
One can show by similar arguments that the isomorphisms ${}_A\sT, \sT^A$ and
$\sT_A$ extend to braided and strict monoidal functors
        \begin{align*}
                {}_A\sT &: \YD{A}(\cC) \rightarrow \rlYD{A^\op}{(A^\op)^\op}
(\ov{\cC}),\\
                \sT^A &: \rYD{A}(\cC) \rightarrow \rlYD{(A^\cop)^\cop}{A^\cop}
(\ov{\cC}),\\
                \sT_A &: \rYD{A}(\cC) \rightarrow \lrYD{A^\op}{(A^\op)^\op}
(\ov{\cC}).
        \end{align*}
\end{remark}

\begin{theorem}
\label{thm:side_switch_functor}
Let $A$ be a Hopf algebra in a braided category $\cC$ and 
$(X, \rho^r , \delta^r )$ a right Yetter-Drinfel'd module 
over $A$. Consider
\[
\sT(X, \rho^r , \delta^r ) = (X, \rho^- \circ (S^{-1} \otimes \id_X), (S \otimes
\id_X) \circ \delta^+ ),
\]
with $\rho^- := \rho^r \circ c_{X,A}^{-1}$ and $\delta^+ := c_{X,A} \circ
\delta^r$.
The functor
\[
\sT = (\sT_A)^A : \rYD{A}(\cC) \rightarrow \YD{A}(\cC)
\] 
has a monoidal structure $\sT_2(X,Y) : \sT(X) \otimes \sT(Y) \rightarrow \sT(X \otimes
Y)$ given by
\[
\sT_2(X,Y) := (\id_X \otimes \rho^r_Y ) \circ (\id_X \otimes c^{-1}_{Y,A}) \circ
(\delta^r_X \otimes \id_Y ).
\] 
The monoidal functor $(\sT, \sT_2)$ is braided.
\end{theorem}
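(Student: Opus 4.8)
The plan is to build the functor $\sT$ as a composite of the two side-switch functors $\sT^A$ and $\sT_A$ (followed by the side-switch on the comodule half), each of which is already known from Lemma~\ref{rem:twisting_functors} and its Yetter-Drinfel'd extension to be an isomorphism of categories. Concretely, starting from a right Yetter-Drinfel'd module $(X,\rho^r,\delta^r)$ over $A$ in $\cC$, I would first apply $\sT_A$ to turn the right $A$-action into a left $A^\op$-action in $\ov\cC$, using $\rho^- := \rho^r\circ c^{-1}_{X,A}$, and dually apply $\sT^A$ to the coaction to obtain a left $A^\cop$-coaction via $\delta^+ := c_{X,A}\circ\delta^r$. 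Composing $S^{-1}$ into the action and $S$ into the coaction (as dictated by Remark~\ref{rem:Antipode_as_homomorphism}, which identifies $S: A\to(A^\cop)^\op$ and $S^{-1}$ as the required algebra/coalgebra isomorphisms carrying $A^\op$- and $A^\cop$-structures back to honest $A$-structures in $\cC$) then produces the stated left action $\rho^-\circ(S^{-1}\otimes\id_X)$ and left coaction $(S\otimes\id_X)\circ\delta^+$. The first task is therefore to check that $\sT(X)$ really lands in $\YD{A}(\cC)$: that the new action and coaction satisfy the module, comodule, and left YD axioms. I expect this to be a diagrammatic bookkeeping exercise combining the left-right YD condition one gets from the twisting functors with the antipode relations $\omega\circ(S_A\otimes\id)=\omega\circ(\id\otimes S_B)$ and the fact that $S$ is a Hopf isomorphism onto $(A^\cop)^\op$; invertibility of $\sT$ follows because each constituent twist is invertible and $S$ is invertible.

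The genuinely new content is the monoidal structure $\sT_2$, so the bulk of the work goes there. First I would verify that the stated morphism
\[
\sT_2(X,Y) := (\id_X\otimes\rho^r_Y)\circ(\id_X\otimes c^{-1}_{Y,A})\circ(\delta^r_X\otimes\id_Y)
\]
is in fact a morphism $\sT(X)\otimes\sT(Y)\to\sT(X\otimes Y)$ in $\YD{A}(\cC)$, i.e.\ that it is $A$-linear and $A$-colinear for the transported structures. This is where the right YD condition on $X$ (relating $\delta^r_X$ and $\rho^r_X$) is essential: it is precisely the compatibility that lets one slide the $A$-action of $\sT(X\otimes Y)$ past the braiding and the coaction $\delta^r_X$ appearing in $\sT_2$, and symmetrically for colinearity. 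I would prove these two equalities graphically, pushing the structure morphisms of $A$ through the braiding and inverse braiding and invoking the YD condition exactly once in each direction.

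Next I would check the coherence (associativity) axiom for $(\sT,\sT_2)$, namely that the two ways of mapping $\sT(X)\otimes\sT(Y)\otimes\sT(Z)$ to $\sT(X\otimes Y\otimes Z)$ agree. Since everything is strict and $\sT_2$ is built from one coaction $\delta^r_X$, one braiding, and one action $\rho^r_Y$, the two composites differ by how the coaction on $X$ and the actions on $Y,Z$ are interleaved; coassociativity of $\delta^r_X$, associativity of the actions, and naturality/hexagon for the braiding of $\cC$ should force them to coincide. The unit axioms are immediate from the counit and unit conditions on $\delta^r$ and $\rho^r$ together with $(\eps\otimes\id)\circ\delta^r=\id$.

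Finally, to see that $(\sT,\sT_2)$ is \emph{braided}, I would compute both $c^{\YD{}}_{\sT(X),\sT(Y)}$ (the braiding of $\YD{A}(\cC)$ evaluated on the transported objects) and the braiding of $\rYD{A}(\cC)$ transported through $\sT_2$, and show the hexagon
\[
\sT_2(Y,X)\circ c^{\YD{}}_{\sT(X),\sT(Y)} = \sT(c^{\YD{}}_{X,Y})\circ\sT_2(X,Y)
\]
holds. Here the left-hand braiding is the formula \eqref{YD-braiding} applied to $(X,\rho^-\circ(S^{-1}\otimes\id),(S\otimes\id)\circ\delta^+)$, and the right-hand side involves the right-YD braiding $(\id_Y\otimes\rho^r_X)\circ(c_{X,Y}\otimes\id_A)\circ(\id_X\otimes\delta^r_Y)$. \textbf{The main obstacle} I anticipate is precisely this braided-compatibility check: the two antipodes $S$ and $S^{-1}$ inserted by $\sT$ must cancel against the antipodes hidden in the inverse braiding formula $(c^{\YD{}})^{-1}$ and against the $S$ coming from passing between $A$ and $(A^\cop)^\op$, and keeping the chirality of each braiding straight (ordinary $c$ in $\cC$ versus inverse braiding in $\ov\cC$) is error-prone. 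I would organize this by first proving it for the case $S^2=\id$ to locate the correct placement of braidings, then reinstating $S^2$ using Lemma~\ref{lem:properties_of_theta}(3), whose conjugation identity for $\theta$ is exactly the tool that converts an $S^2$-twisted braiding back into the genuine YD braiding; this lemma is the reason the non-trivial $\sT_2$ (rather than the naive identity monoidal structure) is forced.
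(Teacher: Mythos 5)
Your overall architecture matches the paper's: $\sT$ is built as the composite $({}^A\sT)^{-1}\circ\sS\circ\sT_A$ through the left-right categories over $A^\op$ and $A^\cop$ in $\ov{\cC}$, and the substantive work is the morphism property, coherence, and braidedness of $\sT_2$. (The paper handles the first of these more slickly than you propose, via the identity $\sT_2(X,Y)=c^{\YD{}}_{Y,X}\circ c^{-1}_{Y,X}$: since the Yetter-Drinfel'd braiding is automatically a morphism, everything reduces to checking that $c^{-1}_{Y,X}$ is $A$-(co)linear as a map $\sT X\otimes\sT Y\to\sT(Y\otimes X)$; your direct diagrammatic verification with one use of the right YD condition in each direction is also viable. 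Your invocation of the Hopf-pairing identity $\omega\circ(S_A\otimes\id)=\omega\circ(\id\otimes S_B)$ at this stage is a slip --- no pairing appears here --- but harmless.) However, two of your steps fail as written. First, the coherence axiom for $\sT_2$ does \emph{not} follow from coassociativity of $\delta^r_X$, associativity of the actions, and naturality of the braiding alone: in $\sT_2(X\otimes Y,Z)\circ(\sT_2(X,Y)\otimes\id)$ the middle object $Y$ is first acted upon (by the output of $\delta^r_X$) and then coacted (inside $\delta^r_{X\otimes Y}$), whereas in $\sT_2(X,Y\otimes Z)\circ(\id\otimes\sT_2(Y,Z))$ these occur in the opposite order. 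Interchanging them requires the Yetter-Drinfel'd condition on $Y$ together with the antipode axiom: in Sweedler notation the comparison produces a factor $x_{(1)}S(x_{(2)})$ that collapses to $\eps$, and without the YD condition the two composites are simply different morphisms. The paper says explicitly that this equality ``is a direct consequence of the Yetter-Drinfel'd condition''; your ingredient list omits exactly the essential one.

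Second, your strategy for the braided axiom --- prove it when $S^2=\id$ and then ``reinstate'' $S^2$ via Lemma \ref{lem:properties_of_theta}(3) --- is not a valid reduction: there is no mechanism transporting a proof from the involutive case to the general one, and that lemma is the tool for a different statement (the monoidal isomorphism $\theta\colon\sT\cong\sT'$ of Remark \ref{rem:side_switch_functor}, and the analysis of $\Omega^{\omega^-}\circ\Omega^{\omega}$), not for the hexagon of $\sT$. In fact the anticipated $S^2$-bookkeeping never arises: the paper verifies the two diagram identities $c^{\YD{}}_{\sT(X),\sT(Y)}=c_{X,Y}\circ\sT_2(X,Y)$ and $\sT(c^{\YD{}}_{X,Y})=\sT_2(Y,X)\circ c_{X,Y}$, in which the $S$ inserted into the coaction of $\sT(X)$ cancels \emph{directly} against the $S^{-1}$ inserted into the action of $\sT(Y)$ (in the vector-space picture, $y\cdot S^{-1}(S(x_{(1)}))\otimes x_{(0)}=y\cdot x_{(1)}\otimes x_{(0)}$), and braidedness then follows in one line by composing with $\sT_2(X,Y)^{-1}$. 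Replacing your two-stage $S^2$ detour by these two direct identities, and adding the Yetter-Drinfel'd condition to your coherence check, repairs the proof.
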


\begin{proof}
The functor $\sT : \rYD{A}(\cC) \rightarrow \YD{A}(\cC)$ is defined as the
composition of the functors in the diagram
\[
\begin{xy}
\xymatrix{
\rYD{A}(\cC ) \ar@{-->}[rr]^{\sT} \ar[d]^{\sT_A}& & \YD{A}(\cC ) \\
\lrYD{A^\op}{A^\op}(\ov{\cC} )\ar[rr]^{\sS} & & \lrYD{A^\cop}{A^\cop}(\ov{\cC} )
\ar[u]^{({}^A \sT )^{-1}}
}
\end{xy}
\]
Here $\sS$ denotes the functor of restriction along 
$S^{-1} : A^\cop \rightarrow
A^\op$ and corestriction along $S : A^\op \rightarrow A^\cop$. 
Thus, $\sT$ is a functor. Expressing the monoidal
structure in terms of braidings,
\[
        T_2(X,Y) = c_{Y,X}^{\YD{}} \circ c^{-1}_{Y,X} \,\,,
\]
and noting that the isomorphism $c_{Y,X}^{\YD{}}$ is $A$-linear 
and $A$-colinear, we see that the
morphism $T_2(X,Y)$ is $A$-(co)linear, iff
$c^{-1}_{Y,X}$ is $A$-(co)linear as a morphism $\sT X \otimes \sT Y \rightarrow
\sT(Y \otimes X)$; this is easily checked.

The inverse of $\sT_2(X,Y)$ is given by
\[
\sT_2(X,Y)^{-1} =  (\id_X \otimes \rho^r_Y ) \circ (\id_X \otimes c_{Y,A}^{-1})
\circ (\id_X \otimes S^{-1} \otimes \id_Y )
\circ (\delta_X^r \otimes \id_Y ).
\]
This follows by using that $S^{-1}$ is the antipode of $A^\op$.
We leave it to the reader to show that the equality 
\[
\sT_2(X \otimes Y, Z) \circ (\sT_2(X,Y) \otimes \id_{\sT(Z)}) = \sT_2(X,Y
\otimes Z) \circ (\id_{\sT(X)} \otimes \sT_2(Y,Z))
\] 
is a direct consequence of the Yetter-Drinfel'd condition.
We conclude that $(\sT, \sT_2)$ is a monoidal
functor.\\
Finally we show that $(\sT, \sT_2)$ is a braided monoidal functor, i.e.\
that the diagram
\[
\begin{xy}
\xymatrix{
\sT(X) \otimes \sT(Y) \ar[rr]^{\sT_2(X,Y)} \ar[d]_{c^{\YD{}}_{\sT(X),\sT(Y)}}& &
\sT(X \otimes Y) \ar[d]^{\sT \left(c^{\YD{}}_{X,Y}\right)}\\
\sT(Y) \otimes \sT(X) \ar[rr]^{\sT_2(Y,X)}& & \sT(Y \otimes X)\\
}
\end{xy}
\]
commutes. 
One easily sees by drawing the corresponding braid diagrams, that
$c^{\YD{}}_{\sT(X),\sT(Y)}$ is equal to $c_{X,Y} \circ \sT_2(X,Y)$ and $\sT
(c^{\YD{}}_{X,Y})$ is equal to $\sT_2(Y,X) \circ c_{X,Y}$. Thus we have
\begin{align*}
\sT_2(Y,X) \circ c^{\YD{}}_{\sT(X),\sT(Y)} \circ \sT_2(X,Y)^{-1} = \sT_2(Y,X)
\circ c_{X,Y} = \sT (c^{\YD{}}_{X,Y}).
\end{align*}
\end{proof}

\begin{remark}
\label{rem:side_switch_functor}
There is another braided equivalence between the same braided 
categories of left/right Yetter-Drinfel'd modules
\[
\sT' = (\sT^A)_A : \rYD{A}(\cC) \rightarrow \YD{A}(\cC) .
\] 
The functor $\sT'$ is given on objects by
\[
\sT'(X, \rho^r, \delta^r) = (X, \rho^+ \circ (S \otimes \id_X), (S^{-1} \otimes
\id_X) \circ \delta^- ).
\]
The monoidal structure $T'_2(X,Y) :\sT'(X) \otimes \sT'(Y) \rightarrow \sT'(X \otimes Y)$ on $\sT'$ is given by
\[
\sT'_2(X,Y) := (\rho^r_X \otimes \id_Y) \circ (\id_X \otimes S^{-1} \otimes
\id_Y) \circ (\id_X \otimes c^{-1}_{A,Y}) \circ (\id_X \otimes \delta^r_Y).
\]
The two monoidal functors $\sT, \sT' : \rYD{A}(\cC) \rightarrow \YD{A}(\cC)$ are
isomorphic as monoidal functors.
An isomorphism is given by the family of morphisms
\[
\theta_{\sT X} := \rho_{\sT X} \circ (S \otimes \id_X ) \circ \delta_{\sT X}.
\]
The inverse is
\[
\theta_{\sT X}^{-1} = \rho_{\sT X} \circ c_{A,X}^{-1} \circ (\id_X \otimes S^{-2}) \circ
c_{X,A}^{-1} \circ \delta_{\sT X}.
\]
A lemma for right Yetter-Drinfel'd modules that is analogous
to Lemma \ref{lem:properties_of_theta} implies that
$\theta$ is indeed a monoidal isomorphism.
\end{remark}

\subsection{Equivalence of categories from Hopf pairings}
\label{sec:HopfPairings}
In this subsection, we prove that for Hopf algebras $A$
and $B$ that are related by a non-degenerate Hopf pairing,
there is a braided monoidal equivalence
between the categories $\YD{A}(\cC)$ and $\rYD{B}(\cC)$. This equivalence is a
strict monoidal functor.
\begin{lemma}
Let $\omega : A \otimes B \rightarrow \unit$ be a non-degenerate Hopf pairing
with inverse copairing $\omega' : \unit \rightarrow B \otimes A$.
Then
\begin{align*}
{}_{\omega'}\sD^\omega : \lrYD{A^\cop}{A}(\cC) &\rightarrow
\rlYD{B^\cop}{B}(\cC) \\
(X, \rho, \delta) &\mapsto (X, (\id \otimes \omega ) \circ (\delta \otimes \id
), (\id \otimes \rho ) \circ (\omega' \otimes \id )) \,\,,
\end{align*} 
is a strict monoidal braided functor.
In particular, the two categories $\lrYD{A^\cop}{A}(\cC)$ and 
$\rlYD{B^\cop}{B}(\cC)$ are
equivalent as braided monoidal categories.

\end{lemma}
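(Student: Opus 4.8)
Write $F := {}_{\omega'}\sD^\omega$ for brevity; the plan is to verify in turn that $F$ lands in $\rlYD{B^\cop}{B}(\cC)$, that it is strict monoidal and braided, and finally that it is invertible, every step being a graphical computation built on the Hopf-pairing axioms and the non-degeneracy relations $(\omega \otimes \id_A)\circ(\id_A \otimes \omega') = \id_A$ and $(\id_B \otimes \omega)\circ(\omega' \otimes \id_B) = \id_B$. First I would confirm that $F(X)$ carries a genuine right $B$-module and left $B$-comodule structure. The right $B$-action $(\id \otimes \omega)\circ(\delta \otimes \id)$, obtained from the right $A$-coaction $\delta$, is associative and unital: this is the right-handed analogue of Lemma \ref{lem:duality_functor}, with multiplicativity in $B$ following from the coassociativity of $\delta$ together with the axiom $\omega\circ(\id_A \otimes \mu_B) = \omega\circ(\id_A \otimes \omega \otimes \id_B)\circ(\Delta_A \otimes \id_{B \otimes B})$, and unitality from $\omega\circ(\id_A \otimes \eta_B) = \eps_A$. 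Dually, the left $B$-coaction $(\id \otimes \rho)\circ(\omega' \otimes \id)$ built from the left $A$-action $\rho$ is coassociative and counital by the corresponding Hopf-copairing axioms for $\omega'$, exactly as in Lemma \ref{lem:dualization_modules}.

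The crux is the Yetter-Drinfel'd compatibility, and I expect this to be the main obstacle. Here I would take the left-right YD-condition satisfied by $(X, \rho, \delta)$ from Figure \ref{fig:YD_conditions} and transport it into the right-left YD-condition for the new $B$-structures. Concretely, I substitute the defining expressions for the new action and coaction, contract a pairing against a copairing wherever $\omega$ meets $\omega'$ by means of the two non-degeneracy relations, and use the remaining pairing axioms to slide the braiding past $\omega$ and $\omega'$; after these moves the desired identity collapses onto the YD-condition for $X$. The delicate point is that $F$ transports the action via $\omega'$ but the coaction via $\omega$, so the two halves of the single-object YD-condition are carried by different data and must be made to recombine correctly.

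Next I would establish strict monoidality, $F(X) \otimes F(Y) = F(X \otimes Y)$ with coinciding structure morphisms. The diagonal $A$-coaction on $X \otimes Y$ is carried by $\omega$ to the diagonal $B$-action, using the pairing axiom relating $\omega$ to $\Delta_B$ and the naturality of the braiding; symmetrically, $\omega'$ carries the diagonal $A$-action to the diagonal $B$-coaction. These are the two-sided analogues of the monoidality computation already performed for ${}^\omega\sD$ in Lemma \ref{lem:duality_functor}, and the monoidal unit is preserved because $\omega\circ(\eta_A \otimes \id) = \eps_B$ and $(\eps_B \otimes \id)\circ\omega' = \eta_A$. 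For the braided property I would rewrite the braidings of $\lrYD{A^\cop}{A}(\cC)$ and $\rlYD{B^\cop}{B}(\cC)$ in terms of their actions and coactions and substitute the pairing definitions; their equality as morphisms in $\cC$ then reduces once more to the non-degeneracy relations.

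Finally, to promote $F$ to an equivalence I would exhibit its inverse using the reversed data. The functor $\rlYD{B^\cop}{B}(\cC) \to \lrYD{A^\cop}{A}(\cC)$ defined symmetrically --- converting the left $B$-coaction into a left $A$-action via $\omega$ and the right $B$-action into a right $A$-coaction via $\omega'$ --- is inverse to $F$: on composing the two, the non-degeneracy identities $(\omega \otimes \id_A)\circ(\id_A \otimes \omega') = \id_A$ and $(\id_B \otimes \omega)\circ(\omega' \otimes \id_B) = \id_B$ contract the inserted pairing and copairing and return the original $\rho$ and $\delta$. Hence $F$ is an isomorphism of categories, and in particular the asserted braided monoidal equivalence between $\lrYD{A^\cop}{A}(\cC)$ and $\rlYD{B^\cop}{B}(\cC)$.
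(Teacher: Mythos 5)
Your proposal is correct and takes essentially the same route as the paper: the $B$-(co)module structures and strict monoidality are inherited from the one-sided dualization functors of Lemma \ref{lem:duality_functor}, the Yetter-Drinfel'd condition is transported graphically through the Hopf-pairing and copairing axioms together with the contraction $(\id_B \otimes \omega)\circ(\omega' \otimes \id_B) = \id_B$, and preservation of the braiding reduces to $(\omega \otimes \id_A)\circ(\id_A \otimes \omega') = \id_A$. The only difference is cosmetic: you spell out the inverse functor explicitly, whereas the paper leaves it implicit via the analogue of Lemma \ref{lem:dualization_modules}.
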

\begin{proof}
Let $(X, \rho, \delta )$ be an $A$-Yetter-Drinfel'd module. 
{}From Lemma \ref{lem:duality_functor} it is clear that $\sD
(X, \rho, \delta)$ is a $B$-module and $B$-comodule.
We have to check the Yetter-Drinfel'd condition. 
Since $X$ is an $A$-Yetter-Drinfel'd module, we have the equality
\[
\begin{grform}
\begin{scope}[scale = 0.5]
\dMult{3}{6}{1}{1}{\grau}
\dMult{2}{2}{1}{-1}{\grau}
\dAction{3}{2}{1}{1}{\grau}{black}
\dAction{3}{6}{-1}{-1}{\grau}{black}
\dPairing{3.5}{7}{2}{1}{\grau}{\scriptsize $\omega$}
\dCopairing{0.5}{1}{2}{1}{\grau}{\scriptsize $\omega'$}
\vLine{2}{2}{4}{6}{\grau}
\vLineO{4}{3}{2}{5}{black}
\vLine{4}{0}{4}{2}{black}
\vLine{2}{6}{2}{8}{black}
\vLine{0.5}{1}{0.5}{8}{\grau}
\vLine{5.5}{0}{5.5}{7}{\grau}
\end{scope}
\draw (2 , -0.3) node {$X$};
\draw (1 , 4.3) node {$X$};
\draw (2.75 , -0.3) node {$B$};
\draw (0.25 , 4.3) node {$B$};
\end{grform}
=
\begin{grform}
\begin{scope}[scale = 0.5]
\dMult{2}{3}{1}{-1}{\grau}
\dMult{5}{5}{1}{1}{\grau}
\dAction{2}{5}{1}{1}{\grau}{black}
\dAction{6}{3}{-1}{-1}{\grau}{black}
\dPairing{5.5}{6}{2}{1}{\grau}{\scriptsize $\omega$}
\dCopairing{0.5}{2}{2}{1}{\grau}{\scriptsize $\omega'$}
\vLine{5}{3}{3}{5}{black}
\vLineO{3}{3}{5}{5}{\grau}
\vLine{3}{6}{3}{8}{black}
\vLine{5}{0}{5}{2}{black}
\vLine{0.5}{2}{0.5}{8}{\grau}
\vLine{7.5}{0}{7.5}{6}{\grau}
\vLine{2}{3}{2}{5}{\grau}
\vLine{6}{3}{6}{5}{\grau}
\end{scope}
\draw (2.5 , -0.3) node {$X$};
\draw (3.75 , -0.3) node {$B$};
\draw (1.5 , 4.3) node {$X$};
\draw (0.25 , 4.3) node {$B$};
\end{grform}
.
\]
Using that $\omega$ is a Hopf pairing, $\omega'$ is a Hopf copairing
and $(\id_B \otimes \omega) \circ (\omega' \otimes \id_B) = \id_B$ we get the
equality
\[
\begin{grform}
\begin{scope}[scale = 0.5]
\dMult{5}{2}{3}{-2}{\grau}
\dMult{0}{5}{3}{2}{\grau}
\dAction{2}{1}{1}{1}{\grau}{black}
\dAction{6}{6}{-1}{-1}{\grau}{black}
\dPairing{6}{6}{2}{1}{\grau}{\scriptsize $\omega$}
\dCopairing{0}{1}{2}{1}{\grau}{\scriptsize $\omega'$}
\vLine{5}{2}{3}{5}{\grau}
\vLine{0}{1}{0}{5}{\grau}
\vLine{8}{2}{8}{6}{\grau}
\vLine{3}{0}{3}{1}{black}
\vLine{5}{6}{5}{7}{black}
\vLineO{3}{2}{5}{5}{black}
\end{scope}
\draw (1.5 , -0.3) node {$X$};
\draw (3.25 , -0.3) node {$B$};
\draw (2.5 , 3.8) node {$X$};
\draw (0.75 , 3.8) node {$B$};
\end{grform}
=
\begin{grform}
\begin{scope}[scale = 0.5]
\dMult{4}{1}{1}{-1}{\grau}
\dMult{0}{6}{1}{1}{\grau}
\dAction{4}{6}{1}{1}{\grau}{black}
\dAction{1}{1}{-1}{-1}{\grau}{black}
\dPairing{1}{1}{3}{1}{\grau}{\scriptsize $\omega$}
\dCopairing{1}{6}{3}{1}{\grau}{\scriptsize $\omega'$}
\vLine{0}{1}{5}{6}{black}
\vLineO{5}{1}{0}{6}{\grau}
\end{scope}
\draw (0 , -0.3) node {$X$};
\draw (2.25 , -0.3) node {$B$};
\draw (2.5 , 3.8) node {$X$};
\draw (0.25 , 3.8) node {$B$};
\end{grform}
\]
which is the Yetter-Drinfel'd condition for the $B$-module 
and $B$-comodule structure on $\sD (X)$.
The functor $\sD$ is strict monoidal, since the functors 
${}_{\omega'}\sD : \lMod{A^\cop}{\cC} \rightarrow \lComod{B}{\cC}$ and 
$\sD^\omega : \rComod{A}{\cC} \rightarrow \rMod{B^\cop}{\cC}$ are strict monoidal.
Finally, the braiding is preserved: 
\[
c^{\YD{}}_{\sD(X), \sD(Y)} = c^{\YD{}}_{X, Y} = \sD(c^{\YD{}}_{X, Y})\; .
\]
This follows from $(\omega \otimes \id_A) \circ (\id_A \otimes \omega') =
\id_A$.
\end{proof}

\begin{corollary}\label{cor:pairing_functor}
Let $\omega : A \otimes B \rightarrow \unit$ be a non-degenerate Hopf pairing
with inverse copairing $\omega' : \unit \rightarrow B \otimes A$.
Then
\begin{align*}
{}_{\omega'}^\omega\sD : \YD{A}(\cC) &\rightarrow \rYD{B}(\cC) \\
(X, \rho_X, \delta_X) &\mapsto (X, \rho_{\sD(X)}, \delta_{\sD(Y)})
\end{align*} 
with
\begin{align*}
\rho_{\sD(X)} &= (\id \otimes \omega ) \circ (c^{-1}_{A,X} \otimes S^{-1}) \circ
(\delta \otimes \id ) \\
\delta_{\sD(X)} &= c_{B,X} \circ (S \otimes \rho ) \circ (\omega' \otimes \id )
\end{align*}
defines a braided, strict monoidal functor.

In particular,
the categories $\YD{A}(\cC)$ and $\rYD{B}(\cC)$ are equivalent as
braided monoidal categories.
\end{corollary}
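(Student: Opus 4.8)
\emph{Strategy.} The functor ${}_{\omega'}^\omega\sD$ is the one-sided avatar (left action and left coaction on the source, right action and right coaction on the target) of the duality functor ${}_{\omega'}\sD^\omega$ of the preceding Lemma, which instead relates the mixed categories $\lrYD{A^\cop}{A}(\cC)$ and $\rlYD{B^\cop}{B}(\cC)$. The plan is therefore to obtain ${}_{\omega'}^\omega\sD$ by conjugating that duality functor with the side-switch isomorphisms of Lemma \ref{lem:partial_side_switch_functor} and its Remark, which convert between one-sided and mixed Yetter-Drinfel'd modules. Each of these building blocks has already been shown to be a braided strict monoidal isomorphism, and a composite of braided strict monoidal functors is again braided and strict monoidal; so once the composite is identified with ${}_{\omega'}^\omega\sD$, the Corollary follows at once, and the in-particular statement is immediate since every factor is an isomorphism of categories.

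\emph{Carrying out the conjugation.} Concretely I would pass from $\YD{A}(\cC)$ into $\ov\cC$ using ${}^A\sT$, apply a duality functor of the same type there (the preceding Lemma holds over any braided base, in particular over $\ov\cC$), and return to $\rYD{B}(\cC)$ along an inverse side-switch, mirroring the factorization $\sT=({}^A\sT)^{-1}\circ\sS\circ\sT_A$ used in the proof of Theorem \ref{thm:side_switch_functor}. The side-switches turn the left coaction into a right coaction by braiding it past $X$ and, crucially, introduce the antipodes $S$ and $S^{-1}$; these are precisely the antipodes appearing in the stated formulas for $\rho_{\sD(X)}$ and $\delta_{\sD(X)}$. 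Keeping track of whether each slot carries $S$ or $S^{-1}$, and whether the relevant Hopf algebra is $A$, $A^\cop$ or $A^\op$, is the only delicate bookkeeping; I would settle it by drawing braid diagrams as in the proof of Lemma \ref{lem:partial_side_switch_functor}.

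\emph{Direct verification as fallback.} Since threading the side-switches through $\ov\cC$ is easy to get wrong, the version I would actually write out is a direct check parallel to the proof of the preceding Lemma. First, Lemma \ref{lem:duality_functor} already gives that $\sD(X)$ is a right $B$-module and right $B$-comodule; I verify the right Yetter-Drinfel'd condition by the same diagrammatic manipulation, inserting that $\omega$ is a Hopf pairing, that $\omega'$ is a Hopf copairing, the identity $(\id_B\otimes\omega)\circ(\omega'\otimes\id_B)=\id_B$, and the left Yetter-Drinfel'd condition for $X$ over $A$. Second, strict monoidality, $\sD(X\otimes Y)=\sD(X)\otimes\sD(Y)$, follows from naturality of the braiding together with the remaining pairing and copairing axioms, as for the constituent functors ${}^\omega\sD$ and ${}_{\omega'}\sD$. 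Third, preservation of the braiding is checked by matching the $\rYD{B}$-braiding formula with the $\YD{A}$-braiding formula; after the antipodes have been placed by the side-switches this reduces to the non-degeneracy relation $(\omega\otimes\id_A)\circ(\id_A\otimes\omega')=\id_A$. I expect this final reconciliation of the two braiding conventions to be the main obstacle, and it is the step I would present in full.
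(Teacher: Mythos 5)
Your first strategy is exactly the paper's proof: the paper observes that $\omega$ is a Hopf pairing of $A^\cop$ and $B^\op$ in $\ov{\cC}$ and realizes ${}_{\omega'}^\omega\sD$ as the composite $\sS \circ {}^{B^\op}\sT \circ {}_{\omega'}\sD^\omega \circ {}^{A}\sT$, whence the corollary follows because each factor is a braided, strict monoidal functor. One bookkeeping correction: the antipodes $S$ and $S^{-1}$ in the stated formulas are introduced not by the side-switch functors (which only braid the coaction past $X$) but by the functor $\sS$ of restriction along $S^{-1} : B \rightarrow (B^\op)^\cop$ and corestriction along $S : (B^\op)^\cop \rightarrow B$; with that fixed, your direct-verification fallback is sound but unnecessary, since it re-proves diagrammatically what the cited lemmas already package.
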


\begin{proof}
Note that $\omega : A \otimes B \rightarrow \unit$ is a Hopf pairing of the two
Hopf algebras $A^\cop$ and $B^\op$ in $\ov{\cC}$.
So we have the following composite of braided, strict monoidal functors
\[
\begin{xy}
\xymatrix{
\YD{A}(\cC) \ar@{-->}[rr] \ar[d]^{{}^A\sT}& & \rYD{(B^\op)^\cop}(\cC)
\ar[r]^-{\sS} & \rYD{B}(\cC) \\
\lrYD{(A^\cop)^\cop}{A^\cop}(\ov{\cC}) \ar[rr]^{{}_{\omega'}\sD^\omega} & &
\rlYD{B^\op}{(B^\op)^\cop}(\ov{\cC}) \ar[u]^{{}^{B^\op}\sT} &
}
\end{xy}
\]
Here $\sS$ denotes the functor of restriction along $S^{-1} : B \rightarrow
(B^\op)^\cop$ and corestriction along $S : (B^\op)^\cop \rightarrow B$. The top
line of the above diagram is the functor ${}_{\omega'}^\omega\sD$.
\end{proof}

Combining Theorem \ref{thm:side_switch_functor}, Remark
\ref{rem:side_switch_functor} and Corollary \ref{cor:pairing_functor},
we are now in a position to exhibit explicitly two 
braided equivalences
\[
\Omega, \Omega' : \YD{A}(\cC) \rightarrow \YD{B}(\cC).
\]
The first functor is the composition $\Omega := \sT \circ \sD$ with monoidal structure 
\begin{align*}
        \Omega_2(X,Y) &= \sT(\sD_2(X,Y)) \circ \sT_2(\sD X,\sD Y) = \id_{X
\otimes Y} \circ (c^{\YD{}}_{\sD Y, \sD X} \circ c^{-1}_{Y,X})\\
                &= c^{\YD{}}_{Y, X} \circ c^{-1}_{Y,X}.
\end{align*}
The second to last equal sign uses that $\sD$ is a strict braided functor.
The other functor is $\Omega' := \sT' \circ \sD$ with monoidal structure
\[
        \Omega'_2(X,Y) = \sT'(\sD_2(X,Y)) \circ \sT'_2(\sD X,\sD Y) = \left(
c_{X,Y}^{\YD{}} \right)^{-1} \circ c_{X,Y}.
\]
Graphically the functors and the monoidal structures look as follows:
\begin{align*}
        \newcommand{\rhoomega}{\raisebox{-.5\totalheight}{
        \begin{tikzpicture}
                \begin{scope}[scale=0.5]
                        \dAction{0}{2}{1}{-1}{\grau}{black}
                        \dSkewantipode{2}{1}{\grau}
                        \dSkewantipode{2}{2}{\grau}
                        \dPairing{1}{3}{1}{1}{\grau}{\tiny $\! \omega \!$}
                        \vLine{0}{2}{1}{3}{\grau}
                        \vLineO{1}{2}{0}{3}{black}
                        \vLine{1}{0}{2}{1}{\grau}
                        \vLineO{2}{0}{1}{1}{black}
                        \vLine{0}{3}{0}{4}{black}
                        \vLine{1}{-1}{1}{0}{\grau}
                        \vLine{2}{-1}{2}{0}{black}
                \end{scope}
                \draw (0.5 , -0.7) node {\tiny $B$};
                \draw (1 , -0.7) node {\tiny $X$};
                \draw (0 , 2.2) node {\tiny $X$};
        \end{tikzpicture}
        }
        }
        \newcommand{\deltaomega}{\raisebox{-.5\totalheight}{
        \begin{tikzpicture}
                \begin{scope}[scale=0.5]
                        \dAction{1}{1}{1}{1}{\grau}{black}
                        \dAntipode{0}{1}{\grau}
                        \dAntipode{0}{2}{\grau}
                        \dCopairing{0}{1}{1}{1}{\grau}{\tiny $\!\! \omega' \!\!
\vspace*{-1mm}$}
                        \vLine{1}{3}{0}{4}{black}
                        \vLineO{0}{3}{1}{4}{\grau}
                        \vLine{1}{4}{0}{5}{\grau}
                        \vLineO{0}{4}{1}{5}{black}
                        \vLine{2}{0}{2}{1}{black}
                        \vLine{2}{2}{1}{3}{black}
                \end{scope}
                \draw (1 , -0.2) node {\tiny $X$};
                \draw (0 , 2.7) node {\tiny $B$};
                \draw (0.5 , 2.7) node {\tiny $X$};
        \end{tikzpicture}
        }
        }
        \newcommand{\omegatwo}{\raisebox{-.5\totalheight}{
        \begin{tikzpicture}
                \begin{scope}[scale=0.5]
                        \dAction{0}{2}{1}{1}{\grau}{black}
                        \dAction{1}{1}{1}{-1}{\grau}{black}
                        \vLine{0}{0}{0}{1}{black}
                        \vLine{0}{1}{1}{2}{black}
                        \vLineO{1}{1}{0}{2}{\grau}
                        \vLine{2}{1}{2}{3}{black}
                \end{scope}
                \draw (0 , -0.2) node {\tiny $X$};
                \draw (1 , -0.2) node {\tiny $Y$};
                \draw (0.5 , 1.7) node {\tiny $X$};
                \draw (1 , 1.7) node {\tiny $Y$};
        \end{tikzpicture}
        }
        }
        \Omega(X,\rho_X, \delta_X) = \left( X, \rhoomega, \deltaomega \right) ,
\quad \Omega_2(X,Y) = \omegatwo ,\\
        \newcommand{\rhoomegaprime}{\raisebox{-.5\totalheight}{
        \begin{tikzpicture}
                \begin{scope}[scale=0.5]
                        \dAction{0}{2}{1}{-1}{\grau}{black}
                        \vLine{2}{1}{2}{3}{\grau}
                        \dPairing{1}{3}{1}{1}{\grau}{\tiny $\! \omega \!$}
                        \vLine{0}{2}{1}{3}{\grau}
                        \vLineO{1}{2}{0}{3}{black}
                        \vLine{2}{0}{1}{1}{black}
                        \vLineO{1}{0}{2}{1}{\grau}
                        \vLine{0}{3}{0}{4}{black}
                        \vLine{1}{-1}{1}{0}{\grau}
                        \vLine{2}{-1}{2}{0}{black}
                \end{scope}
                \draw (0.5 , -0.7) node {\tiny $B$};
                \draw (1 , -0.7) node {\tiny $X$};
                \draw (0 , 2.2) node {\tiny $X$};
        \end{tikzpicture}
        }
        }
        \newcommand{\deltaomegaprime}{\raisebox{-.5\totalheight}{
        \begin{tikzpicture}
                \begin{scope}[scale=0.5]
                        \dAction{1}{1}{1}{1}{\grau}{black}
                        \dCopairing{0}{1}{1}{1}{\grau}{\tiny $\!\! \omega' \!\!
\vspace*{-1mm}$}
                        \vLine{0}{1}{0}{5}{\grau}
                        \vLine{2}{0}{2}{1}{black}
                        \vLine{2}{2}{1}{5}{black}
                \end{scope}
                \draw (1 , -0.2) node {\tiny $X$};
                \draw (0 , 2.7) node {\tiny $B$};
                \draw (0.5 , 2.7) node {\tiny $X$};
        \end{tikzpicture}
        }
        }
        \newcommand{\omegatwoprime}{\raisebox{-.5\totalheight}{
        \begin{tikzpicture}
                \begin{scope}[scale=0.5]
                        \dAction{0}{1}{1}{-1}{\grau}{black}
                        \dAction{1}{3}{1}{1}{\grau}{black}
                        \dSkewantipode{1}{2}{\grau}
                        \vLine{2}{0}{2}{3}{black}
                        \vLine{0}{1}{1}{2}{\grau}
                        \vLineO{1}{1}{0}{2}{black}
                        \vLine{0}{2}{0}{4}{black}
                \end{scope}
                \draw (0.5 , -0.2) node {\tiny $X$};
                \draw (1 , -0.2) node {\tiny $Y$};
                \draw (0 , 2.2) node {\tiny $X$};
                \draw (1 , 2.2) node {\tiny $Y$};
        \end{tikzpicture}
        }
        }
        \Omega'(X,\rho_X, \delta_X) = \left( X, \rhoomegaprime, \deltaomegaprime
\right) , \quad \Omega'_2(X,Y) = \omegatwoprime .
\end{align*}

We summarize our findings:

\begin{theorem}
\label{thm:composedDuality}
Let $\omega : A \otimes B \rightarrow \unit$ be a non-degenerate Hopf pairing.
The categories $\YD{A}(\cC)$ and $\YD{B}(\cC)$ are 
braided equivalent via the monoidal functors
$\Omega$ and $\Omega'$ above.
\end{theorem}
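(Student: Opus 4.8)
The plan is to recognize the theorem as a direct assembly of the braided monoidal equivalences established in the preceding results, so that essentially no new computation is required. The functors $\Omega$ and $\Omega'$ have already been defined as the composites $\Omega = \sT \circ \sD$ and $\Omega' = \sT' \circ \sD$, and the task reduces to checking that each factor is a braided monoidal equivalence and that both braidedness and the property of being an equivalence are stable under composition.

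First I would invoke Corollary~\ref{cor:pairing_functor}: the dualization functor ${}_{\omega'}^\omega\sD : \YD{A}(\cC) \rightarrow \rYD{B}(\cC)$ is braided and strict monoidal, and since $\omega$ is non-degenerate it is an equivalence, with quasi-inverse built from the inverse copairing $\omega'$ exactly as in Lemma~\ref{lem:dualization_modules}. Next I would record that the side switch functor $\sT : \rYD{B}(\cC) \rightarrow \YD{B}(\cC)$ of Theorem~\ref{thm:side_switch_functor}, applied with $A$ replaced by $B$, is a braided monoidal functor with invertible structure morphism $\sT_2$; that it is moreover an equivalence follows from its factorization in the proof of that theorem through $\sT_B$, the restriction--corestriction functor $\sS$ along the antipode, and $({}^B\sT)^{-1}$, each of which is an isomorphism of the underlying categories by Lemma~\ref{lem:partial_side_switch_functor} and Remark~\ref{rem:twisting_functors}.

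Having these two factors in hand, I would conclude by the general principle that a composite of braided monoidal functors is braided monoidal and a composite of equivalences is an equivalence. This immediately gives that $\Omega = \sT \circ \sD$ is a braided monoidal equivalence $\YD{A}(\cC) \rightarrow \YD{B}(\cC)$. Its monoidal constraint is $\Omega_2(X,Y) = \sT(\sD_2(X,Y)) \circ \sT_2(\sD X, \sD Y)$, which collapses to $\sT_2(\sD X, \sD Y) = c^{\YD{}}_{Y,X} \circ c^{-1}_{Y,X}$ because $\sD$ is strict, so the graphical form displayed before the theorem is read off directly. The argument for $\Omega' = \sT' \circ \sD$ is identical, now using the braided monoidal equivalence $\sT'$ of Remark~\ref{rem:side_switch_functor}, and the isomorphism $\theta$ of that remark shows $\Omega \cong \Omega'$ as braided monoidal functors.

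I do not expect a serious obstacle, precisely because the statement is a summary of the earlier work; the one point deserving attention is to make sure that $\sT$ and $\sT'$ are genuinely \emph{equivalences} of categories and not merely monoidal functors. I would handle this by pointing to the isomorphism-of-categories statements for the constituent side switch functors in Lemma~\ref{lem:partial_side_switch_functor} and Remark~\ref{rem:twisting_functors}, together with the explicit inverse of $\sT_2$ recorded in the proof of Theorem~\ref{thm:side_switch_functor}, after which the desired conclusion follows with no further calculation.
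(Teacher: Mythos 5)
Your proposal is correct and takes essentially the same route as the paper: Theorem~\ref{thm:composedDuality} is obtained there precisely by composing the braided strict monoidal equivalence ${}_{\omega'}^{\omega}\sD$ of Corollary~\ref{cor:pairing_functor} with the side switch functors $\sT$ and $\sT'$ of Theorem~\ref{thm:side_switch_functor} and Remark~\ref{rem:side_switch_functor}, the monoidal structure collapsing to $c^{\YD{}}_{Y,X}\circ c^{-1}_{Y,X}$ exactly as you compute. Your extra care in checking that $\sT$ and $\sT'$ are genuine equivalences, via their factorization through the invertible functors of Lemma~\ref{lem:partial_side_switch_functor} and Remark~\ref{rem:twisting_functors}, matches the paper's implicit argument.
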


We end this subsection by relating the equivalence $\Omega$  
to the equivalence $\Omega^{\rm HS}$ of rational modules over 
$\Bbbk$-Hopf algebras discussed in \cite{HS13}.

\begin{remark}
\begin{enumerate}
\item
Let $\Bbbk$ be a field and $\mathcal{L}_\Bbbk$ the category of 
linearly topologized vector spaces over $\Bbbk$. Fix a Hopf
algebra $h$ in $\mathcal{L}_\Bbbk$ and two Hopf algebras
$(R,R^\vee)$ in $\YD{h}(\mathcal{L}_\Bbbk)$ that are related
by a non-degenerate Hopf pairing. It is then shown in
\cite{HS13} that the categories $\YD{R \rtimes h}_{\rm rat}$ 
and $\YD{R^\vee \rtimes h}_{\rm rat}$ are equivalent as 
braided categories. Here,
the subscript ${\rm rat}$ denotes the subcategory of rational modules.

The non-degenerate pairing $\langle , \rangle : R^\vee \otimes R
\rightarrow \Bbbk$ and the structural morphisms of the 
bosonized Hopf algebra $R\rtimes h$ are used in
\cite[Theorem 7.1]{HS13} to construct a functor 
\[
(\Omega^{\rm HS}, \Omega^{\rm HS}_2) : \YD{R \rtimes h}_{\rm rat} \rightarrow
\YD{R^\vee \rtimes h}_{\rm rat} \,\,.
\]

In detail, the functor $\Omega^{\rm HS}$ is constructed
as follows:
Let $M$ be a rational $(R \rtimes h)$-Yetter-Drinfel'd modules and denote the left $R$-coaction by 
$\delta(m) = m_{\langle -1 \rangle} \otimes m_{\langle 0 \rangle}$.\\
The $(R^\vee\! \rtimes h)$-Yetter-Drinfel'd module $\Omega^{{\rm HS}}(M)$ is equal 
to $M$ as an $h$-Yetter-Drinfel'd module
and has the following $R^\vee$-Yetter-Drinfel'd structure
\begin{align*}
\text{action: } & &\xi m &= \langle \xi, m_{\langle -1 \rangle} \rangle m_{\langle 0\rangle} \\
\text{coaction: } & &\delta_{\Omega^{\rm HS}(M)} &= \left( c^{\YD{}}_{M,R^\vee} \circ c^{\YD{}}_{R^\vee,M} \right) (m_{[-1]} \otimes m_{[0]}),
\end{align*}
where $m_{[-1]} \otimes m_{[0]}$ is the unique element of $R^\vee \otimes M$ such that for all $r \in R$ and $m \in M$ we have
\[
rm = \left\langle m_{[-1]}, \theta_R(r)\right\rangle m_{[0]}.
\]
The monoidal structure of $\Omega^{\rm HS}$ is given by the 
family of morphisms
\begin{align*}
\Omega^{\rm HS}_2(M,N) : \Omega^{\rm HS}(M) \otimes \Omega^{\rm HS}(N) &\rightarrow \Omega^{\rm HS}(M \otimes N)\\
m \otimes n &\mapsto S_{R \rtimes h}^{-1}S_R(n_{\langle -1 \rangle})m \otimes n_{\langle 0 \rangle}.
\end{align*}
\item
In this paper, we started with a non-degenerate Hopf pairing 
$\omega: A \otimes B \rightarrow \unit$ and constructed 
an equivalence
\[
\Omega^\omega : \YD{A}(\cC) \rightarrow \YD{B}(\cC).
\]
Let $\cC$ be the category of finite dimensional Yetter-Drinfel'd modules over the finite dimensional Hopf algebra $h$.
Set $A=R$ and $B=R^\vee$ and $\omega : A \otimes B \rightarrow \Bbbk$, 
such that $\omega^-(b \otimes a) = \langle b , a \rangle$, cf. Example \ref{example:HopfPairings}. 
One can show by straight-forward computations, that our functor $\Omega^{\omega^-}$
coincides with the functor $\Omega^{\rm HS}$ on the full subcategory $\YD{R \rtimes h}_{\rm fin} \subset \YD{R \rtimes h}_{\rm rat}$
of finite dimensional $(R \rtimes h)$-Yetter-Drinfel'd modules.
\end{enumerate}
\end{remark}

\subsection{The square of \texorpdfstring{$ \Omega$}{Omega}}
\label{prop:OmegaSquare}
{}From a non-degenerate Hopf pairing $\omega : A \otimes B
\rightarrow \unit$, we obtained an equivalence
$\Omega^\omega : \YD{A}(\cC) \rightarrow \YD{B}(\cC)$. 
As noted in Example \ref{example:HopfPairings}, we also 
have a non-degenerate Hopf pairing $\omega^- : B \otimes A \rightarrow \unit$
from which we obtain an equivalence $\Omega^{\omega^-} : \YD{B}(\cC) \rightarrow
\YD{A}(\cC)$.

\begin{proposition}
The braided monoidal functor 
\[
\Omega^{\omega^-} \circ \Omega^\omega :  \YD{A}(\cC) \rightarrow \YD{A}(\cC)
\] 
is isomorphic to the identity functor.
\end{proposition}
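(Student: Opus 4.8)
The plan is to produce a monoidal natural isomorphism between $\Omega^{\omega^-}\circ\Omega^\omega$ and the identity functor on $\YD{A}(\cC)$ out of the family $\theta_X = \rho_X\circ(S\otimes\id_X)\circ\delta_X$ introduced in Lemma~\ref{lem:properties_of_theta}.

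First I would observe that each elementary functor occurring in $\Omega^\omega$ and $\Omega^{\omega^-}$, namely the side-switch functor $\sT$ of Theorem~\ref{thm:side_switch_functor} and the pairing functor $\sD$ of Corollary~\ref{cor:pairing_functor}, is the identity on underlying objects of $\cC$. Consequently $(\Omega^{\omega^-}\circ\Omega^\omega)(X)$ and $X$ share the same underlying object, and the statement amounts to comparing the two $A$-Yetter-Drinfel'd structures on this object and showing that $\theta_X$ intertwines them.

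Next I would compute the $A$-action and $A$-coaction of $(\Omega^{\omega^-}\circ\Omega^\omega)(X)$ by stacking the graphical formulas for $\Omega$ displayed before Theorem~\ref{thm:composedDuality}, once for $\omega$ and once for $\omega^-$. To expose the cancellations I would rewrite $\omega^-$ and its inverse copairing through the formulas of the Remark following Example~\ref{example:HopfPairings},
\[
\omega^- = \omega\circ c^{-1}_{A,B}\circ(S^{-1}\otimes S^{-1}),\qquad (\omega^-)' = (S\otimes S)\circ c_{B,A}\circ\omega',
\]
so that every pairing and copairing is expressed through $\omega$ and $\omega'$. The diagram then carries two copies each of $\omega$ and $\omega'$ along with antipodes of $B$; sliding the $B$-antipodes across the pairings by $\omega\circ(S\otimes\id) = \omega\circ(\id\otimes S)$ and then contracting by the non-degeneracy relations $(\omega\otimes\id_A)\circ(\id_A\otimes\omega') = \id_A$ and $(\id_B\otimes\omega)\circ(\omega'\otimes\id_B) = \id_B$ removes the $B$-strand altogether. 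What survives is the original $\rho$ and $\delta$ decorated by a double braiding and residual antipodes of $A$, which I expect to match exactly the expressions in parts~(1) and~(2) of Lemma~\ref{lem:properties_of_theta}; equivalently, the computed structures are the conjugates of $(\rho,\delta)$ by $\theta_X$.

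Granting this, parts~(1) and~(2) of Lemma~\ref{lem:properties_of_theta}, namely $\theta_X\circ\rho_X = \rho_X\circ c_{X,A}\circ c_{A,X}\circ(S^2\otimes\id)$ and $\delta_X\circ\theta_X = (S^2\otimes\id)\circ c_{X,A}\circ c_{A,X}\circ\delta_X$, say precisely that $\theta_X$, which is invertible in $\cC$, is a morphism of Yetter-Drinfel'd modules between $X$ and $(\Omega^{\omega^-}\circ\Omega^\omega)(X)$; hence it is an isomorphism in $\YD{A}(\cC)$, and its naturality is immediate from the definition of $\theta$ through the structure morphisms. It remains to check that $\theta$ respects the monoidal structures, i.e.\ the coherence square relating $\theta_{X\otimes Y}$, $\theta_X\otimes\theta_Y$ and the constraint $(\Omega^{\omega^-}\circ\Omega^\omega)_2(X,Y)$, the identity functor being equipped with the trivial constraint. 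Writing this constraint out as $\Omega^{\omega^-}(\Omega^\omega_2(X,Y))\circ\Omega^{\omega^-}_2(\Omega^\omega X,\Omega^\omega Y)$, inserting $\Omega_2(X,Y) = c^{\YD{}}_{Y,X}\circ c^{-1}_{Y,X}$, and using that $\Omega^{\omega^-}$ is a strict braided functor, the coherence square collapses to the single identity
\[
c^{\YD{}}_{Y,X}\circ\theta_{Y\otimes X}\circ c^{\YD{}}_{X,Y} = c_{Y,X}\circ(\theta_Y\otimes\theta_X)\circ c_{X,Y}
\]
of part~(3) of Lemma~\ref{lem:properties_of_theta}. Because both $\Omega^{\omega^-}\circ\Omega^\omega$ and $\Id$ are braided monoidal functors (Theorem~\ref{thm:composedDuality}), this monoidal isomorphism is automatically braided.

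The main obstacle is the diagrammatic simplification of the third paragraph. The two side-switches together with the formulas for $\omega^-$ and $(\omega^-)'$ introduce several antipodes of both $A$ and $B$, and the delicate point is to show that, once the $B$-pairings have annihilated one another, the leftover $A$-antipodes assemble into exactly $\theta_X$ — a single $S^2$ together with the double braiding — rather than into some other combination. This requires an order-sensitive use of the Yetter-Drinfel'd condition in concert with the explicit inverse copairing of $\omega^-$.
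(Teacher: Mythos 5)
Your proposal is correct and coincides with the second (``alternative'') argument in the paper's own proof: an explicit computation showing that $\Omega^{\omega^-}\circ\Omega^\omega$ sends $(X,\rho,\delta)$ to the structure twisted by the double braiding of $\cC$ and $S^{\pm2}$, with monoidal constraint $c^{\YD{}}_{Y,X}\circ c^{\YD{}}_{X,Y}\circ c^{-1}_{X,Y}\circ c^{-1}_{Y,X}$, after which parts (1)--(3) of Lemma~\ref{lem:properties_of_theta} exhibit $\theta_X=\rho_X\circ(S\otimes\id_X)\circ\delta_X$ as the desired monoidal natural isomorphism. You merely spell out the cancellation of $\omega$, $\omega'$ and the $B$-antipodes that the paper compresses into ``a concrete calculation'', so the two arguments are essentially identical.
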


\begin{proof}
A direct computation shows that the monoidal functors
\begin{align*}
(\Omega^\omega, \Omega^\omega_2 ) \circ \left((\Omega')^{\omega^-}, (\Omega')^{\omega^-}_2\right) \quad \text{and} \quad
\left((\Omega')^{\omega^-}, (\Omega')^{\omega^-}_2\right) \circ (\Omega^\omega, \Omega^\omega_2 )
\end{align*}
are both equal to the identity functor with identity monoidal structure.
Remark \ref{rem:side_switch_functor} implies that $(\Omega')^{\omega^-}$ is monoidally isomorphic to $\Omega^{\omega^-}$.

Alternatively, a concrete calculation shows that $\Omega^{\omega^-} \circ \Omega^\omega$ is equal to the monoidal functor that sends
the Yetter-Drinfel'd module $(X, \rho, \delta)$ to the Yetter-Drinfel'd module
\[
(X, \rho \circ (S^{-2} \otimes \id_X ) \circ c^{-1}_{A,X} \circ c^{-1}_{X,A}, c_{X,A} \circ c_{A,X} \circ (S^{2} \otimes \id_X) \circ \delta ).
\]
The monoidal structure of $\Omega^{\omega^-} \circ \Omega^\omega$ is given by the family of isomorphisms
\[
c^{\YD{}}_{Y,X} \circ c^{\YD{}}_{X,Y} \circ c^{-1}_{X,Y} \circ c^{-1}_{Y,X}.
\]
{}From the this and Lemma \ref{lem:properties_of_theta} it is clear 
that $\theta_X := \rho_X \circ (S \otimes \id_X) \circ \delta_X$ defines a monoidal isomorphism
\[
\theta : \Omega^{\omega^-} \circ \Omega^\omega \rightarrow \Id.
\]
\end{proof}

  
\section{Partial dualization of a Hopf algebra}\label{sec:PartialDualization}

We now present the main construction of this article: Let $H$ be a Hopf algebra
in a braided category $\cC$, $A$ be a Hopf subalgebra
and $\pi:H\to A$ a Hopf algebra projection. Moreover, let
$B$ be a Hopf algebra in $\cC$ and $\omega:A\otimes B\to\unit$ a non-degenerate
Hopf pairing. 
These data constitute a \emph{partial dualization
datum} $\cA$ for the Hopf algebra $H$ to which we associate 
a \emph{partial dualization} $r_\cA(H)$,
a new Hopf algebra in the braided category $\cC$. The construction
makes use of the functors introduced in Section 3 that
relate various categories of Yetter-Drinfel'd modules.
We show  that the assignment $H\mapsto r_\cA(H)$ is 
involutive up
to an isomorphism. We also prove a fundamental equivalence of
braided categories
$$\YD{H}(\cC)\cong \YD{r_\cA(H)}(\cC).$$
This equivalence implies that the Drinfel'd doubles of $H$ and
$r_\cA(H)$ are Morita-equivalent Hopf algebras.

\subsection{Main construction}
We start with some definitions:

\begin{definition}\label{d:PartialDualization}
Let $\cC$ be a braided monoidal category.
A {\em partial dualization datum} $\cA=(H\stackrel{\pi}{\to}A,B,\omega)$
for a Hopf algebra $H$ in $\cC$ consists of
    \begin{itemize}
      \item a Hopf algebra projection $\pi:H\to A$ to a Hopf subalgebra
            $A\subset H$,
      \item a Hopf algebra $B$ with a non-degenerate Hopf pairing 
            $\omega: A\otimes B\rightarrow \unit_\cC$. 
    \end{itemize}   
\end{definition}
Given a partial dualization datum $\cA$ for a Hopf algebra $H$
in $\cC$, the {\em partial dualization} $r_\cA(H)$ is the
following Hopf algebra in $\cC$:
\begin{itemize}
\item 
By the Radford projection theorem \nref{thm:RadfordProjection},
the projection $\pi:H\to A$ induces a Radford biproduct 
decomposition of $H$
$$H\cong K\rtimes A \,\,, $$
where $K:=H^{\text{coin}(\pi)}$ is a Hopf algebra in the braided
category $\YD{A}(\cC)$.
\item 
The non-degenerate Hopf pairing $\omega:A\otimes B\to \unit$ 
induces by Theorem \nref{thm:composedDuality} a braided 
equivalence:
$$\Omega:\; \YD{A}\left(\cC\right)\stackrel\sim\to\YD{B}\left(\cC\right).$$
Thus, the image of the Hopf algebra $K$ in $\YD{A}(\cC)$ 
under the braided functor $\Omega$ is a Hopf algebra 
$L:=\Omega\left(K\right)$ in 
the braided category $\YD{B}\left(\cC\right)$.
\item 
The Radford biproduct from Definition
\nref{d:RadfordBiproduct} of $L$ over $B$ allows us
to introduce the partially
dualized Hopf algebra,
$$r_\cA(H):=L\rtimes B \,\, ,$$
which is a Hopf algebra in $\cC$.
As a Radford biproduct, it comes 
with a projection $\pi':r_\cA(H)\to B$. 
\end{itemize}

We summarize:

\begin{definition}
For a partial dualization datum 
$\cA = (H \xrightarrow{\pi} A, B, \omega)$, we call 
the Hopf algebra $r_\cA(H)$ in $\cC$
the \emph{partial dual} of $H$ with respect to $\cA$. 
\end{definition}

Our construction is inspired by the calculations in
\cite{HS13} using smash-products. In Section
\nref{sec:ExampleNicholsAlgebra}, we explain the relation of
these calculations to our general construction.

\subsection{Involutiveness of partial dualizations}
The Hopf algebra $r_\cA(H)$ comes with a projection
to the subalgebra $B$. The two Hopf pairings 
$\omega^\pm: B\otimes A\to \unit_\cC$ from
Example  \nref{example:HopfPairings} (2)
yield two possible partial dualization data for $r_\cA(H)$:
\begin{align*}
  \cA^+&=(r_\cA(H)\stackrel{\pi'}{\to}B,A,\omega^+)\\
  \cA^-&=(r_\cA(H)\stackrel{\pi'}{\to}B,A,\omega^-) \,\, .
\end{align*}
Recall from Subsection \nref{prop:OmegaSquare} the natural 
isomorphism 
  $$\theta:\;\Omega^{\omega^-} \circ \Omega^{\omega}\cong \Id_{\YD{A}(\cC)}
  \,\,.
$$  
In a similar way, one has a natural isomorphism
$$\tilde\theta:\; \Omega^{\omega} \circ \Omega^{\omega^+}\cong \Id_{\YD{B}(\cC)}.$$
\begin{corollary}\label{corr:involutive}
The two-fold partial dualization 
$r_{\cA^-}(r_\cA(H))$ is isomorphic to $H$, as  Hopf algebra in the
braided category $\cC$. A
non-trivial isomorphism of Hopf algebras is 
    $$r_{\cA^-}(r_\cA(H))=\Omega^{\omega^-}(\Omega^{\omega}(K))\rtimes A
    \xrightarrow{\;\theta_K\otimes id_A\;}K\rtimes A= H,$$
with $\theta_K=\rho_K\circ(S_A\otimes id_K)\circ\delta_K$ as 
    in Lemma \nref{lem:properties_of_theta}.
\end{corollary}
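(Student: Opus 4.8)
The plan is to unwind the construction of the iterated partial dualization and then transport the natural isomorphism $\theta$ from Subsection \ref{prop:OmegaSquare} through the bosonization functor, so that no new diagrammatic computation is required.

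First I would spell out $r_{\cA^-}(r_\cA(H))$ explicitly. By construction $r_\cA(H) = L \rtimes B$ with $L = \Omega^\omega(K)$, and this Radford biproduct comes with the projection $\pi' : r_\cA(H) \to B$. Applying the Radford projection theorem \ref{thm:RadfordProjection} to $\pi'$ recovers the coinvariants as the Hopf algebra $L$ in $\YD{B}(\cC)$; here I use that the Radford biproduct and the projection theorem are mutually inverse, so that the coinvariants of a bosonization $L \rtimes B$ are again $L$. The datum $\cA^-$ prescribes the non-degenerate Hopf pairing $\omega^- : B \otimes A \to \unit$, hence by Theorem \ref{thm:composedDuality} the braided equivalence $\Omega^{\omega^-} : \YD{B}(\cC) \to \YD{A}(\cC)$, and the partial dual is the bosonization
\[
r_{\cA^-}(r_\cA(H)) = \Omega^{\omega^-}(L) \rtimes A = \Omega^{\omega^-}\bigl(\Omega^\omega(K)\bigr) \rtimes A ,
\]
a Hopf algebra in $\cC$.

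The key step is to recognize $\theta_K$ as an isomorphism of Hopf algebras in $\YD{A}(\cC)$. Since $\Omega^\omega$ and $\Omega^{\omega^-}$ are strong braided monoidal functors, they send Hopf algebras to Hopf algebras; the Hopf algebra structure on $\Omega^{\omega^-}(\Omega^\omega(K))$ used above is precisely the one transported through the composite functor $F := \Omega^{\omega^-} \circ \Omega^\omega$, because transporting a Hopf structure through a composite equals composing the two transports. By Subsection \ref{prop:OmegaSquare}, $\theta : F \to \Id_{\YD{A}(\cC)}$ is a monoidal natural isomorphism. A general fact about monoidal natural transformations then applies: if $\eta : F \to G$ is a monoidal natural isomorphism of strong monoidal functors and $K$ is a Hopf algebra, the component $\eta_K$ is an isomorphism of Hopf algebras — compatibility with multiplication and unit follows from naturality of $\eta$ together with the monoidal-transformation axiom, and compatibility with comultiplication and counit follows by the same argument using the inverses of the tensor constraints. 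Applied to $\eta = \theta$ and $G = \Id$, this shows that $\theta_K : \Omega^{\omega^-}(\Omega^\omega(K)) \to K$ is an isomorphism of Hopf algebras in $\YD{A}(\cC)$.

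Finally I would push this isomorphism through the bosonization over $A$. Any isomorphism of Hopf algebras $f : K_1 \to K_2$ in $\YD{A}(\cC)$ — in particular one that is $A$-linear and $A$-colinear — induces, by inspection of the structure morphisms in Definition \ref{d:RadfordBiproduct}, an isomorphism of Hopf algebras $f \rtimes \id_A : K_1 \rtimes A \to K_2 \rtimes A$ in $\cC$. Taking $f = \theta_K$ and using $K \rtimes A = H$ yields the claimed isomorphism
\[
r_{\cA^-}(r_\cA(H)) = \Omega^{\omega^-}(\Omega^\omega(K)) \rtimes A \xrightarrow{\;\theta_K \otimes \id_A\;} K \rtimes A = H .
\]
I expect the main obstacle to be the bookkeeping in the first step: one must verify carefully that the datum $\cA^-$ does produce the coinvariant Hopf algebra $L$ (and not a twisted variant) and that the Hopf structure arising in the construction of $r_{\cA^-}(r_\cA(H))$ agrees with the one obtained from the composite functor $F$. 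Once these identifications are in place, the monoidality of $\theta$ does all the remaining work.
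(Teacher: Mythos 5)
Your proposal is correct and follows essentially the same route as the paper: the paper states the corollary as a direct consequence of the monoidal natural isomorphism $\theta : \Omega^{\omega^-} \circ \Omega^{\omega} \cong \Id_{\YD{A}(\cC)}$ from Subsection \nref{prop:OmegaSquare}, and you supply exactly the implicit steps it relies on (coinvariants of the bosonization $L \rtimes B$ recover $L$, monoidal natural isomorphisms between braided strong monoidal functors induce Hopf algebra isomorphisms, and $\theta_K \otimes \id_A$ respects the Radford biproduct structure). Your explicit attention to the transported Hopf structure agreeing with the composite-functor transport is the right bookkeeping point, and nothing in the argument fails.
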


\subsection{Relations between the representation categories}
\label{sec:YetterDrinfeldsCoincide}

It is natural to look for relations between categories of 
representations of a Hopf algebra $H$ in $\cC$ and its
partial dualization $r_\cA(H)$:

\begin{theorem}\label{thm:YetterDrinfeldsCoincide}
  Let $H$ be a Hopf algebra in a braided category $\cC$, let
  $\cA=(H\stackrel{\pi}{\to}A,B,\omega)$ be a partial dualization datum and
  $r_\cA(H)$ the partially dualized Hopf algebra. Then
  the equivalence of braided categories
  $$\Omega:\;\YD{A}(\cC )\to \YD{B}(\cC)$$
 from Theorem \nref{thm:composedDuality} induces an {\em equivalence of braided categories}:  
  $$\YD{H}\left(\cC\right)
  \cong\YD{K}\left(\YD{A}\left(\cC\right)\right)
  \xrightarrow{\quad\tilde{\Omega}\quad}
    \YD{L}\left(\YD{B}\left(\cC\right)\right)
  \cong\YD{r_\cA(H)}\left(\cC\right).$$
\end{theorem}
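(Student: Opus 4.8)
The plan is to read the displayed chain as a composite of three braided monoidal equivalences and to verify each in turn. The two outer isomorphisms are instances of the Bosonization Theorem \ref{thm:RadfordYDCategories}: applied to the Radford decomposition $H\cong K\rtimes A$ it gives $\YD{H}(\cC)\cong\YD{K}(\YD{A}(\cC))$, and applied to $r_\cA(H)=L\rtimes B$ it gives $\YD{r_\cA(H)}(\cC)\cong\YD{L}(\YD{B}(\cC))$. These are braided by the cited theorem, so all the real content lies in constructing the middle functor $\tilde\Omega$ and showing it is a braided equivalence.

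For the middle functor I would isolate the following general principle, of which $\tilde\Omega$ is the special case at hand: any braided monoidal equivalence $(F,F_2):\cD\to\cD'$ carries a Hopf algebra $K$ in $\cD$ to a Hopf algebra $F(K)$ in $\cD'$ and induces a braided monoidal equivalence $\tilde F:\YD{K}(\cD)\to\YD{F(K)}(\cD')$. On objects $\tilde F$ sends a Yetter-Drinfel'd module $(X,\rho,\delta)$ to $F(X)$ equipped with the action $F(\rho)\circ F_2(K,X)$ and the coaction $F_2(K,X)^{-1}\circ F(\delta)$; on morphisms it is simply $F$. I would then apply this with $F=\Omega$, $\cD=\YD{A}(\cC)$ and $\cD'=\YD{B}(\cC)$. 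Since $L=\Omega(K)$ by the very construction of $r_\cA(H)$, this yields exactly $\tilde\Omega:\YD{K}(\YD{A}(\cC))\to\YD{L}(\YD{B}(\cC))$, and its monoidal structure is inherited from $\Omega_2$.

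Establishing this general principle is the crux. That $F(K)$ is again a Hopf algebra, and that the transported $\rho$ and $\delta$ define a module and a comodule over $F(K)$, are formal consequences of $F$ being strong monoidal together with the coherence of $F_2$ and naturality; these I would record with short diagram computations. The essential point, and the main obstacle, is that both the Yetter-Drinfel'd compatibility and the braiding \eqref{YD-braiding} are built from the braiding of $\cD$, so their transport hinges on $F$ being \emph{braided}: inserting $F_2$ and using $c^{\cD'}_{F(X),F(Y)}\circ F_2(X,Y)=F_2(Y,X)\circ F(c^{\cD}_{X,Y})$ turns the YD condition for $(X,\rho,\delta)$ into the YD condition for $\tilde F(X)$, and likewise identifies $\tilde F(c^{\YD{}}_{X,Y})$ with $c^{\YD{}}_{\tilde F(X),\tilde F(Y)}$. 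Because $\Omega$ carries the nontrivial monoidal structure $\Omega_2$, one must track these constraints throughout rather than treat $\Omega$ as strict. Finally, since $\Omega$ is an equivalence, a quasi-inverse induces a quasi-inverse to $\tilde\Omega$; concretely $\widetilde{\Omega^{\omega^-}}$ together with the natural isomorphism of Subsection \ref{prop:OmegaSquare} does the job, so $\tilde\Omega$ is a braided equivalence. Composing it with the two bosonization equivalences then yields the asserted braided equivalence $\YD{H}(\cC)\cong\YD{r_\cA(H)}(\cC)$.
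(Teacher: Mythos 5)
Your proposal is correct and follows essentially the same route as the paper: the outer isomorphisms via the Radford decomposition together with the Bosonization Theorem \ref{thm:RadfordYDCategories}, and the middle equivalence $\tilde\Omega$ induced by the braided equivalence $\Omega$ applied to $K$ with $L=\Omega(K)$. The only difference is that you spell out in detail the general transport principle (how $F_2$ carries the action, coaction, YD condition and braiding, and why braidedness of $F$ is essential), which the paper's proof treats as implicit in the phrase ``the braided equivalence $\Omega$ induces an equivalence $\tilde\Omega$''.
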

\begin{proof}
The Hopf algebra $L\in\YD{B}(\cC)$ was defined as the image of
$K\in\YD{A}(\cC)$ under the
functor $\Omega$, i.e.\
$L=\Omega(K)$. The braided equivalence $\Omega$ induces an
equivalence $\tilde\Omega$ of Yetter-Drinfel'd modules over the Hopf algebra 
$K$ in the braided category $\YD{A}(\cC)$ to 
Yetter-Drinfel'd modules over the Hopf algebra 
$L=\Omega(K)$ in $\YD{B}(\cC)$
  \begin{align*}
  \YD{K}\left(\YD{A}\left(\cC\right)\right)
  \xrightarrow{\quad\tilde{\Omega}\quad}
    &\YD{\Omega\left(K\right)}\left(\YD{B} \left(\cC\right)\right)\\
  &=:\YD{L}\left(\YD{B} \left(\cC\right)\right) .
\end{align*}
By Theorem \nref{thm:RadfordProjection}, the source category
of $\tilde\Omega$ is 
$$\YD{K}\left(\YD{A}\left(\cC\right)\right)
\cong\YD{K\rtimes A}\left(\cC\right)
=\YD{H}\left(\cC\right)\,\,. $$
Similarly, we have for the target category of $\tilde\Omega$
$$ \YD{L}\left(\YD{B}\left(\cC\right)\right)
\cong\YD{L\rtimes B}\left(\cC\right)
=\YD{r_\cA(H)}\left(\cC\right) \,\, . $$
Altogether, we obtain a braided equivalence
$$\YD{H}\left(\cC\right)
  \cong\YD{K}\left(\YD{A}\left(\cC\right)\right)
    \xrightarrow{\;\tilde{\Omega}\;}
    \YD{L}\left(\YD{B}\left(\cC\right)\right)
  \cong\YD{r_\cA(H)}\left(\cC\right).$$
\end{proof}

If $\cC$ is the category of vector spaces over a field
$\Bbbk$, Yetter-Drinfeld modules over a Hopf algebra
$H$ can be described as modules over the Drinfel'd double
$\cD(H)$. For Hopf algebra $H$ in a general braided category $\cC$, 
the appropriate notion of a Drinfel'\(f\)d double $\cD(H)$
has been introduced in \cite{BV12} such that 
a braided equivalence
$\lMod{\cD(H)}{\cC} \cong \YD{H}\left(\cC\right)$ holds.
Hence Theorem \ref{thm:YetterDrinfeldsCoincide} implies

\begin{corollary}\label{cor:DoublesCoincide}
The categories of left modules over the Drinfeld double $\cD(H)$ of a 
Hopf algebra $H$ and over the Drinfel'd double $\cD(r_\cA(H))$ of
its partial dualization $r_\cA(H)$ 
are braided equivalent. 
\end{corollary}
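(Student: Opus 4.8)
The plan is to obtain the statement as a concatenation of three braided equivalences, since all the substantial work has already been carried out in Theorem \ref{thm:YetterDrinfeldsCoincide} and in \cite{BV12}. First I would invoke the result of \cite{BV12} quoted immediately above the statement: for a Hopf algebra $H$ in the braided category $\cC$ (under whatever finiteness and duality hypotheses are required for the double $\cD(H)$ to be defined), there is a braided equivalence
\[
\lMod{\cD(H)}{\cC} \;\cong\; \YD{H}(\cC).
\]
Applying this both to $H$ itself and to its partial dualization $r_\cA(H)$ produces braided equivalences at the two ends of the desired chain, reducing the problem to comparing the two categories of Yetter-Drinfel'd modules.

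Next I would splice in Theorem \ref{thm:YetterDrinfeldsCoincide}, which already supplies the braided equivalence $\tilde\Omega$ between $\YD{H}(\cC)$ and $\YD{r_\cA(H)}(\cC)$. Composing the three equivalences gives
\[
\lMod{\cD(H)}{\cC}
\;\cong\; \YD{H}(\cC)
\;\xrightarrow{\ \tilde\Omega\ }\; \YD{r_\cA(H)}(\cC)
\;\cong\; \lMod{\cD(r_\cA(H))}{\cC},
\]
and since a composite of braided monoidal equivalences is again a braided monoidal equivalence, this yields the claimed braided equivalence of module categories.

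The proof is therefore essentially formal; the only points requiring a moment's care are that the equivalence from \cite{BV12} is genuinely \emph{braided} (so that braidedness is preserved under the composition, rather than merely the monoidal structure), and that the hypotheses guaranteeing existence of $\cD(H)$ also guarantee existence of $\cD(r_\cA(H))$ for the partial dualization. Both are immediate from how the double is set up in \cite{BV12}, so I do not expect any genuine obstacle here: the entire content of the corollary is the middle equivalence of Theorem \ref{thm:YetterDrinfeldsCoincide}, and this corollary simply transports that result across the $\cD(-)$-to-$\YD{}(-)$ dictionary.
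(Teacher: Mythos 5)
Your proposal is correct and coincides with the paper's argument: the corollary there is likewise obtained by applying the braided equivalence $\lMod{\cD(H)}{\cC}\cong\YD{H}(\cC)$ from \cite{BV12} to both $H$ and $r_\cA(H)$ and composing with the equivalence $\tilde\Omega$ of Theorem \ref{thm:YetterDrinfeldsCoincide}. Your two points of care (braidedness of the \cite{BV12} equivalence, existence of the double for $r_\cA(H)$) are exactly the hypotheses the paper tacitly relies on, so there is nothing to add.
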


\section{Examples}\label{sec:Examples}

We illustrate our general construction in three different
cases:

\subsection{The complex group algebra of a semi-direct product}\label{sec:ExampleGroupring}
For the complex Hopf algebra associated to a finite group $G$,
we take
  $$\cC=\text{vect}_\C \qquad H=\C[G].$$
To get a partial dualization datum for $H$, suppose that there
is a split extension $N\to G\to Q$, which allows
us to identify $Q$ with a subgroup of $G$, i.e.\
$G=N\rtimes Q$. 
We then get a split Hopf algebra projection to
  $A:=\C[Q]$:
  $$\pi:\;\C[G]\to\C[Q].$$
The coinvariants of $H$ with respect to $\pi$, which
by Theorem \nref{thm:RadfordProjection} have the structure of
a Hopf algebra $K\in \YD{A}(\cC)$, turn out to be
  $$K:=H^{\text{coin}(\pi)}=\C[N]\,\,.$$ 
The $A$-coaction on the $A$-Yetter-Drinfel'd module $K$ is trivial,
since the Hopf algebra $H$ is cocommutative. 
The $A$-action on $K$ is non-trivial; it is given by the 
action of $Q\subset G$ on the normal subgroup $N$. 
Because of the trivial $A$-coaction, the 
self-braiding of $K$ in $\YD{A}$ is trivial; thus $K$ is even 
a complex Hopf algebra. Writing $H$ as in 
Theorem \nref{thm:RadfordProjection} as a Radford biproduct,
we recover 
$$H = K\rtimes A = \C[N] \rtimes \C[Q].$$
Since the $A$-coaction on $K$ is trivial, the
coalgebra structure is just given by the tensor product
of the coalgebra structures on the group algebras.

As the dual of $A$, we take the commutative Hopf algebra of
functions on $Q$, $B:=\C^Q$; 
we denote its canonical basis
by $(e_q)_{q\in Q}$; the Hopf pairing $\omega$ 
is the canonical evaluation. This gives the partial 
dualization datum 
$$\cA=(\C[G]\xrightarrow{\pi}\C[Q],\C^Q,\omega) \,\,.$$
Since the coaction of $A$ on $K$ is trivial, the morphism 
$\Omega_2(K,K)$ from the monoidal structure on $\Omega$
is trivial. Hence the functor 
  $\Omega^\omega$ maps $K$ to the same complex Hopf
algebra  
  $$L:=\Omega^\omega(K)\cong \C[N]\,\, ,$$
which however has now to be seen as  a Yetter-Drinfel'd module
over $\C^Q$, i.e.\ $L\in  \YD{\C^Q}$: $L$ has trivial action 
of $B=\C^Q$ and the
coaction is given by the dualized action of $Q$ on $N$ 
  $$n\longmapsto \sum_{q\in Q} e_q\otimes q^{-1}nq.$$
The partial dualization $r_\cA(H)$ is, by definition, 
the Radford biproduct 
  $$r_\cA(H)=L\rtimes B=\C[N]\rtimes \C^Q.$$
In this biproduct, the algebra structure is given by the 
tensor product of algebras.

An $H$-module is a complex $G$-representation.
To give an
alternative description of the category $\lMod{r_\cA(H)}{}$,
we make the definition of $r_\cA(H)$-modules explicit:
An $r_\cA(H)$-module $V$, with $r_\cA(H)=\C[N]\rtimes \C^Q$
has the structure of a $\C^Q$-module and thus of a
$Q$-graded vector space: $V=\bigoplus_{q\in Q} V_q$.
Moreover, it comes with an action of $N$ denoted by
$n.v$ for $n\in N$ and $v\in V$. Since the algebra structure
is given by the tensor product of algebras, the $N$-action preserves
the $Q$-grading.
The tensor product of two $r_\cA(H)$-modules $V$ and $W$
is graded in the obvious way,
$$(V\otimes W)_q=\bigoplus_{q_1q_2=q} V_{q_1}\otimes W_{q_2}.$$
The non-trivial comultiplication 
      $$\Delta_{\C[N]\rtimes \C^Q}(n)=\sum_{q\in Q}(n\otimes
      e_q)\otimes(q^{-1}nq\otimes 1)$$
for the Radford biproduct 
implies a non-trivial $N$-action on the tensor product:
on homogeneous components $V_{q_1}$ and $W_{q_2}$, with
$q_1,q_2\in Q$, we have for $n\in N$
$$n.(V_{q_1}\otimes W_{q_2})=(n.V_{q_1})\otimes((q_1^{-1}nq_1).V_{q_2}).$$
We are now in a position to give the alternative description
of the category $\lMod{r_\cA(H)}{}$.
We denote by ${\rm vect}_G$ the monoidal category of $G$-graded
finite-dimensional
complex vector spaces, with the monoidal structure inherited from the
category of vector spaces. Representatives of the isomorphism
classes of simple objects are given by the one-dimensional
vector spaces $\C_g$ in degree $g\in G$.
Given a subgroup $N\leq G$, the object
$\C[N]:=\oplus_{n\in N}\C_n$ has a natural structure
of an associative, unital algebra in ${\rm vect}_G$.
It is thus possible to consider $\C[N]$-bimodules in the
monoidal category ${\rm vect}_G$; together with the tensor
product $\otimes_{\C[N]}$, these bimodules form a monoidal
category $\Bimod{\C[N]}{{\rm vect}_G}$.
In this setting, we have the following description of
the category $\lMod{r_\cA(H)}{}$:

  \begin{lemma}
      The monoidal category $\lMod{r_\cA(H)}{}$  is monoidally
      equivalent to the category of $\Bimod{\C[N]}{{\rm vect}_G}$.
  \end{lemma}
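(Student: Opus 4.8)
The plan is to construct an explicit monoidal equivalence by hand, exploiting that the homogeneous components $\C_n$ of the algebra $\C[N]\in{\rm vect}_G$ are invertible group-like elements. First I would record the objects on both sides. A $\C[N]$-bimodule $W$ in ${\rm vect}_G$ is a $G$-graded space $W=\bigoplus_{g\in G}W_g$ with commuting grading-shifting left and right actions; since left multiplication by $\C_n$ is an isomorphism $W_g\xrightarrow{\sim}W_{ng}$ and $G=N\rtimes Q$ decomposes every $g$ uniquely as $g=nq$, the left action makes $W$ free over $\C[N]$ and $W$ is reconstructed from its degree-$Q$ part $\bigoplus_{q\in Q}W_q$. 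On the other side, an $r_\cA(H)=\C[N]\rtimes\C^Q$-module is exactly a $Q$-graded space with a grading-preserving $N$-action, as spelled out in the text.

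This suggests the functor $F:\Bimod{\C[N]}{{\rm vect}_G}\to\lMod{r_\cA(H)}{}$, $F(W):=\bigoplus_{q\in Q}W_q$, where the $\C^Q$-module structure is the $Q$-grading and the $N$-action on $W_q$ is obtained by transporting the right $\C[N]$-action back into degree $q$ via the invertible left action (with an inversion to make it a left rather than a right action): the right action of $\C_n$ lands in $W_{qn}=W_{(qnq^{-1})q}$, and the left action of $\C_{(qnq^{-1})^{-1}}$ returns to $W_q$, where normality of $N$ guarantees that we stay over the coset $Nq$. A quasi-inverse is the induction functor $\C[N]\otimes V\mapsfrom V$, with $\C_n\otimes V_q$ placed in degree $nq$, left action by multiplication on the first tensorand and right action given by the corresponding twisted formula. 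I would then check that the two composites are isomorphic to the respective identities; both are routine, using only invertibility of the $\C_n$ and the relation $qn=(qnq^{-1})q$ to straighten homogeneous elements into the degree-$Q$ part. As a reality check, both categories have the same simple objects indexed by $Q\times\widehat{N}$: a simple bimodule is supported on one double coset $Nq$ and restricts to a simple $N$-module on its degree-$q$ part, while $\C[N]\rtimes\C^Q\cong\prod_{q\in Q}\C[N]$ as algebras, so a simple module is an irreducible $N$-representation placed in a single degree.

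The substance of the lemma is the \emph{monoidal} structure, and this is the step I expect to be the main obstacle. The structure morphism $F_2(X,Y):F(X)\otimes F(Y)\to F(X\otimes_{\C[N]}Y)$ should be the straightening isomorphism induced by the multiplication of $\C[N]$: using left-freeness of the second factor, every element of $(X\otimes_{\C[N]}Y)_q$ with $q\in Q$ is uniquely $x\otimes_{\C[N]}y$ with $x\in X_{q_1}$, $y\in Y_{q_2}$ and $q_1q_2=q$, giving a $Q$-graded identification $F(X\otimes_{\C[N]}Y)\cong\bigoplus_{q_1q_2=q}X_{q_1}\otimes Y_{q_2}$. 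The key computation is to track the induced $N$-action: moving a homogeneous factor $\C_n$ across the relative tensor product from the left action of the second factor to the right action of the first conjugates it by the intervening $Q$-degree, and this is precisely the conjugation twist appearing in the Radford biproduct coproduct $\Delta_{\C[N]\rtimes\C^Q}(n)=\sum_{q}(n\otimes e_q)\otimes(q^{-1}nq\otimes1)$ that governs the $N$-action $n\cdot(v\otimes w)=(n\cdot v)\otimes((q_1^{-1}nq_1)\cdot w)$ on tensor products of $r_\cA(H)$-modules.

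The delicate point — the one I would be most careful about — is that these two conjugation twists agree rather than being inverse or transposed to one another: a naive identification produces the twist $qnq^{-1}$ on the \emph{first} factor instead of $q^{-1}nq$ on the second, so one must fix conventions consistently (the choice of coset representatives $q$ versus $q^{-1}$, the order of the two tensorands, and $S$ versus $S^{-1}$ in the induced action) so that $F_2$ really is $r_\cA(H)$-linear. Once the conventions are aligned, $F_2$ is an isomorphism of $r_\cA(H)$-modules, natural in $X$ and $Y$. Compatibility with the unit is immediate and clean: the tensor unit of $\Bimod{\C[N]}{{\rm vect}_G}$ is the regular bimodule $\C[N]$, whose only degree-$Q$ component is $(\C[N])_e=\C_e$, so $F(\C[N])\cong\C$ is the monoidal unit of $\lMod{r_\cA(H)}{}$. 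Finally, the associativity (pentagon) coherence for $F_2$ is inherited from the associativity of the relative tensor product $\otimes_{\C[N]}$, which matches the coassociativity underlying the Hopf-module tensor product; together with the unit isomorphism and the already-established equivalence of underlying categories, this exhibits $F$ as a monoidal equivalence.
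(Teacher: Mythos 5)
Your overall architecture coincides with the paper's proof: the paper also takes the functor $\Phi(W)=\bigoplus_{q\in Q}W_q$ from $\Bimod{\C[N]}{{\rm vect}_G}$ to $\lMod{r_\cA(H)}{}$, uses the canonical projection $B\otimes C\to B\otimes_{\C[N]}C$ as the monoidal structure (with the same straightening inverse $v\otimes w\mapsto(v\leftharpoonup n')\otimes((n')^{-1}\rightharpoonup w)$ you describe), obtains essential surjectivity from exactly your induction construction $D=\bigoplus_{q\in Q}\C[N]\otimes V_q$ with a conjugation-twisted right action, and handles the unit via $N\cap Q=\{1\}$. So the route is the same; the question is whether you carry the one non-routine step.

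You don't, and moreover the formula you tentatively write down is the failing one. Your transport ``right action of $n$, then left action of $(qnq^{-1})^{-1}$'' gives $v_q\mapsto(qnq^{-1})^{-1}\rightharpoonup(v_q\leftharpoonup n)$, which is a \emph{right} action ($m.(n.v)=(nm).v$); after the inversion you advertise, it becomes
\begin{equation*}
n.v_q=(qnq^{-1})\rightharpoonup v_q\leftharpoonup n^{-1},
\end{equation*}
which is the paper's action $n.v_q=n\rightharpoonup v_q\leftharpoonup(q^{-1}n^{-1}q)$ precomposed with the degree-dependent conjugation $n\mapsto qnq^{-1}$. With your action the canonical $F_2$ is \emph{not} $r_\cA(H)$-linear: on the bimodule freely generated by one element $x$ of degree $q_1$ (and $y$ of degree $q_2$), applying $n$ before projecting yields the basis vector with coefficient pattern $\bigl(q_1nq_1^{-1},\; n^{-1}q_2q_1^{-1}nq_1q_2^{-1},\; q_1^{-1}n^{-1}q_1\bigr)$ in $\C[N]\otimes x\otimes\C[N]\otimes y\otimes\C[N]$, while applying $n$ after projecting yields $\bigl(q_1q_2nq_2^{-1}q_1^{-1},\; e,\; n^{-1}\bigr)$; these differ whenever $q_2$ does not centralize $n$. (Testing only on $\C[G]$ itself, where both actions are trivial on the degree-$Q$ basis, hides this.) The correct convention is the mirror image of yours — transport the \emph{left} action back using the right action, $n.v_q:=n\rightharpoonup v_q\leftharpoonup(q^{-1}n^{-1}q)$ — because then in $(n.b)\otimes_{\C[N]}\bigl((q_1^{-1}nq_1).c\bigr)$ the right action $\leftharpoonup(q_1^{-1}n^{-1}q_1)$ on the first factor cancels the left action $(q_1^{-1}nq_1)\rightharpoonup$ on the second across the balanced tensor product, leaving exactly $n.(b\otimes_{\C[N]}c)$ for the biproduct action $n.(b\otimes c)=n.b\otimes(q_1^{-1}nq_1).c$. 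You correctly identified this as the delicate point, but ``fix conventions consistently'' defers precisely the computation that constitutes the substance of the lemma, and your stated choice of convention is the one for which it fails; as written, the proposal therefore has a genuine gap at its central step.
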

The braided equivalence of Yetter-Drinfel'd modules
over $H$ and $r_\cA(H)$ established in Theorem
\ref{thm:YetterDrinfeldsCoincide}, more precisely
the braided equivalence of the categories of modules over their
Drinfel'd doubles from  Corollary  \nref{cor:DoublesCoincide},
implies the braided equivalence   
  $$\cZ(\lMod{\mathbb{C}[G]}{})\cong \cZ(\text{vect}_G)\cong 
    \cZ(\Bimod{\C[N]}{\text{vect}_G})\,\, $$
which has been shown in \cite[Theorem 3.3]{Schau01} in a
more general context.

  \begin{proof}
It suffices to specify a monoidal functor 
 $$\Phi: \Bimod{\C[N]}{\text{vect}_G}\to
 \lMod{r_\cA(H)}{}$$
that is bijective on the spaces of morphisms and to give a preimage for
every  object $D\in \lMod{r_\cA(H)}{}$. 
Suppose that $B$ is a $\C[N]$-bimodule in the category
$\text{vect}_G$, i.e.\ $B=\bigoplus_{g\in G} B_g$,
with $\C[N]$-actions denoted by arrows $\rightharpoonup,\leftharpoonup$.

To define the functor $\Phi$ on objects,
consider for a bimodule $B$ the
$Q$-graded vector space $\Phi(B):=\oplus_{q\in Q}B_q\subset B$,
obtained by retaining only the homogeneous components with degree in
$Q\subset G$. A left $N$-action is defined for any homogeneous 
vector $v_q\in\Phi(B)_q$ by
$$n.v_q:= n\rightharpoonup v_q \leftharpoonup (q^{-1}n^{-1}q).$$
Moreover,
$$n.v_q=n\rightharpoonup v_q \leftharpoonup (q^{-1}n^{-1}q)\;\in
\Phi(B)_{nq(q^{-1}n^{-1}q)}=\Phi(B)_q\,\,,$$
since $\rightharpoonup,\leftharpoonup$ are morphisms in
$\text{vect}_G$. Thus the $N$-action preserves the $Q$-grading;
we conclude that $\Phi(B)$ is an object in $\lMod{r_\cA(H)}{}$.
      
On the morphism spaces, the functor $\Phi$ acts by restriction 
to the vector subspace $\Phi(B)\subset B$. 
We show that this gives a bijection on morphisms: Suppose $\Phi(f)=0$, then
$f(v_q)=0$ for all $v_q$ with grade $q\in Q$. For an arbitrary $v_g\in
B$ with grade $g\in G$, we may write $g=nq$ with $n\cdot q\in N\rtimes
Q$ and get an element $n^{-1}\rightharpoonup v_g$ of 
degree $q$. Using that 
$f$ is a morphism of $\C[N]$-bimodules, we find  
$f(v_g)=n\rightharpoonup f(n^{-1}\rightharpoonup v_g)= 0$. 
Thus $\Phi$ is injective on
morphisms. To show surjectivity, we take
a morphism
$f_\Phi: \Phi(B)\rightarrow \Phi(C)$;
writing again $g=nq$, we define a linear map
$f:B\rightarrow C$ on $v_g\in V_g$ by $f(v_g):=n\rightharpoonup f_\Phi(n^{-1}\rightharpoonup v_g)$. This linear map is,  by construction, a morphism 
of left $\C[N]$-modules in $\text{vect}_G$. It remains to verify
that $f$  is also a morphism of right
$\C[N]$-modules. We note that for $g\in G$, the decomposition 
$g=nq$ with $n\in N$ and $q\in Q$ implies 
$gm=(nqmq^{-1})q$ with $nqmq^{-1}\in N$ for all $m\in N$.
We thus find:

\begin{align*}
f(v_g\leftharpoonup m)
 &=(nqmq^{-1})\rightharpoonup f_\Phi((nqmq^{-1})^{-1}\rightharpoonup v_g
\leftharpoonup m)\\
 &=(nqmq^{-1})\rightharpoonup
f_\Phi((qm^{-1}q^{-1})\rightharpoonup(n^{-1}\rightharpoonup v_g)
\leftharpoonup m)\\
 &=(nqmq^{-1})\rightharpoonup
f_\Phi((qm^{-1}q^{-1}).(n^{-1}\rightharpoonup v_g))\\
&=(nqmq^{-1})\rightharpoonup
(qm^{-1}q^{-1}).f_\Phi((n^{-1}\rightharpoonup v_g))\\
&=(nqmq^{-1})\rightharpoonup (qm^{-1}q^{-1})\rightharpoonup
f_\Phi(n^{-1}\rightharpoonup v_g)\leftharpoonup m\\
&=n\rightharpoonup f_\Phi(n^{-1}\rightharpoonup v_g)\leftharpoonup m\\
&=f(v_g)\leftharpoonup m
\end{align*}
In the forth identity, we used 
that $f_\Phi$ is $r_\cA(H)$-linear.

Next we show that $\Phi$ has a natural structure of a monoidal functor. Recall that the
      tensor product $V\otimes W$ in $\text{vect}_G$ (resp. $\text{vect}_Q$) is
      defined as the tensor product of vector spaces with diagonal grading
      $V_g\otimes W_h\subset (V\otimes W)_{gh}$. Furthermore, the tensor
      product in $\Bimod{\C[N]}{}$ is defined by $\otimes_{\C[N]}$. On the other
      side, the tensor product $\otimes$ in $\lMod{r_\cA(H)}{}$ is the tensor product
      of modules over the Hopf algebra $r_\cA(H)=\C[N]\rtimes \C^Q$ with
      diagonal grading and action
      \[n.(\Phi(B)_{q_1} \otimes \Phi(B)_{q_2})=(n.\Phi(B)_{q_1})\otimes((qnq^{-1}).\Phi(B)_{q_2}).\]
      We now show that the canonical projection of vector spaces
      $B\otimes C\to B\otimes_{\C[N]}C$ gives rise to a monoidal     
      structure on $\Phi$:
      $$\Phi_2:\Phi(B)\otimes \Phi(C)\rightarrow \Phi(B\otimes_{\C[N]}C).$$
      It is clear that this map is compatible with the $Q$-grading. The
      compatibility with the $N$-action is calculated as follows: for $n\in
      N,\;b\in B_{q_1}\;c\in C_{q_2}$:
      \begin{align*}
        \left(n.(b\otimes c)\right)
        &=n.b\otimes (q_1^{-1} n q_1).c\\
        &\stackrel{\phi}{\longmapsto} (n.b)\otimes_{\C[N]}(q_1^{-1} n q_1).c\\
        &=(n\rightharpoonup b \leftharpoonup (q_1^{-1}n^{-1}q_1))
          \otimes_{\C[N]} ((q_1^{-1} n q_1)\rightharpoonup 
          c \leftharpoonup (q_2^{-1}q_1^{-1} n q_1q_2))\\
        &=(n\rightharpoonup b \leftharpoonup (q_1^{-1}n^{-1}q_1q_1^{-1} n q_1))
          \otimes_{\C[N]} (c \leftharpoonup (q_2^{-1}q_1^{-1} n q_1q_2))\\
        &=(n\rightharpoonup b)
          \otimes_{\C[N]} (c \leftharpoonup ((q_1q_2)^{-1} n (q_1q_2))\\
        &=n.\left(b\otimes_{\C[N]}c\right).
      \end{align*}
Moreover, $\Phi_2$ is clearly compatible with
the associativity constraint.
We now
show $\Phi_2$ is bijective by giving an explicit inverse: Consider an element
$v\otimes w\in B\otimes_{\C[N]}C$ which is in 
$\Phi(B\otimes_{\C[N]}C)\subset B\otimes_{\C[N]}C$.
Restricting to homogeneous elements, we take 
$v\otimes w\in(B\otimes_{\C[N]}C)$ with
$v$ of degree $g\in G$ and $w$ of degree $h\in G$.
Since $v\otimes w$ is even in the subspace $\Phi(B\otimes_{\C[N]}C)$,
we have $q:=gh\in Q$. Writing $h=n'q'$ with $n'\in N$ and
$q'\in Q$, have in the tensor product over $\C[N]$ the identity
$v\otimes
w=(v\leftharpoonup n')\otimes ((n')^{-1}\rightharpoonup w)$ with tensor
factors both graded in $Q$, hence in $\Phi(B)\otimes\Phi(C)$. We may now
define the inverse $\Phi_2^{-1}(v\otimes w):=(v\leftharpoonup n)\otimes
(n^{-1}\rightharpoonup w)$, which is a left- and right-inverse of $\Phi_2$.
Finally the monoidal units in the categories are $\C_1,\;1\in Q$ resp.
$\C[N]$; then $N\cap Q=\{1\}$ implies that there is an obvious  
isomorphism $\C_1\cong \Phi(\C[N])$. Hence
$\Phi$ is a monoidal functor.

To verify that $\Phi$ indeed defines an equivalence of tensor
categories, it remains to construct for 
each $V\in \lMod{r_\cA(H)}{}$ an object     
$D\in\Bimod{\C[N]}{\text{vect}_G}$ such that $\Phi(D)\cong V$.

The following construction could be understood as an induced
corepresentation via the cotensor product, but we prefer to keep the
calculation explicit: For $V=\bigoplus V_q$ consider the vector space
      $$D:=\bigoplus_{q\in Q} \C[N]\otimes V_q.$$
Since $G=NQ$, the vector space $D$ is naturally endowed with a $G$-grading.
Left multiplication on $\C[N]$ gives a natural
left $N$-action $\rightharpoonup$
via left-multi\-pli\-cation on $\C[N]$, which is clearly a morphism in
$\text{vect}_G$. We define a right $N$-action on
$D$ by
$$(n\otimes v_q)\leftharpoonup m\;:=\;
      n(qmq^{-1})\otimes (qn^{-1}q^{-1}).v_q.$$
Since the left action preserves the $Q$-grading, 
the vector $(n\otimes v_q)\leftharpoonup m$ has degree
$n(qmq^{-1})q=(nq)m$; thus also the right action 
$\leftharpoonup$ is a morphism in $\text{vect}_G$.\\

We finally verify that $\Phi(D)\cong V$: the homogeneous components
of $D$ with degree in the subgroup $Q$ only
are spanned by elements $1\otimes v_q$, hence we can 
identify $\Phi(D)$ with $V$. We check that the $N$-action 
defined on $\Phi(D)$ coincides with the one on $V$ we started with:
      \begin{align*}
        n.(1\otimes v_q)
        &=n\rightharpoonup (1\otimes v_q) \leftharpoonup (q^{-1}n^{-1}q)\\
        &=n(q(q^{-1}n^{-1}q)q^{-1})\otimes (q(q^{-1}n^{-1}q)^{-1}q^{-1}).v_q\\
        &=1\otimes n.v_q.
      \end{align*}
\end{proof}

\subsection{The Taft algebra}\label{sec:ExampleTaftAlgebra}

Fix a natural number $d$ and
let $\zeta \in \C$ be a primitive $d$-th root of unity. 
We consider the Taft algebra $T_\zeta$ which is a complex
Hopf algebra. As an algebra, $T_\zeta$ is generated by 
two elements $g$ and $x$ modulo the relations
\[
g^d=1, \,\, x^d=0 \quad \text{and} \quad gx = \zeta xg.
\]
A coassociative comultiplication on $T_\zeta$ is defined
by the unique algebra homomorphism 
$\Delta : T_\zeta \to T_\zeta \otimes T_\zeta$ with
\[
\Delta(g) = g \otimes g \quad \text{and} \quad \Delta(x) = g \otimes x + x \otimes 1.
\]
\begin{lemma}
Let $\zeta$ and $\xi$ be primitive $d$-th roots of unity.
If there exists an isomorphism $\psi : T_{\zeta} \rightarrow T_{\xi}$ 
of Hopf algebras, then $\zeta = \xi$.
\end{lemma}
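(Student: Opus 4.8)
The plan is to exploit that a Hopf algebra isomorphism must preserve the group of group-like elements and the spaces of skew-primitive elements; the root of unity $\zeta$ will then be recovered from the commutation relation $gx=\zeta xg$.

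First I would record that $\psi$ restricts to an isomorphism of the groups of group-like elements $G(T_\zeta)\to G(T_\xi)$. In both algebras this group is the cyclic group of order $d$ generated by $g$, so $\psi(g)=g^{k}$ for some $k$ with $\gcd(k,d)=1$. Since $x$ is $(g,1)$-skew-primitive, i.e. $\Delta(x)=g\otimes x+x\otimes 1$, and $\psi$ is a coalgebra map with $\psi(1)=1$, the element $\psi(x)$ is $(g^{k},1)$-skew-primitive in $T_\xi$. Moreover $\psi(x)$ is \emph{non-trivial}, meaning $\psi(x)\notin\C\,(g^{k}-1)$: indeed $x$ and $g-1$ are linearly independent in $T_\zeta$, so by injectivity of $\psi$ we get $\psi(x)\notin\C\,(\psi(g)-1)=\C\,(g^{k}-1)$.

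The technical heart of the argument is then to compute the space $P_{g^{k},1}$ of $(g^{k},1)$-skew-primitives of $T_\xi$. Here I would use the $\Z_{\ge0}$-grading of $T_\xi$ in which $\deg g=0$ and $\deg x=1$; with respect to it $T_\xi$ is a graded coalgebra, so the skew-primitive equation $\Delta(y)=g^{k}\otimes y+y\otimes 1$ decouples degree by degree. In degree $0$ a direct comparison of coefficients in the group algebra shows the only solutions are the multiples of $g^{k}-1$. In degree $n\ge 1$, using $\Delta(x^{n})=\sum_{l=0}^{n}\binom{n}{l}_\xi\,g^{l}x^{\,n-l}\otimes x^{l}$ (the $\xi$-quantum binomial theorem, valid since $g\otimes x$ and $x\otimes 1$ satisfy $(g\otimes x)(x\otimes 1)=\xi\,(x\otimes 1)(g\otimes x)$), the bidegree-$(n,0)$ part of the equation forces $y_n\in\C\,x^{n}$, the vanishing of the middle bidegrees forces $\binom{n}{l}_\xi=0$ for $0<l<n$, and the bidegree-$(0,n)$ part forces $g^{n}=g^{k}$. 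Since $\xi$ is a primitive $d$-th root of unity, the quantum integer $[m]_\xi$ is non-zero for $0<m<d$, so the Gaussian binomials $\binom{n}{l}_\xi$ with $0<l<n<d$ are all non-zero; hence no non-trivial skew-primitive survives in degrees $2\le n\le d-1$, and in degree $1$ one exists precisely when $k\equiv 1\pmod d$. I expect this skew-primitive computation to be the main obstacle, since it requires the quantum binomial identity together with the non-vanishing of the quantum integers below $d$.

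Combining these, the non-triviality of $\psi(x)$ forces $k\equiv 1\pmod d$, so $\psi(g)=g$ and $\psi(x)=\alpha x+\beta(g-1)$ with $\alpha\neq0$. Finally I would apply $\psi$ to the relation $gx=\zeta xg$. Using $\psi(g)=g$, $\psi(x)=\alpha x+\beta(g-1)$ and the relation $gx=\xi xg$ in $T_\xi$, expanding both sides and comparing the coefficient of $xg$ yields $\alpha\xi=\alpha\zeta$; since $\alpha\neq0$ this gives $\zeta=\xi$, as claimed.
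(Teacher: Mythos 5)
Your proof is correct, but it follows a genuinely different route from the paper's. The paper works almost entirely on the algebra side: after using preservation of group-likes to write $\psi(g)=g^c$, it observes that $y=\psi(x)$ is an eigenvector of $\ad_{\psi(g)}$ with eigenvalue $\zeta$, hence lies in a single stratum $\langle x^n g^m \mid 0\le m<d\rangle_\C$ of the eigenbasis $\{x^ng^m\}$, and then compares nilpotency orders: $y^k=0$ as soon as $kn\ge d$, while $\psi(x)$ must have nilpotency index exactly $d$, forcing $n=1$. You instead exploit the coalgebra structure in full, classifying the $(g^k,1)$-skew-primitives of $T_\xi$ via the grading and the $q$-binomial theorem (nonzero only in degrees $0$ and $1$, the latter occurring only for $k\equiv 1 \bmod d$, using $\binom{n}{l}_\xi\neq 0$ for $0<l<n<d$); this pins down $\psi(g)=g$ and $\psi(x)=\alpha x+\beta(g-1)$ with $\alpha\neq 0$, after which the relation $gx=\zeta xg$ yields $\zeta=\xi$ by comparing the coefficient of $xg$. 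Your version is longer but buys more: it determines all Hopf isomorphisms $T_\zeta\to T_\xi$ (hence the automorphism group), and it explicitly excludes $\psi(g)=g^c$ with $c\not\equiv 1$ --- a point the paper's terse argument glosses over, since $\ad_{g^c}$ acts on the stratum $\langle xg^m\rangle_\C$ with eigenvalue $\xi^c$, so the eigenvalue-plus-nilpotency argument by itself only gives $\zeta=\xi^c$; as $g\mapsto g^c$, $x\mapsto x$ is an algebra (though not coalgebra) isomorphism $T_{\xi^c}\to T_\xi$, some coalgebra input beyond group-likes, exactly of the kind your skew-primitive computation supplies, is genuinely needed to rule out $c\neq 1$. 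The paper's route, in exchange, is shorter and avoids quantum binomial identities; the only caveat in yours is the harmless convention issue of whether $\binom{n}{l}_\xi$ or $\binom{n}{l}_{\xi^{-1}}$ appears in $\Delta(x^n)$, which does not affect the non-vanishing you need.
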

\begin{proof}
The set $\{ x^ng^m \mid 0 \leq n,m <d \}$ is a $\C$-basis of 
$T_{\xi}$ consisting of eigenvectors for the automorphisms 
\begin{align*}
\ad_h :T_{\xi} &\to T_{\xi} , \quad
a \mapsto hah^{-1},
\end{align*}
with $h = g^c$ for $c \in \{1,2, \ldots, N-1\}$.

Suppose that $\psi : T_{\zeta} \to T_{\xi}$ is a Hopf algebra 
isomorphism. 
Then the image $h := \psi(g)$ of the generator $g$ of $T_\zeta$
is equal to $g^c \in T_{\xi}$ for some $c \in \{1,2, \ldots, d-1\}$. 
The generator $x$ of $T_\zeta$  is mapped by the algebra
homomorphism $\psi$  to an eigenvector $y:=\psi(x)$
of $\ad_h$ to the eigenvalue $\zeta$:
\begin{align*}
hyh^{-1}=\psi(gxg^{-1})=\zeta y.
\end{align*}
Since $\xi$ is a primitive root of unit, we find $0<n<d$ such that
$\zeta = \xi^n$.
Thus $y$ is an element of the $\C$-linear subspace $\langle x^ng^m \mid 0 \leq m < d\rangle_\C$ of $T_\xi$.
This implies that $y^k = 0$ for $k$ the smallest number such that $kn \geq d$. Since $\psi$ is an isomorphism, $n$ has to be $1$ and hence $\zeta = \xi$.
\end{proof}

Denote by $A$ the Hopf subalgebra of $T_\zeta$ generated by $g$.
We will deduce from Proposition \ref{prop:5.5} that the partial dual of $T_\zeta$ with respect to $A$ is isomorphic to $T_\zeta$. Hence we 
have to enlarge the class of complex Hopf algebras beyond
Taft algebras to get a non-trivial example.

Let $N$ be a natural number and $d$ a divisor of $N$.
Now let $\zeta$ be a primitive $d$-th root of unity and $q$ a primitive $N$-th root of unity. 
Let $c+N\Z$ be the unique residue class such that $\zeta = q^c$.
Define $\hat{T}_{\zeta,q}$ as the $\C$-algebra
\[
\hat{T}_{\zeta,q} := \langle x,g \mid g^N=1, x^d=0, gx = \zeta xg\rangle
\]
and define $\check{T}_{\zeta,q}$ as the $\C$-algebra
\[
\check{T}_{\zeta,q} := \langle x,g \mid g^N = 1, x^d = 0, gx = qxg\rangle.
\]
Both algebras are finite-dimensional of dimension $Nd$.

One checks the following
\begin{lemma}
Let $\hat{T}_{\zeta,q}$ and $\check{T}_{\zeta,q}$ be the algebras from above.
The unique algebra homomorphisms 
$\hat\Delta: \hat{T}_{\zeta,q}\to \hat{T}_{\zeta,q}\otimes \hat{T}_{\zeta,q}$
and $\check\Delta: \check{T}_{\zeta,q}\to \check{T}_{\zeta,q}\otimes 
\check{T}_{\zeta,q}$ defined on the generators by
\begin{align*}
\hat{\Delta}(g) & := g \otimes g & \hat{\Delta}(x) & := g \otimes x + x \otimes 1 \\
\check{\Delta}(g) & := g \otimes g & \check{\Delta}(x) & := g^c \otimes x + x \otimes 1 
\end{align*}
give the structure of an coassociative counital Hopf algebra on
$\hat{T}_{\zeta,q}$ and $\check{T}_{\zeta,q}$, respectively.

Furthermore, we have exact sequences of Hopf algebras, with $k := \frac{N}{d}$
\begin{align*}
\xymatrix{
\C[\Z_k] \ar[r] & \hat{T}_{\zeta,q} \ar[r] & T_\zeta
}
\\
\xymatrix{
T_\zeta \ar[r] & \check{T}_{\zeta,q} \ar[r] & \C[\Z_k]
}
.
\end{align*}
\end{lemma}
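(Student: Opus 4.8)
The plan is to treat the two assertions in turn: first that $\hat\Delta$ and $\check\Delta$ define Hopf algebra structures, and then that the two displayed sequences are exact. For the bialgebra structure I would argue uniformly, writing the coproduct of the skew-primitive generator as $\Delta(x)=a\otimes x+x\otimes 1$ with $a=g$ for $\hat{T}_{\zeta,q}$ and $a=g^c$ for $\check{T}_{\zeta,q}$; in both cases one checks that $ax=\zeta xa$. The first point is that $\Delta$ is a well-defined algebra homomorphism, i.e.\ that it respects the three defining relations. The relation $g^N=1$ is immediate since $g$ is grouplike, and the commutation relation follows by expanding $\Delta(g)\Delta(x)$ and reusing the relation. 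The only relation needing a genuine argument is $x^d=0$: setting $u:=a\otimes x$ and $v:=x\otimes 1$ in the tensor square of the algebra, one computes $uv=\zeta vu$, so the quantum binomial theorem applies. Since $\zeta$ is a \emph{primitive} $d$-th root of unity, the $\zeta$-binomial coefficients $\binom{d}{i}_\zeta$ vanish for $0<i<d$, whence $\Delta(x)^d=u^d+v^d=(a^d\otimes x^d)+(x^d\otimes 1)=0$. This vanishing is the only non-formal step and is exactly where the primitivity of $\zeta$ is used.

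Once $\Delta$ is available, coassociativity and counitality (with $\eps(g)=1$ and $\eps(x)=0$) need only be verified on the two algebra generators $g,x$, since all relevant maps are algebra homomorphisms. I would then write down the antipode explicitly as $S(g)=g^{N-1}$ and $S(x)=-a^{-1}x$, check that it respects the defining relations and is therefore a well-defined algebra anti-endomorphism, and confirm the two antipode identities on $g$ and $x$; finite-dimensionality makes $S$ automatically bijective. This completes the verification that both objects are Hopf algebras.

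For the exact sequences I would exhibit the maps concretely and reduce exactness to a dimension count, using $\dim\hat{T}_{\zeta,q}=\dim\check{T}_{\zeta,q}=Nd$, $\dim T_\zeta=d^2$ and $\dim\C[\Z_k]=k=N/d$, so that $Nd=k\,d^2$. In the first sequence the inclusion sends a generator of $\Z_k$ to $g^d$; because $\zeta^d=1$ the element $g^d$ is central and grouplike of order $k$, giving an injective Hopf map onto a central, hence normal, Hopf subalgebra, while the surjection onto $T_\zeta$ is the quotient by the Hopf ideal $(g^d-1)$, which returns precisely the Taft relations. In the second sequence the inclusion sends the order-$d$ grouplike generator of $T_\zeta$ to $g^c$ and $x$ to $x$; here one first records that $\gcd(N,c)=k$ (forced by $\zeta=q^c$ having order $d$), so that $g^c$ has order exactly $d$ and $g^cx=\zeta xg^c$, whence $\langle g^c,x\rangle$ is a faithful copy of $T_\zeta$ of dimension $d^2$. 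The surjection onto $\C[\Z_k]$ sends $x\mapsto 0$ and identifies $g^c$ with $1$; I would check directly that $(x,\,g^c-1)$ is a Hopf ideal and that the resulting group is $\Z/\gcd(N,c)=\Z_k$.

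In both cases the composite of the two maps is trivial, the left map is injective and the right map surjective, and the image of the left map is normal; the numerical identity $Nd=k\,d^2$ together with Nichols--Zoeller freeness then forces this image to coincide with the coinvariants of the projection, giving exactness. I expect the main obstacle to be the bookkeeping around the exponent $c$ in the second sequence, namely confirming that $g^c$ really has order $d$ and that it generates an honest copy of $T_\zeta$ rather than a proper quotient; but this is entirely controlled by the relation $\gcd(N,c)=N/d$ coming from $\zeta=q^c$.
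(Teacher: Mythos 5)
Your proof is correct: the paper states this lemma without proof (``One checks the following''), and your verification supplies exactly the expected details, treating both coproducts uniformly via the element $a$ with $ax=\zeta xa$. In particular you correctly isolate the only non-formal points: that $\Delta(x)^d=0$ follows from the $\zeta$-binomial theorem because $a\otimes x$ and $x\otimes 1$ $\zeta$-commute with $\zeta$ a \emph{primitive} $d$-th root of unity (which is precisely why $\check{\Delta}$ must use $g^c$ rather than $g$, since $g\otimes x$ and $x\otimes 1$ would only $q$-commute in $\check{T}_{\zeta,q}$); that $\zeta=q^c$ forces $\gcd(N,c)=N/d=k$, so $g^c$ has order exactly $d$ and $\langle g^c,x\rangle$ is a faithful $d^2$-dimensional copy of $T_\zeta$ rather than a proper quotient; and that the Nichols--Zoeller dimension count $Nd=k\,d^2$, combined with the inclusion of the (central, resp.\ normal) Hopf subalgebra into the coinvariants of the projection, legitimately upgrades each complex $\C[\Z_k]\to\hat{T}_{\zeta,q}\to T_\zeta$ and $T_\zeta\to\check{T}_{\zeta,q}\to\C[\Z_k]$ to an exact sequence of Hopf algebras.
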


The Hopf subalgebra $A \subset \hat{T}_{\zeta,q}$ 
generated by the grouplike element $g$ and the Hopf subalgebra 
$B \subset \check{T}_{\zeta,q}$ generated by $g$ are both isomorphic to the 
complex group Hopf algebra $\C[\Z_N]$. To apply a partial
dualization, we need a Hopf pairing; it is given by the following
lemma whose proof we leave to the reader:

\begin{lemma}\label{5.4}
Let $q$ be an $N$-th primitive root of unity and let $g \in \C[\Z_N]$ be a generator of the cyclic group $\Z_N$.
\begin{enumerate}
\item
The bilinear form $\omega : \C[\Z_N] \times \C[\Z_N] \to \Bbbk$ given by $\omega(g^n, g^m) = q^{nm}$ is a Hopf pairing.
\item
The linear map $\omega' : \Bbbk \rightarrow \C[\Z_N] \otimes \C[\Z_N]$ 
with 
\[
\omega'(1_\Bbbk)=
\frac{1}{N} \sum_{k,\ell=1}^{N} q^{-k\ell} g^k \otimes g^\ell
\]
is the inverse copairing of $\omega$.
\end{enumerate}
\end{lemma}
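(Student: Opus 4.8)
The plan is to verify both claims by direct computation on the group-like basis $\{g^j\}$ of $\C[\Z_N]$, reducing everything to elementary identities among powers of $q$ and, for the second part, to the orthogonality relation for $N$-th roots of unity.

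For part (1), I would check the four defining axioms of a Hopf pairing on basis elements. Since every $g^m$ is group-like, one has $\Delta(g^m) = g^m \otimes g^m$ and $\eps(g^m) = 1$, so the first axiom $\omega \circ (\mu_A \otimes \id_B) = \omega \circ (\id_A \otimes \omega \otimes \id_B) \circ (\id_{A \otimes A} \otimes \Delta_B)$ becomes the exponential law $q^{(n+n')m} = q^{nm} q^{n'm}$. The third axiom is identical after exchanging the two tensor factors, using that $\omega(g^n, g^m) = q^{nm} = q^{mn} = \omega(g^m, g^n)$ is symmetric. The counit axioms $\omega \circ (\eta_A \otimes \id_B) = \eps_B$ and $\omega \circ (\id_A \otimes \eta_B) = \eps_A$ reduce to $q^{0} = 1$. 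No genuine obstacle arises here: the content is simply that for each fixed $n$ the assignment $g^m \mapsto q^{nm}$ is a character of $\Z_N$.

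For part (2), I would verify the two identities of the non-degeneracy definition. Applying $(\omega \otimes \id_A) \circ (\id_A \otimes \omega')$ to a basis vector $g^n$ yields
\[
\frac{1}{N} \sum_{k,\ell} q^{-k\ell}\, \omega(g^n, g^k)\, g^\ell = \frac{1}{N} \sum_{\ell} \Bigl( \sum_{k} q^{k(n-\ell)} \Bigr) g^\ell .
\]
The inner sum is the crux: $\sum_{k=1}^N q^{k(n-\ell)}$ equals $N$ when $N \mid (n-\ell)$ and vanishes otherwise, so only the term with $\ell \equiv n \pmod N$ survives and the whole expression collapses to $g^n$, as required. The second identity $(\id_B \otimes \omega) \circ (\omega' \otimes \id_B) = \id_B$ follows in exactly the same way, with the orthogonality relation applied to the remaining summation variable.

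The only step carrying any real content is this orthogonality relation, which holds precisely because $q$ is a \emph{primitive} $N$-th root of unity: then $q^j = 1$ if and only if $N \mid j$, and the geometric sum vanishes off the diagonal. I therefore expect the proof to be entirely routine, the only points demanding care being the bookkeeping of exponents modulo $N$ and the normalization factor $1/N$.
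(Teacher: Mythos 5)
Your proof is correct: the paper explicitly leaves this verification to the reader, and your computation --- checking the four Hopf pairing axioms on the group-like basis (where they reduce to the exponential law and $q^0=1$) and establishing the copairing identities via the orthogonality relation $\sum_{k=1}^N q^{k(n-\ell)} = N\,\delta_{N \mid (n-\ell)}$, which uses primitivity of $q$ --- is exactly the intended routine argument. The two defining identities of non-degeneracy are precisely what characterize the inverse copairing (uniqueness being noted in the paper's remark on Hopf copairings), so your verification is complete.
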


The partial dual of $\hat{T}_{\zeta,q}$ with 
respect to $A$ and $\omega$ is isomorphic to $\check{T}_{\zeta,q}$:

\begin{proposition}\label{prop:5.5}
Let $N$ be a natural number and $d$ be a divisor of $N$.
Let $\zeta$ be a primitive $d$-th root of unity and $q$ a primitive $N$-th root of unity with $q^c=\zeta$.
Let $A \subset \hat{T}_{\zeta,q}$ and $B \subset \check{T}_{\zeta,q}$ be as above 
and $\omega : A \otimes B \rightarrow \Bbbk$ the non-degenerate Hopf pairing from Lemma \ref{5.4}.
\begin{enumerate}
\item
The algebra homomorphism $\pi : \hat{T}_{\zeta,q} \rightarrow A$ which sends $g$ to $g$ and $x$ to $0$ is a Hopf algebra projection.
\item
The partial dualization of $\hat{T}_{\zeta,q}$ with respect to the partial dualization datum $(\hat{T}_{\zeta,q}\xrightarrow{\pi} A, B, \omega)$ 
is isomorphic to $\check{T}_{\zeta,q}$.
\end{enumerate}
\end{proposition}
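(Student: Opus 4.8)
The plan is to verify (1) by a direct computation and then to run the four-step recipe of Section~\ref{sec:PartialDualization} explicitly, tracking the single generator $x$ through the equivalence $\Omega$. For (1), the map $\pi$ is a well-defined algebra homomorphism since it sends the defining relations to valid ones in $A=\C[\Z_N]$: $\pi(g)^N=1$, $\pi(x)^d=0$ and $\pi(g)\pi(x)=0=\zeta\,\pi(x)\pi(g)$. It is a bialgebra map because $(\pi\otimes\pi)\hat\Delta(g)=g\otimes g=\Delta_A(\pi(g))$ and $(\pi\otimes\pi)\hat\Delta(x)=g\otimes0+0\otimes1=0=\Delta_A(\pi(x))$, and the counits visibly agree; compatibility with the antipode is then automatic by Remark~\ref{rem:Antipode_as_homomorphism}. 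Since the inclusion $\iota:A\hookrightarrow\hat T_{\zeta,q}$ of the subalgebra generated by $g$ satisfies $\pi\circ\iota=\id_A$, the morphism $\pi$ is a Hopf algebra projection.

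For (2), I first compute the coinvariants $K=\hat T_{\zeta,q}^{\,\mathrm{coin}(\pi)}$ using Remark~\ref{rem:projection_thm}. The $\zeta$-binomial expansion $\hat\Delta(x^n)=\sum_{k}\binom{n}{k}_\zeta\,g^kx^{n-k}\otimes x^k$ gives $(\id\otimes\pi)\hat\Delta(x^n)=x^n\otimes1$, so every $x^n$ is coinvariant; as $\dim K=\dim\hat T_{\zeta,q}/\dim A=d$, we obtain $K=\langle1,x,\dots,x^{d-1}\rangle\cong\Bbbk[x]/(x^d)$. Its $A$-Yetter-Drinfel'd structure is read off from the same remark: the coaction is $\delta_K(x)=\pi(g)\otimes x+\pi(x)\otimes1=g\otimes x$, the adjoint action is $g.x=gxg^{-1}=\zeta x=q^{c}x$, and $\Delta_K(x)=1\otimes x+x\otimes1$, so $x$ is primitive in $K$.

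Next I apply the braided equivalence $\Omega=\Omega^\omega$ of Theorem~\ref{thm:composedDuality} to get $L=\Omega(K)\in\YD{B}(\cC)$, evaluating the two diagrams that define the $B$-action and $B$-coaction of $\Omega(X)$ after Theorem~\ref{thm:composedDuality}. The pairing turns the $A$-coaction of degree $g$ into the $B$-action $g.x=\omega(g,g)\,x=q\,x$, and it turns the $A$-action eigenvalue $q^{c}$ into the $B$-coaction $\delta_L(x)=g^{c}\otimes x$, the exponent $c$ being forced by $\omega(g,g^{c})=q^{c}=\zeta$. The monoidal constraint $\Omega_2(x\otimes x)=(c^{\YD{}}_{K,K}\circ c^{-1}_{K,K})(x\otimes x)=\zeta\,x\otimes x$ merely rescales the multiplication, so $L\cong\Bbbk[x]/(x^d)$ as an algebra and, being trivial on factors involving the unit, leaves $x$ primitive in $L$. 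Forming the Radford biproduct $r_\cA(\hat T_{\zeta,q})=L\rtimes B$ and reading off its presentation from Definition~\ref{d:RadfordBiproduct} and the explicit biproduct formulas following it, the smash product yields $gx=(g.x)g=q\,xg$ together with $g^N=1$ and $x^d=0$, while the biproduct coproduct gives $\Delta(g)=g\otimes g$ and $\Delta(x)=\delta_L(x)+x\otimes1=g^{c}\otimes x+x\otimes1$. These are precisely the relations and comultiplication of $\check T_{\zeta,q}$, so $x\mapsto x$, $g\mapsto g$ is the desired Hopf algebra isomorphism.

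The $\zeta$-binomial bookkeeping and the bosonization formulas are routine; the real obstacle is the careful evaluation of the defining diagrams for $\Omega$, where the antipodes $S^{\pm1}$ of $B$ and the choice between $\omega$ and its copairing $\omega'$ decide whether one lands on $q$ or $q^{-1}$ and on $g^{c}$ or $g^{-c}$. One must pin these conventions down so that the output is $\check T_{\zeta,q}$ exactly rather than its variant with $q^{-1}$; the involutivity statement of Corollary~\ref{corr:involutive}, which must return $\hat T_{\zeta,q}$ upon dualizing back along $\omega^-$, serves as a useful consistency check on these signs and exponents.
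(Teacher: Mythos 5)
Your proposal is correct and follows essentially the same route as the paper's proof: you compute the coinvariants $K=\langle 1,x,\dots,x^{d-1}\rangle_\C$ with its $\YD{A}$-structure via Remark~\ref{rem:projection_thm}, transport along $\Omega^\omega$ using the pairing $\omega$ and the copairing $\omega'$ (obtaining the $B$-action eigenvalue $q$, the coaction $g^c\otimes x$, and the rescaled multiplication $x\otimes x\mapsto \zeta x^2$ coming from $\Omega_2$), and then bosonize and compare presentations with $\check T_{\zeta,q}$. The one step to state fully explicitly is the last: the assignment $g\mapsto g$, $x\mapsto x$ is an isomorphism because the verified relations yield a surjection between algebras of the same dimension $Nd$ --- precisely the dimension count the paper invokes, and which your computation $\dim L=d$, $\dim B=N$ already supplies.
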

In particular, for $N=d$, we have $\hat T_{\zeta,q}=\check T_{\zeta,q}$.

\begin{proof}
The space of coinvariants $K := \hat{T}_{\zeta,q}^{\;{\rm coin}(\pi)} = \{a \in \hat{T}_{\zeta,q} \mid \hat{\Delta}(a) = a \otimes 1\}$ 
equals the $\C$-linear span of $\{1,x,x^2,\ldots,x^{d-1}\}$.
Remark \ref{rem:projection_thm} implies that $K$ is a Yetter-Drinfel'd module with
$A$-action $\rho : A \otimes K \rightarrow K$ and $A$-coaction $\delta : K \rightarrow A \otimes K$ given by
\begin{align*}
\rho &: g \otimes x \mapsto gxg^{-1} = \zeta x,\\
\delta &: x \mapsto \pi(g) \otimes x = g \otimes x = x_{(-1)} \otimes x_{(0)}.
\end{align*} 
Moreover, $K$ has the structure of a Hopf algebra in $\YD{A}$
with multiplication and comultiplication given by
\begin{align*}
\mu &: x \otimes x \mapsto x^2,\\
\Delta &: x \mapsto 1 \otimes x + x \otimes 1\,\,.
\end{align*}
The dualization functor $(\Omega, \Omega_2)$ from Section
\ref{sec:HopfPairings} for the
Hopf pairing $\omega : A \otimes A \rightarrow \Bbbk$
yields the 
$B$-Yetter-Drinfel'd module $L = \langle 1,x,x^2,\ldots, x^{d-1}\rangle_\C$ with action $\rho' : B \otimes L \rightarrow L$ and coaction $\delta' : L \rightarrow B \otimes L$ given by
\begin{align*}
\rho' &: g \otimes x \mapsto \omega(x_{(-1)},g)x_{(0)} = qx,\\
\delta' &: x \mapsto \frac{1}{N} \sum_{k,\ell = 1}^N q^{-k \cdot \ell} g^k \otimes \rho(g^\ell \otimes x) =  \frac{1}{N} \sum_{k,\ell = 1}^N (q^{-k}q^c)^{\ell} g^k \otimes x = g^c \otimes x.
\end{align*}
The Yetter-Drinfel'd module $L$ has a natural structure of a Hopf 
algebra in $\YD{B}$ with multiplication $\mu' = \mu \circ \Omega_2(K,K)$ and comultiplication $\Delta' = \Omega_2^{-1}(K,K)\circ \Delta$
\begin{align*}
\mu' &: x \otimes x \mapsto \zeta x^2,\\
\Delta' &: x \mapsto 1 \otimes x + x \otimes 1\,\,.
\end{align*}

As an algebra, $L$ is generated by $x$, so the biproduct $r_\cA(\hat{T}_{\zeta,q}) = L \rtimes B$ is generated by $x\cong x \otimes 1$ 
and $g\cong1 \otimes g$. In the biproduct $r_\cA(\hat{T}_{\zeta,q})$,
the relations
\[
g^N=1, x^d = 0 \text{ and } gx = \rho'(g \otimes x) g = qxg
\]
hold. This gives a surjective algebra homomorphism 
$\psi : r_\cA(\hat{T}_{\zeta,q}) \rightarrow \check{T}_{\zeta,q}$; 
since $\check{T}_{\zeta,q}$ and $r_\cA(\hat{T}_{\zeta,q})$  
have the same complex dimension, $\psi$ is an isomorphism.

The map $\psi$ also respects the coalgebra structures, since 
\[
\Delta_{r_\cA(\hat{T}_{\zeta,q})}(x) = 1\cdot x_{[-1]} \otimes x_{[0]} + x \otimes 1 = g^c \otimes x + x \otimes 1 \,\,.
\]
\end{proof}

\subsection{Reflection on simple roots in a Nichols algebra}
\label{sec:ExampleNicholsAlgebra}
We finally discuss the example of Nichols algebras
\cite{HS13}. 
We take for $\cC$  the category of finite-dimensional 
Yetter-Drinfel'd modules over a complex Hopf algebra $h$,
e.g.\ the complex group algebra of a finite group $G$. Let $M\in\cC$
be a finite direct sum  of simple objects $(M_i)_{i\in I}$,
$$M=\bigoplus_{i\in I} M_i\,\, . $$
Thus,  $M$ is a complex braided vector space.
The Nichols algebra $\cB(M)$ of $M$ is defined as a quotient
by the kernels of the quantum symmetrizer maps $Q_n$
$$\cB(M):=\bigoplus_{n\geq 0} M^{\otimes n}/\ker(Q_n).$$
The Nichols algebra $\cB(M)$ is a Hopf algebra  in the braided category $\cC$.
If $M$ is a direct sum of $n$ simple objects in $\cC$, the Nichols 
algebra is said to be of rank $n$.

Each simple subobject $M_i$ of $M$ provides a 
partial dualization datum:
Denote by $M_i^*$ the braided vector space
dual to  $M_i$. Denote by $\cB(M_i)$ the Nichols
algebra for $M_i$. The fact that $M_i$ is a subobject and a quotient
of $M$ implies that $\cB(M_i)$
is a Hopf subalgebra of $\cB(M)$ and that there is
a natural projection $\cB(M)\xrightarrow{\pi_i}\cB(M_i)$ of
Hopf algebras. Similarly,
the evaluation and coevaluation for $M$ induce
a non-degenerate Hopf pairing $\omega_i:\cB(M_i)\otimes\cB(M_i^*)\to \C$
on the Nichols algebras. We thus have for each
$i\in I$ a partial dualization datum
  $$\cA_i:=(\cB(M)\xrightarrow{\pi_i}\cB(M_i),\cB(M_i^*),\omega_i)$$ 
We denote by  $r_i(\cB(M)):=r_{\cA_i}(\cB(M))$ the partial 
dualization of $\cB(M)$ with respect to $\cA_i$.
As usual, we denote by
$K_i$ the coinvariants for the the projection $\pi_i$; 
$K_i$ is a Hopf algebra in the braided category of
$\cB(M_i)$-Yetter-Drinfel'd modules.

We  summarize some results of \cite{AHS10},\cite{HS10a} and 
\cite{HS13}; for simplicity, we assume that the Nichols $\cB(M)$
algebra is finite-dimensional. To make contact with our results, we note that
the $i$-th partial dualization 
$$r_i(\cB(M)):=\Omega(K_i)\rtimes \cB(M_i^*) \,\, , $$
as introduced
in the present paper, coincides by with the $i$-th reflection  
of $\cB(M)$ in the terminology of \cite{AHS10}.

\begin{theorem}
Let $h$ be a complex Hopf algebra. Let $M_i$ be a finite
collection of simple $h$-Yetter-Drinfel'd modules. Consider 
$M:=\bigoplus_{i=1}^n M_i\in\YD{h}$ and assume that the associated
Nichols algebra $H:=\cB(M)$ is finite-dimensional. 
Then the  following assertions hold:
\begin{itemize}
\item By construction, the Nichols algebras 
$\cB(M),r_i(\cB(M))$ have the same dimension as complex vector spaces.
\item
For $i\in I$, denote by $\hat M_i$ the braided subspace
$$  \hat M_i=M_1\oplus \ldots\oplus M_{i-1}\oplus M_{i+1}\oplus\ldots \,\, .$$
of $M$.
Denote by $\ad_{\cB(M_i)}(\hat M_i)$ the braided vector space
obtained as the image of $\hat M_i\subset\cB(M)$ under the adjoint action
of the Hopf subalgebra $\cB(M_i)\subset\cB(M)$. Then,
there is a unique isomorphism \cite[Prop. 8.6]{HS13}  of Hopf
algebras in the braided category 
$\YD{\cB(M_i)}\left(\YD{h}\right)$: 
$$K_i\cong \cB(\ad_{\cB(M_{i})}(\hat M_i))$$
which is the identity on $\ad_{\cB(M_i)}(\hat M_i)$.

\item  Define, with the usual convention for the
sign, \\
$a_{ij}:=-\max\{m\;|\;\ad_{M_i}^m(M_j)\neq0\}$.
Fix $i\in I$ and denote for $j\neq i$
$$V_j:=\ad_{M_i}^{-a_{ij}}\left({M_j}\right)\subset\cB(M)\,\,.$$
The braided vector space 
$$R_i(M)=V_1\oplus\cdots M_i^*\cdots \oplus V_n
\in\YD{h}$$ 
is called the the $i$-th reflection of the braided vector
space $M$.
Then there is a unique isomorphism \cite[Thm.\ 8.9]{HS13}
of Hopf algebras in $\YD{h}$
$$r_i(\cB(M_1\oplus\cdots \oplus M_n))\cong \cB(V_1\oplus\cdots
M_i^*\cdots \oplus V_n)$$
which is the identity on $M$.

\item With the same definition for $a_{ij}$ for $i\neq j$
and $a_{ii}:=2$, the matrix $(a_{ij})_{i,j=1,\ldots n}$ is a
generalized Cartan matrix \cite[Thm. 3.12]{AHS10}. 
Moreover, one has $r_i^2(\cB(M))\cong \cB(M)$,
as a special instance of Corollary \ref{corr:involutive},
and the Cartan matrices coincide, $a_{ij}^M=a_{ij}^{r_i(M)}$. 
In the terminology of \cite[Thm.\ 6.10]{HS10a}, one
obtains  a Cartan scheme. 
\item The maps $r_i$ give rise to a Weyl groupoid which controls the
structure of the Nichols algebra $\cB(M)$. 
For details, we refer to \cite[Sect.\ 3.5]{AHS10}
and \cite[Sect.\ 5]{HS10a}.
\end{itemize}
\end{theorem}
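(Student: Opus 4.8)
The plan is to establish the five assertions by feeding the main construction of this paper---the partial dualization $r_i(\cB(M)) = \Omega(K_i) \rtimes \cB(M_i^*)$---into the reflection theory of Nichols algebras developed in \cite{HS13,AHS10,HS10a}. The conceptual hinge is the identification, recorded just before the theorem, of $r_i(\cB(M))$ with the $i$-th reflection of $\cB(M)$. Once this identification is secured, most assertions reduce to invoking the corresponding statements in the references, so the genuine work lies in reconciling the abstract functor $\Omega = \Omega^{\omega_i}$ with the combinatorially defined reflection at the level of braided Hopf algebras in $\cC = \YD{h}$.

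First I would dispatch the dimension statement. The functor $\Omega : \YD{\cB(M_i)}(\cC) \to \YD{\cB(M_i^*)}(\cC)$ from Corollary \ref{cor:pairing_functor} acts as the identity on underlying objects, altering only action and coaction, so $\dim \Omega(K_i) = \dim K_i$ as complex vector spaces. Since $\cB(M_i)$ and $\cB(M_i^*)$ are Nichols algebras of dual braided vector spaces, they have equal dimension, and a Radford biproduct $L \rtimes B$ has underlying object $L \otimes B$. Hence $\dim r_i(\cB(M)) = \dim K_i \cdot \dim \cB(M_i^*) = \dim K_i \cdot \dim \cB(M_i) = \dim \cB(M)$. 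The second assertion, the identification of the coinvariants $K_i = \cB(M)^{{\rm coin}(\pi_i)}$ with the Nichols algebra $\cB(\ad_{\cB(M_i)}(\hat M_i))$, is precisely \cite[Prop.\ 8.6]{HS13}, which I would invoke after matching our coinvariant Hopf algebra with the one constructed there.

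For the third assertion I would apply the braided monoidal equivalence $\Omega$ to the isomorphism $K_i \cong \cB(\ad_{\cB(M_i)}(\hat M_i))$ from the previous step. Because $\Omega$ preserves the universal property defining Nichols algebras (the non-trivial structure $\Omega_2$ modifies the product, as in the Taft computation where $\mu' = \mu \circ \Omega_2$, but the Nichols property survives), we get $\Omega(\cB(V)) \cong \cB(\Omega(V))$ for the generating braided subspace $V = \ad_{\cB(M_i)}(\hat M_i)$. The remaining and decisive task is to identify $\Omega(V)$ with the reflected braided vector space $R_i(M) = V_1 \oplus \cdots M_i^* \cdots \oplus V_n$, which is the content of \cite[Thm.\ 8.9]{HS13}: using the diagrammatic formulas for the $\Omega$-action and $\Omega$-coaction in terms of $\omega$ and $\omega'$, one checks that $\Omega$ carries the top adjoint component $\ad_{M_i}^{-a_{ij}}(M_j)$ to $V_j$ and the $i$-th slot to $M_i^*$. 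The bosonization $\Omega(K_i) \rtimes \cB(M_i^*)$ then becomes $\cB(R_i(M))$, with the isomorphism the identity on $M$ by construction.

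The final two assertions are then formal. That $(a_{ij})$ is a generalized Cartan matrix is \cite[Thm.\ 3.12]{AHS10}; the relation $r_i^2(\cB(M)) \cong \cB(M)$ is the special case of Corollary \ref{corr:involutive} applied to the self-dual datum with the two pairings $\omega^\pm$, and together with the stability $a_{ij}^M = a_{ij}^{r_i(M)}$ this assembles the reflections into a Cartan scheme in the sense of \cite[Thm.\ 6.10]{HS10a}, from which the Weyl groupoid of the last assertion is read off via \cite[Sect.\ 3.5]{AHS10} and \cite[Sect.\ 5]{HS10a}. The main obstacle throughout is the third step: verifying that the categorically defined $\Omega$, built from the Hopf pairing and the side-switch functors, reproduces \emph{on the nose} the combinatorial reflection $R_i$, including the exact powers $-a_{ij}$ of the adjoint action. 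This is where the explicit graphical description of $\Omega$ must be matched against the root-string computations underlying \cite{HS13}.
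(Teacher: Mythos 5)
Your proposal matches the paper's own treatment: the theorem is presented there purely as a summary of results from \cite{AHS10}, \cite{HS10a} and \cite{HS13}, with the only paper-specific ingredients being the identification of $r_i(\cB(M))=\Omega(K_i)\rtimes\cB(M_i^*)$ with the $i$-th reflection and the appeal to Corollary \ref{corr:involutive} for $r_i^2(\cB(M))\cong\cB(M)$ --- and you invoke exactly these same references and the same corollary, deferring the decisive matching of the categorical $\Omega$ with the combinatorial reflection to \cite[Thm.\ 8.9]{HS13} just as the paper does. Your explicit count $\dim r_i(\cB(M))=\dim K_i\cdot\dim\cB(M_i^*)=\dim K_i\cdot\dim\cB(M_i)=\dim\cB(M)$ is correct and simply spells out what the paper's phrase ``by construction'' leaves implicit.
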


We finally give examples that illustrate the appearance 
of Nichols algebras as Borel algebras in quantum groups.
We end with an example in which a reflected Nichols
algebra is not isomorphic to  the original Nichols
algebra.

The first example serves to fix notation:

\begin{example}\label{ex:5.7}
Let $n>1$ be a natural number and  $q$ be a primitive $n$-th root 
of unity in $\C$. Let $M$ be the one-dimensional 
complex braided vector space with 
basis $x_1$ and braiding matrix $q_{11}=q$. As a quotient of the
tensor algebra, the associated Nichols algebra $\cB(M)$ inherits a 
grading, 
$\cB(M)=\oplus_{k\in\mathbb N}\cB(M)_{(k)}$.
As a graded vector space, it is isomorphic to
$$\cB(M)\cong \C[x_1]/(x_1^n)$$
and thus of complex dimension $n$. The Hilbert series 
is
$$\mathcal{H}(t):=\sum_{k\geq 1} t^k\dim\left(\cB(M)_{(k)}\right)
    =1+t+\cdots t^{n-1}.$$
  \end{example}

The next example exhibits the role of Nichols algebras 
as quantum Borel parts.

\begin{example}
Let $\fg$ be a complex finite-dimensional semisimple Lie 
algebra of rank $n$ with Cartan matrix $(a_{ij})_{i,j=1\ldots n}$. 
Let $(\alpha_i)_{i=1,\ldots,n}$ 
be a set of simple roots for $\fg$ and let
$d_i:=\langle\alpha_i,\alpha_i\rangle/2$.

We construct a braided vector space $M$ with diagonal
braiding as a Yetter-Drinfel'd module over an abelian group:
fix a root $q\neq 1$ of unity; find a diagonal braiding
matrix $q_{ij}$ with
    $$q_{ii}=q^{d_i} \qquad q_{ij}q_{ji}=q_{ii}^{a_{ij}}.$$
The associated Nichols algebra $\cB(M)$ is then the quantum 
Borel part of the Frobenius-Lusztig kernel $u_q(\fg)$.
In this case, all Nichols algebras $r_i(\cB(M))$ obtained by
reflections are isomorphic. The isomorphisms give rise to the 
Lusztig automorphisms $T_{s_i}$ of the algebra $u_q(\fg)$ 
for the simple root $\alpha_i$. These automorphisms 
enter e.g.\ in the
construction of a PBW-basis  for $U(\fg)$.
\end{example}

In the following example  \cite{Heck09}, the two Nichols algebras 
describe two possible Borel
parts of the Lie superalgebra $\fg=\mathfrak{sl}(2|1)$; 
they also appear  in the description \cite{ST13} of 
logarithmic conformal field theories. In this example, 
non-isomorphic Nichols algebras are related by reflections.

\begin{example}
Let $q\neq \pm 1$ be a primitive $n$-th root of unity. Find
two two-dimensional diagonally braided vector spaces
$M,N$, with bases $(x^{(M)}_1,x^{(M)}_2)$ $(x^{(N)}_1,x^{(N)}_2)$
respectively, such that
    \begin{align*}
      q^{(M)}_{11}=q^{(M)}_{22}=-1 \qquad
      & q^{(M)}_{12}q^{(M)}_{21}=q^{-1} \\
      q^{(N)}_{11}=-1\quad q^{(N)}_{22}=q \qquad 
      & q^{(N)}_{12}q^{(N)}_{21}=q^{-1}.
    \end{align*}

We describe a PBW-basis of the Nichols algebras $\cB(M)$ and 
$\cB(N)$ by  isomorphisms of graded vector spaces to
symmetric algebras. To this end, denote for a basis element
$x_i^{(M)}$ of $M$ the corresponding
Nichols subalgebra by $\cB(x_i^{(M)})$, and similarly for
$N$. (We will drop superscripts from now on, wherever they
are evident.) A PBW-basis for the Nichols algebra $\cB(x_i^{(M)})$
has been discussed in Example \ref{ex:5.7}.
Moreover, we need the shorthand
$x_{12}:=x_1x_2-q_{12}x_2x_1$. One can show that the 
multiplication in the Nichols algebras leads to isomorphisms
of graded vector spaces: 
    \begin{align*}
      \cB(M)
      &\stackrel\sim\leftarrow \cB(x_1)\otimes \cB(x_2)\otimes 
      \cB(x_1x_2-q_{12}x_2x_1)\\
      & \cong \C[x_1]/(x_1^2)\otimes \C[x_2]/(x_2^2) \otimes
        \C[x_{12}]/(x_{12}^n),\\
      \cB(N)
      &\stackrel\sim\leftarrow \cB(x_1)\otimes \cB(x_2)\otimes 
      \cB(x_1x_2-q_{12}x_2x_1)\\
      & \cong\C[x_1]/(x_1^2)\otimes \C[x_2]/(x_2^n) \otimes
        \C[x_{12}]/(x_{12}^2).
    \end{align*}
Both Nichols algebras $\cB(M)$ and $\cB(N)$ are of dimension
$4n$ and have a Cartan matrix of type $A_2$.
Their Hilbert series can be read off from the
PBW-basis:     
\begin{align*}
     \mathcal{H}_{\cB(M)}(t)
     &=(1+t)(1+t)(1+t^2+t^4\cdots t^{2(n-1)}),\\
     \mathcal{H}_{\cB(N)}(t)
     &=(1+t)(1+t+t^2+\cdots t^{n-1})(1+t^2).\\
    \end{align*}
The two Hilbert series are different; thus the two Nichols algebras 
$\cB(M)$ and $\cB(N)$ are {\em not} isomorphic. The Nichols
algebras are, however, related by
partial dualizations:
$$r_1(\cB(M))=\cB(N)\qquad r_2(\cB(M))\cong \cB(N)$$
$$r_1(\cB(N))=\cB(M)\qquad r_2(\cB(N))=\cB(N)\,\,$$
where $r_i$ is the partial dualization with respect to the
subalgebra $\cB(\C x_i)$.
For the isomorphism indicated by $\cong$, the generators
$x_1$ and $x_2$ have to be interchanged.
  
  \end{example}

\noindent{\sc Acknowledgments}: 
We are grateful to Yorck Sommerh\"auser for many helpful
discussions.
The authors are partially supported by the DFG Priority 
Programme SPP 1388 ``Representation Theory''.

\end{document}